\newcommand{\m}{\odot} 
\newcommand{\Av}{\text{Av}} 
\newcommand{\U}[1]{{\mathcal U}_{#1}} 
\newcommand{\ma}[1]{\widehat{#1}} 
\newcommand{\MAv}{\text{MAv}} 
\newcommand{\cro}{\text{cr}} 
\newcommand{\wei}{\text{w}} 
\newcommand{\lo}{\ell}
\newcommand{\lend}[1]{\text{left}(#1)}
\newcommand{\rend}[1]{\text{right}(#1)}
\newcommand{\cA}{A}
\newcommand{\cB}{B}
\newcommand{\cC}{C}
\newcommand{\bbN}{{\mathbb N}}
\newcommand{\bbR}{{\mathbb R}}
\newcommand{\cS}{{\mathcal S}} 
\newcommand{\dint}[1]{\overrightarrow{G_{#1}}}
\newcommand{\uint}[1]{G_{#1}}
\newcommand{\J}{\delta} 
\newcommand{\mexp}[2]{{#2}^{*#1}}
\newtheorem{lemma}{Lemma}[section]
\newtheorem{theorem}[lemma]{Theorem}
\newtheorem{observation}[lemma]{Observation}                              
\newtheorem{proposition}[lemma]{Proposition}
\newtheorem{claim}{Claim}
\newtheorem{corollary}[lemma]{Corollary}
\newtheorem{fact}[lemma]{Fact}
\theoremstyle{definition}
\newtheorem{definition}[lemma]{Definition}
\newtheorem{problem}{Problem}
\title{Splittings and Ramsey Properties of Permutation Classes\footnote{
This paper is intended exclusively for arXiv and will not be published
elsewhere. Parts of it, however, may be published separately.}
}
\author{Vít Jelínek\thanks{
Computer Science Institute, Charles University in Prague,
\texttt{jelinek@iuuk.mff.cuni.cz}} 
\and Pavel Valtr\thanks{
Department of Applied Mathematics, Faculty of Mathematics and Physics, Charles
University in Prague, \texttt{valtr@kam.mff.cuni.cz}
}
}
\begin{document}
\maketitle
\begin{abstract}
We say that a permutation $\pi$ is \emph{merged} from permutations $\rho$ and
$\tau$, if we can color the elements of $\pi$ red and blue so that
the red elements are order-isomorphic to $\rho$ and the blue ones to~$\tau$. 
A \emph{permutation class} is a set of permutations closed under taking
subpermutations. A permutation class $\cC$ is \emph{splittable} if it has two
proper subclasses $\cA$ and $\cB$ such that every element of $\cC$ can be
obtained by merging an element of $\cA$ with an element of~$\cB$.

Several recent papers use splittability as a tool in deriving enumerative
results for specific permutation classes. The goal of this paper is to study
splittability systematically. As our main results, we show that if $\sigma$ is
a sum-decomposable permutation of order at least four, then the class
$\Av(\sigma)$ of all $\sigma$-avoiding permutations is splittable, while if
$\sigma$ is a simple permutation, then $\Av(\sigma)$ is unsplittable.

We also show that there is a close connection between splittings of certain
permutation classes and colorings of circle graphs of bounded clique size.
Indeed, our splittability results can be interpreted as a generalization of a
theorem of Gyárfás stating that circle graphs of bounded clique size have
bounded chromatic number.
\end{abstract}

\section{Introduction}

The study of pattern-avoiding permutations, and more generally, of hereditary
permutation classes, is one of the main topics in combinatorics. However,
despite considerable effort, many basic questions remain unanswered. For
instance, for permutations that avoid the pattern 1324, we still have no
useful structural characterization, and no precise asymptotic enumeration
either. 

Recently, Claesson, Jelínek and Steingrímsson~\cite{cjs} have shown that every
permutation $\pi$ that avoids 1324 can be merged from a permutation avoiding
132 and a permutation avoiding 213; in other words, the elements of $\pi$ can
be colored red and blue so that there is no red copy of 132 and no blue copy of
213. From this, they deduced that there are at most $16^n$ 1324-avoiding
permutations of order~$n$. They have extended this merging argument to more
general patterns, showing in particular that if $\sigma$ is a layered pattern
of size $k$, then there are at most $(2k)^{2n}$ $\sigma$-avoiding permutations
of size~$n$. Subsequently, this approach was further developed by
B\'ona~\cite{bona-1324,bona-newlay}, who proved, among other results, that there
are at most $(7+4\sqrt{3})^n\simeq 13.93^n$ 1324-avoiders of size~$n$. These
results are also based on arguments showing that avoiders of certain patterns
can be merged from avoiders of smaller patterns.

Motivated by these results, we address the general problem of identifying when
a permutation class $\cC$ has proper subclasses $\cA$ and $\cB$, such that every
element of $\cC$ can be obtained by merging an element of $\cA$ with an element
of~$\cB$. We call a class $\cC$ with this property \emph{splittable}. In this
paper, we mostly focus on classes defined by avoidance of a single forbidden
pattern, although some of our results are applicable to general hereditary
classes as well.

On the negative side, we show that if $\sigma$ is a simple permutation, then the
class $\Av(\sigma)$ of all $\sigma$-avoiding permutations is unsplittable. More
generally, every wreath-closed permutation class is unsplittable. We also find
examples of unsplittable classes that are not wreath-closed, e.g., the class of
layered permutations or the class of 132-avoiding permutations.

On the positive side, we show that if $\sigma$ is a direct sum of two nonempty
permutations and has size at least four, then $\Av(\sigma)$ is splittable. This
extends previous results of Claesson et al.~\cite{cjs}, who address the
situation when $\sigma$ is a direct sum of three permutations, with an extra
assumption on one of the three summands.

The concept of splittability is closely related to several other structural 
properties of classes of relational structures, which have been previously
studied in the area of Ramsey theory. We shall briefly mention some of these
connections in Subsection~\ref{ssec-ramsey}. 

We will also establish a less direct, but perhaps more useful, connection 
between splittability and coloring of circle graphs. Let $\sigma_k$ be the 
permutation $1k(k-1)\dotsb32$ of order~$k+1$. We will show, as a special 
case of more general results, that all $\sigma_k$-avoiding permutations can be
merged from a bounded number, say $f(k)$, of $132$-avoiding permutations.
Moreover, we prove that the smallest such $f(k)$ is equal to the smallest number
of colors needed to properly color every circle graph with no clique of 
size~$k$. This allows us to turn previous results on circle
graphs~\cite{gyar,koslow,koskra} into results on splittability of
$\sigma_k$-avoiding permutations, and to subsequently extend these results to
more general patterns. We deal with this topic in Subsection~\ref{ssec-circ}.

\subsection{Basic notions}\label{ssec-basics}

\subsubsection*{Permutation containment}
A \emph{permutation} of order $n\ge 1$ is a sequence $\pi$ of $n$ distinct
numbers from the
set $[n]=\{1,2,\dotsc,n\}$. We let $\pi(i)$ denote the $i$-th element of~$\pi$.
We often represent a permutation $\pi$ by a \emph{permutation diagram},
which is a set of $n$ points with Cartesian coordinates $(i,\pi(i))$, for
$i=1,\dotsc,n$. The set of permutations of order~$n$ is denoted by $S_n$. When
writing out short permutations explicitly, we omit all punctuation and write,
e.g., 1324 for the permutation 1,~3,~2,~4.

The \emph{complement} of a permutation $\pi\in S_n$ is the permutation
$\sigma\in S_n$ satisfying $\sigma(i)=n-\pi(i)+1$. For a permutation $\pi\in
S_n$ and two indices $i,j\in[n]$, we say that the element $\pi(i)$
\emph{covers} $\pi(j)$ if $i<j$ and $\pi(i)<\pi(j)$. If an element $\pi(i)$ is
not covered by any other element of $\pi$, i.e., if $\pi(i)$ is the smallest
element of $\pi(1),\pi(2),\dotsc,\pi(i)$, we say that $\pi(i)$ is a
\emph{left-to-right minimum}, or just \emph{LR-minimum} of~$\pi$. If $\pi(i)$
is not an LR-minimum, we say that $\pi(i)$ is a \emph{covered element} of~$\pi$.

Given two permutations $\sigma\in S_m$ and $\pi\in S_n$, we say that $\pi$
\emph{contains} $\sigma$ if there is an $m$-tuple of indices $1\le
i_1<i_2<\dotsb<i_m\le n$, such that the sequence $\pi(i_1),\dotsc,\pi(i_m)$ is 
\emph{order-isomorphic} to $\sigma$, i.e., if for every $j,k\in[m]$ we have 
$\sigma(j)<\sigma(k)\iff \pi(i_j)<\pi(i_k)$. We then say that $\pi(i_1),\dotsc,
\pi(i_m)$ is an \emph{occurrence} of $\sigma$ in $\pi$, and the function
$f\colon[m]\to[n]$ defined by $f(j)=i_j$ is an \emph{embedding} of $\sigma$ 
into~$\pi$. A permutation that does not contain $\sigma$ is
\emph{$\sigma$-avoiding}. We let $\Av(\sigma)$ denote the set of all
$\sigma$-avoiding permutations, and for a set $F$ of permutations, we let
$\Av(F)$ denote the set of permutations that avoid all elements of~$F$.

A set $\cC$ of permutations is \emph{hereditary} if for every $\pi\in \cC$ all
the permutations contained in $\pi$ belong to $\cC$ as well. We use the term
\emph{permutation class} to refer to a hereditary set of permutations. It is
not hard to see that a set $\cC$ of permutations is hereditary if and only if
there is a (possibly infinite) set $F$ such that $\cC=\Av(F)$.
A \emph{principal} permutation class is a class of the form $\Av(\pi)$ for some
permutation~$\pi$.

\subsubsection*{Direct sums and inflations}
The \emph{direct sum} $\sigma\oplus\pi$ of two permutations $\sigma\in S_m$ and
$\pi\in S_n$ is the permutation $\sigma(1),\sigma(2),\dotsc,\sigma(m),\pi(1)+m,
\pi(2)+m,\dotsc,\pi(n)+m\in S_{n+m}$. Similarly, the \emph{skew sum}
$\sigma\ominus\pi$ is the permutation
$\sigma(1)+n,\sigma(2)+n,\dotsc,\sigma(m)+n, \pi(1),\pi(2),\dotsc,\pi(n)$ (see
Figure~\ref{fig-suma}.
A permutation is \emph{decomposable} if it is a direct sum of two nonempty
permutations. 

\begin{figure}
\hfil\includegraphics[width=0.7\textwidth]{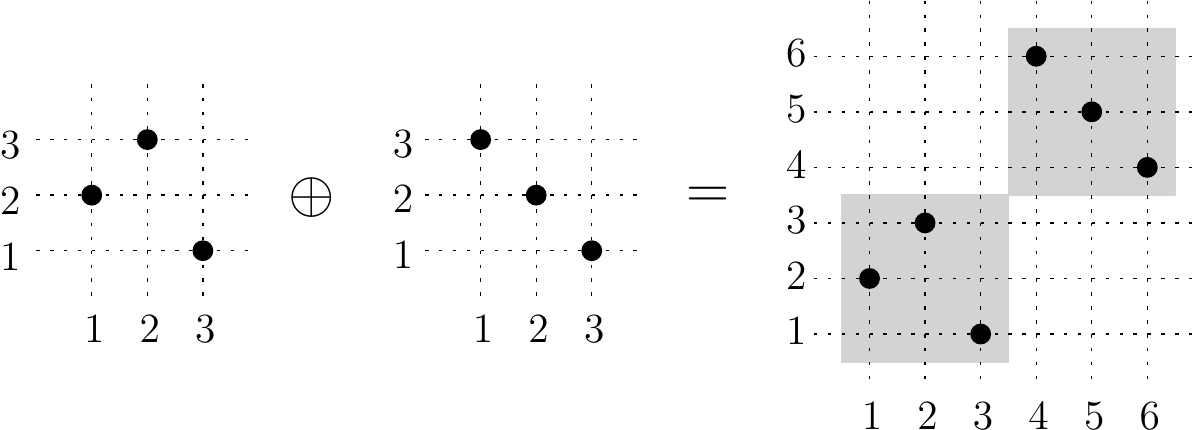}
 \caption{An example of a direct sum: $231\oplus321=231654$}\label{fig-suma}
\end{figure}

Suppose that $\pi\in S_n$ is a permutation, let $\sigma_1,\dotsc,\sigma_n$
be an $n$-tuple of nonempty permutations, and let $m_i$ be the order of
$\sigma_i$ for~$i\in[n]$. \emph{The inflation} of $\pi$ by the sequence
$\sigma_1,\dotsc,\sigma_n$, denoted by $\pi[\sigma_1,\dotsc,\sigma_n]$, is the
permutation of order $m_1+\dotsb+m_n$ obtained by concatenating $n$ sequences
$\overline{\sigma}_1\overline{\sigma}_2\dotsb\overline{\sigma}_n$ with these
properties (see Figure~\ref{fig-naf}):
\begin{itemize}
 \item for each $i\in[n]$, $\overline{\sigma}_i$ is order-isomorphic to
$\sigma_i$, and
\item for each $i,j\in[n]$, if $\pi(i)<\pi(j)$, then all the elements of
$\overline{\sigma}_i$ are smaller than all the elements
of~$\overline{\sigma}_j$.
\end{itemize}

\begin{figure}
\hfil\includegraphics[width=0.7\textwidth]{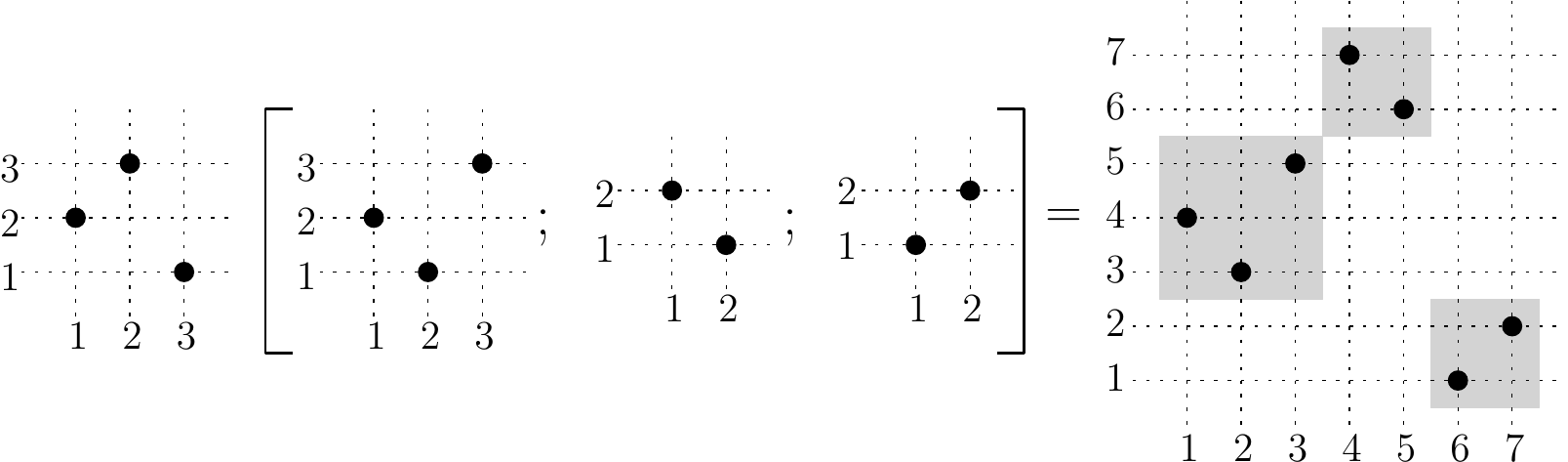}
 \caption{An example of inflation: $231[213,21,12]=4357612$}\label{fig-naf}
\end{figure}

We say that a permutation $\pi$ is \emph{simple} if it cannot be obtained
by an inflation, except for the trivial inflations $\pi[1,1,\dotsc,1]$ and
$1[\pi]$. Albert and Atkinson~\cite[Proposition 2]{alat_sim} have pointed out
that for every permutation $\rho$ there is a unique simple permutation $\pi$
such that $\rho$ may be obtained by inflating $\pi$, and moreover, if $\pi$ 
is neither 12 nor 21, then the inflation is determined uniquely. 

For two sets $\cA$ and $\cB$ of permutations, \emph{the wreath product} of $\cA$
and $\cB$, denoted by $\cA\wr\cB$, is the set of all the permutations that may
be obtained by inflating an element of $\cA$ by a sequence of elements of~$\cB$.
If $\cA$ or $\cB$ is a singleton set $\{\rho\}$, we just write $\rho\wr
\cB$ or $\cA\wr \rho$, respectively. Note that if $\cA$ and $\cB$ are
hereditary, then so is~$\cA\wr\cB$.

For a set $X$ of permutations, we say that a set of permutations $Y$ is 
\emph{closed under $\wr X$} if $Y\wr X \subseteq Y$, and $Y$ is \emph{closed 
under $X\wr$} if $X\wr Y\subseteq Y$. Note that $Y$ is closed under $12\wr$
if and only if it is closed under taking direct sums, and it is closed under
$21\wr$ if and only if it is closed under skew sums. We say that a set of
permutations $Y$ is \emph{wreath-closed} if $Y\wr Y\subseteq Y$. A principal
permutation class $\Av(\pi)$ is wreath-closed if and only if $\pi$ is a simple
permutation, and more generally, a permutation class is wreath-closed if and
only if it is equal to $\Av(F)$ for a set $F$ that only contains simple
permutations~\cite[Proposition 1]{alat_sim}. Simple permutations, inflations and
wreath products are crucial concepts in understanding the structure of
permutation classes, as demonstrated, e.g., by the work of Brignall~\cite{brig}
or Brignall, Huczynska and Vatter~\cite{bhv_sim}.

\subsubsection*{Merging and splitting}
Recall that a permutation $\pi$ is \emph{merged} from permutations $\rho$ and
$\tau$, if it can be partitioned into two disjoint subsequences, one of which
is an occurrence of $\rho$ and the other is an occurrence of~$\tau$.
For two permutation classes $A$ and $B$, we write $A\m B$ for the class of 
all the permutations that can be obtained by merging a (possibly empty)
permutation from $A$ with a (possibly empty) permutation from~$B$. Clearly, $A\m
B$ is again a permutation class.

We are interested in finding out which permutation classes $C$ can be merged
from finitely many proper subclasses. We say that a multiset
$\{P_1,\dotsc,P_m\}$ of permutation classes forms a \emph{splitting} of a
permutation class $C$ if $C\subseteq P_1\m P_2\m\dotsb\m P_m$. The classes $P_i$
are the \emph{parts} of the splitting. The splitting is \emph{nontrivial} if
none of its parts is a superset of $C$, and the splitting is \emph{irredundant}
if no proper submultiset of  $\{P_1,\dotsc,P_m\}$ forms a splitting of~$C$. We
say that a class of permutations is \emph{splittable} if it has a nontrivial
splitting. To familiarize ourselves with this key notion of the paper, we
now provide several equivalent definitions of splittability.

\begin{lemma}\label{lem-simple} For a class $C$ of permutations, the following
properties are equivalent:
\begin{itemize}
 \item[(a)] $C$ is splittable.
 \item[(b)] $C$ has a nontrivial splitting into two parts. 
 \item[(c)] $C$ has a splitting into two parts, in which
each part is a proper subclass of~$C$.
 \item[(d)] $C$ has a nontrivial splitting into two parts, in which
each part is a principal class.
\end{itemize}
\end{lemma}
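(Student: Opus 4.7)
My plan is to close the cycle $(\mathrm{d})\Rightarrow(\mathrm{b})\Rightarrow(\mathrm{a})\Rightarrow(\mathrm{c})\Rightarrow(\mathrm{d})$. The first two arrows are immediate from the definitions: a nontrivial splitting into two principal classes is in particular a nontrivial two-part splitting, which is in particular a nontrivial splitting. The bulk of the work lies in the two remaining implications.

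For $(\mathrm{a})\Rightarrow(\mathrm{c})$, starting from a nontrivial splitting $\{P_1,\dotsc,P_m\}$ of $C$, I would first replace each $P_i$ by $P_i\cap C$. This preserves the splitting: if $\pi\in C\subseteq P_1\m\dotsb\m P_m$ is obtained by merging pieces $p_i\in P_i$, then each $p_i$ is a sub-permutation of $\pi$ and hence lies in $C$ because $C$ is hereditary, so $p_i\in P_i\cap C$. The nontriviality condition $C\not\subseteq P_i$ becomes $P_i\cap C\subsetneq C$, so every new part is a proper subclass of $C$. To reduce the number of parts to two, I would induct on $m\ge 2$. For $m\ge 3$, set $Q=(P_1\m\dotsb\m P_{m-1})\cap C$; this is again a permutation class, and the same sub-permutation argument gives $C\subseteq Q\m P_m$. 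If $Q\subsetneq C$ we have a two-part splitting into proper subclasses; otherwise $Q=C$ yields $C\subseteq P_1\m\dotsb\m P_{m-1}$, a splitting into $m-1$ proper subclasses to which the inductive hypothesis applies.

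For $(\mathrm{c})\Rightarrow(\mathrm{d})$, suppose $C\subseteq A\m B$ with $A,B\subsetneq C$, and pick $\alpha\in C\setminus A$ together with $\beta\in C\setminus B$. Because $A$ is hereditary, any $\pi\in A$ containing $\alpha$ would force $\alpha\in A$, a contradiction; hence $A\subseteq\Av(\alpha)$, and analogously $B\subseteq\Av(\beta)$. It follows that $C\subseteq\Av(\alpha)\m\Av(\beta)$, and this splitting is nontrivial because $\alpha,\beta\in C$ while $\alpha\notin\Av(\alpha)$ and $\beta\notin\Av(\beta)$. The whole argument is essentially bookkeeping and no single step is a serious obstacle; the most delicate point is the telescoping reduction in $(\mathrm{a})\Rightarrow(\mathrm{c})$, which in the end rests on the single observation that whenever a merge realises a permutation in $C$, each of its components is itself a sub-permutation of that permutation and therefore lies in $C$.
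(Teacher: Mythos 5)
Your proof is correct and follows essentially the same route as the paper: both rest on the observations that intersecting the parts with $C$ (using heredity of $C$) preserves a splitting while making every part a proper subclass, and that a hereditary class omitting a permutation $\alpha$ is contained in $\Av(\alpha)$. The only real difference is in the reduction to two parts: where you use a telescoping induction with a case split on whether $Q=C$, the paper first passes to an irredundant splitting and groups all but one part into a single merge, with irredundance guaranteeing that the grouped part is not a superset of $C$.
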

\begin{proof}
Suppose that $C$ is splittable, and let $\{P_1,P_2, \dotsc,P_k\}$ be a
nontrivial irredundant splitting of~$C$. Define a class $Q=P_2\m\dotsb\m
P_k$. By irredundance, the class $C$ is not a subset of $Q$, and therefore
$\{P_1,Q\}$ is a nontrivial splitting of $C$ into two parts. Moreover,
$\{P_1\cap C,Q\cap C\}$ is a splitting into two parts, with each part being a
proper subclass of~$C$. This shows that (a) implies (b) and~(c). 

To prove that (b) implies (d), suppose that $\{P_1,P_2\}$ is a nontrivial
splitting of $C$ into two parts. Choose arbitrary permutations $\sigma\in
C\setminus P_1$ and $\pi\in C\setminus P_2$. Then $\Av(\sigma)$ is a superset of
$P_1$ but not a superset of $C$, and similarly for $\Av(\pi)$ and $P_2$. Thus
$\{\Av(\sigma), \Av(\pi)\}$ is a splitting of~$C$ that witnesses property~(d).
Since both (c) and (d) clearly imply (a), the statements (a) through (d)
are equivalent. 
\end{proof}

We remark that in general, a splittable class need not have a splitting
that simultaneously satisfies properties (c) and (d) of Lemma~\ref{lem-simple}.
For instance, we will later see that the class $\Av(1342)$ is splittable,
but it cannot be split into its principal proper subclasses, not even if we
consider splittings with an arbitrary finite number of parts.

For a multiset $\cS$ and an integer $k$, let $\mexp{k}{\cS}$ denote the
multiset
obtained by increasing $k$-times the multiplicity of each element of~$\cS$.

\subsubsection*{Specific permutation classes}
We will occasionally refer to several specific permutation classes. One such
class is the class $\Av(\{231,312\})$ whose members are known as \emph{layered
permutations}. Layered permutations are exactly the permutations that can be
obtained as direct sums of decreasing sequences, or equivalently, they are the
elements of $\Av(12)\wr\Av(21)$. Another important class is the class
$\Av(\{2413,3142\}$ of \emph{separable permutations}. Separable permutations are
exactly the permutations obtainable from the singleton permutation by iterated
direct sums and skew sums. Therefore, they form the smallest nonempty class
closed under both direct sums and skew sums.

\subsection{Atomicity, amalgamation and Ramseyness}\label{ssec-ramsey}

Splittability is related to several previously studied structural properties of
permutation classes. These properties, including splittability itself, are 
not specific to classes of permutations, but are more generally applicable to
classes of arbitrary objects that are partially ordered by containment, such as
posets, graphs, directed graphs or uniform hypergraphs. 

A convenient formalism for such structures is based on the notion of
`relational structure'. A \emph{relational structure} with \emph{signature}
$(s_1,\dotsc,s_k)$ on a vertex set $V$ is a $k$-tuple $(R_1,\dotsc,R_k)$, where
$R_i$ is a relation of arity $s_i$ on~$V$, i.e., $R_i$ is a set of ordered
$s_i$-tuples of (not necessarily distinct) elements of~$V$. For a relational
structure, we may define in an obvious way the notion of induced substructure. A
\emph{class of relational structures} is a set of isomorphism types of finite
relational structures all sharing the same signature and closed under taking
induced substructures. A detailed treatment of the topic of relational
structures can be found, e.g., in the papers of Cameron~\cite{cam_age} or
Pouzet~\cite{pouzet}. 

As explained, e.g., in~\cite{cam_per}, a permutation of order $n$ can be
represented as a relational structure formed by two linear orders on a vertex
set of size $n$, and so permutation classes are a particular case of classes of
relational structures of signature $(2,2)$.

A structural property that is directly related to splittability is known as
atomicity. A class of relational structures $C$ is \emph{atomic} if it does not
have proper subclasses $A$ and $B$ such that $C=A\cup B$.
Fra\"iss\'e (\cite{fra54}, see also~\cite{fra86}) has proved the following
result:

\begin{fact}[Fra\"\i ss\'e~\cite{fra54}]
 For a class $C$ of relational structures, the following properties
are equivalent:
\begin{itemize}
 \item $C$ is atomic.
 \item For any two elements $\rho,\tau\in C$ there is a $\sigma\in C$ which
contains both $\rho$ and $\tau$ as substructures (this is known as
\emph{the joint embedding property}, and $\sigma$ is a \emph{joint embedding}
of $\rho$ and $\tau$).
 \item There is a (possibly infinite) relational structure $\Gamma$ such that
$C$ is the set of all the finite substructures of $\Gamma$ (we then say that $C$
is \emph{the age} of $\Gamma$).
\end{itemize}
\end{fact}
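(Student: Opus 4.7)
I would close the cycle of implications $(3)\Rightarrow(2)\Rightarrow(1)\Rightarrow(2)$, and add $(2)\Rightarrow(3)$ separately; three of these four steps are short manipulations with hereditariness and the joint embedding property, while the single piece of real work is the union construction behind $(2)\Rightarrow(3)$.

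For $(3)\Rightarrow(2)$, if $C$ is the age of $\Gamma$ and $\rho,\tau\in C$, I fix any embeddings of $\rho$ and $\tau$ into $\Gamma$; the substructure of $\Gamma$ induced on the finite union of their images is a member of $C$ that contains both. For $(2)\Rightarrow(1)$, I argue by contrapositive: given a cover $C=A\cup B$ by proper subclasses, pick $\rho\in C\setminus A$ and $\tau\in C\setminus B$. Any joint embedding $\sigma\in C$ of $\rho$ and $\tau$ must lie in $A$ or in $B$, but hereditariness forces $\rho\in A$ in the first case and $\tau\in B$ in the second, contradicting our choice; hence no such $\sigma$ exists and JEP fails. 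For $(1)\Rightarrow(2)$, contrapositively, if $\rho,\tau\in C$ admit no joint embedding in $C$, then the sets $\{\sigma\in C:\rho\not\subseteq\sigma\}$ and $\{\sigma\in C:\tau\not\subseteq\sigma\}$ are hereditary, proper (since $\rho$ and $\tau$ themselves lie outside them), and cover $C$, so atomicity fails.

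The only substantial step is $(2)\Rightarrow(3)$. Because the signature has finite arities and all structures are finite, there are only countably many isomorphism types in $C$; enumerate them as $c_1,c_2,\dotsc$. I build a chain $\sigma_1\hookrightarrow\sigma_2\hookrightarrow\cdots$ in $C$ by setting $\sigma_1=c_1$ and, given $\sigma_{n-1}$, applying JEP to $\sigma_{n-1}$ and $c_n$ to obtain $\sigma_n\in C$ together with fixed embeddings of both into $\sigma_n$. Taking $\Gamma$ to be the direct limit along these chosen embeddings produces a (countable) relational structure in which every $c_n$ embeds, so $C\subseteq\mathrm{Age}(\Gamma)$. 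Conversely, any finite substructure of $\Gamma$ already sits inside some $\sigma_n\in C$ and therefore lies in $C$ by hereditariness, giving $\mathrm{Age}(\Gamma)\subseteq C$.

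The main obstacle in this last step is essentially bookkeeping: one has to make the chosen embeddings $\sigma_{n-1}\hookrightarrow\sigma_n$ coherent across stages so that the direct limit is a well-defined single structure, and one must justify the countability claim from the signature data. Once those points are settled, the two inclusions defining $\mathrm{Age}(\Gamma)=C$ follow directly from the construction, and combined with the three short implications above, the equivalence of $(1)$, $(2)$ and $(3)$ is complete.
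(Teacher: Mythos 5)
The paper does not prove this statement at all: it is quoted as a known Fact with a citation to Fra\"iss\'e's 1954 paper, so there is no internal proof to compare against. Your argument is the standard proof of Fra\"iss\'e's theorem on ages and is correct. The three ``easy'' implications are handled properly: in $(2)\Rightarrow(1)$ and $(1)\Rightarrow(2)$ you correctly use that the paper's notion of ``class'' is hereditary by definition, so the sets $\{\sigma\in C:\rho\not\subseteq\sigma\}$ and $\{\sigma\in C:\tau\not\subseteq\sigma\}$ are indeed proper subclasses covering $C$ exactly when $\rho$ and $\tau$ have no joint embedding. The chain-and-direct-limit construction for $(2)\Rightarrow(3)$ is the classical one; the countability you invoke does follow from the signature being a finite tuple of finite arities together with the finiteness of the structures, and the coherence of the embeddings along the chain is genuine but routine bookkeeping, as you say. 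The only loose ends are trivial: you should fix a convention for the empty class (for which all three conditions hold or fail vacuously depending on whether one admits the empty structure), and note that $(1)\Rightarrow(2)\Rightarrow(1)$ together with $(2)\Leftrightarrow(3)$ already closes the equivalence, so your listed cycle contains one redundant arrow. Neither affects correctness.
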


A permutation class that is not atomic is clearly splittable. Thus, when
studying splittability, we mostly focus on atomic classes. For such situation,
we may slightly extend Lemma~\ref{lem-simple}.

\begin{lemma}\label{lem-simple2}
If $C$ is an atomic permutation class, then the following properties are 
equivalent:
\begin{itemize}
 \item[(a)] $C$ is splittable.
 \item[(b')] $C$ has a nontrivial splitting into two equal parts, i.e., a
splitting of the form $\{P,P\}$.
 \item[(c')] $C$ has a splitting of the form $\{P,P\}$, where $P$ is a
proper subclass of~$C$.
 \item[(d')] $C$ has a nontrivial splitting of the form $\{P,P\}$, where $P$ is
a principal class.
\end{itemize}
\end{lemma}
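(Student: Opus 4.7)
The plan is to prove the cycle (a) $\Rightarrow$ (d') $\Rightarrow$ (c') $\Rightarrow$ (b') $\Rightarrow$ (a). The implication (b') $\Rightarrow$ (a) is immediate from the definition of splittability, and the descent from (d') through (b') is essentially bookkeeping, so the substantive step is (a) $\Rightarrow$ (d').

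For the two intermediate implications I would argue as follows. For (c') $\Rightarrow$ (b'): if $P \subsetneq C$, then $P$ is in particular not a superset of $C$, so the splitting $\{P,P\}$ is nontrivial. For (d') $\Rightarrow$ (c'): suppose $C \subseteq \Av(\rho) \m \Av(\rho)$ with $\rho \in C$, and pass to $P := \Av(\rho) \cap C$. Since $C$ is hereditary, any $\pi \in C$ merged from two elements of $\Av(\rho)$ is already merged from two elements of $P$ (the two subpermutations of $\pi$ automatically lie in $C$), and $P \subsetneq C$ because $\rho \in C \setminus P$.

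The main step is (a) $\Rightarrow$ (d'). By Lemma~\ref{lem-simple}(d), splittability of $C$ supplies permutations $\sigma, \tau \in C$ with
\[
C \;\subseteq\; \Av(\sigma) \m \Av(\tau).
\]
This is where atomicity enters: by the joint embedding property (Fra\"\i ss\'e's theorem), there exists $\rho \in C$ containing both $\sigma$ and $\tau$. Containment of patterns is transitive, so any permutation that avoids $\sigma$ automatically avoids $\rho$, whence $\Av(\sigma) \subseteq \Av(\rho)$, and likewise $\Av(\tau) \subseteq \Av(\rho)$. Therefore
\[
C \;\subseteq\; \Av(\sigma) \m \Av(\tau) \;\subseteq\; \Av(\rho) \m \Av(\rho),
\]
and nontriviality is witnessed by $\rho \in C \setminus \Av(\rho)$.

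The only genuine idea in the argument is this use of atomicity --- through joint embedding --- to fuse two distinct forbidden patterns $\sigma$ and $\tau$ into a single pattern $\rho$ that dominates both. Without atomicity one cannot in general upgrade a splitting with two unequal parts into one with two equal parts, since the two parts could witness genuinely independent obstructions to membership in $C$. I expect no technical obstacle beyond invoking Fra\"\i ss\'e's theorem in the form already cited in the preceding fact.
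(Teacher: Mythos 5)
Your proposal is correct and follows essentially the same route as the paper: invoke Lemma~\ref{lem-simple}(d) to get a nontrivial splitting $\{\Av(\sigma),\Av(\tau)\}$ with $\sigma,\tau\in C$, then use the joint embedding property of the atomic class to find $\rho\in C$ containing both, so that $\Av(\sigma),\Av(\tau)\subseteq\Av(\rho)$ and $\{\Av(\rho),\Av(\rho)\}$ witnesses (d'), with (c') obtained by intersecting with $C$. The paper packages the remaining implications as "trivial" rather than as an explicit cycle, but the substance is identical.
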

\begin{proof}
Let $C$ be a splittable atomic class. By part (d) of Lemma~\ref{lem-simple}, $C$
has a nontrivial splitting of the form $\{\Av(\pi),\Av(\pi')\}$. Note that both
$\pi$ and $\pi'$ belong to $C$, otherwise the splitting would be trivial. By
the joint embedding property, there is a permutation $\sigma\in C$ which
contains both $\pi$ and~$\pi'$. Consequently, $\Av(\pi)$ and $\Av(\pi')$ are
subclasses of $\Av(\sigma)$. This shows that $\{\Av(\sigma),\Av(\sigma)\}$ is a
splitting of $C$ witnessing property (d'), and $\{\Av(\sigma)\cap
C,\Av(\sigma)\cap C\}$ is witnessing property (c'). The rest of the lemma
follows trivially.
\end{proof}

It is not hard to see that any principal class $\Av(\pi)$ of permutations
is atomic. Indeed, if $\pi$ is indecomposable, then $\Av(\pi)$ is closed under
direct sums, and otherwise it is closed under skew sums, which in both cases
implies the joint embedding property. 

Atomic permutation classes have been studied by Atkinson, Murphy and Ru\v
skuc~\cite{amr_wqo,amr_sub,murphy_thesis}. Among other results, they
show (see \cite[Theorem 2.2]{amr_wqo} and \cite[Proposition 188]{murphy_thesis})
that every permutation class that is partially well-ordered by containment
(equivalently, any class not containing an infinite antichain) is a union of
finitely many atomic classes. We remark that such classes of relational
structures admit the following Dilworth-like characterization~\cite[Theorem
1.6]{jelkla}: a class $C$ of relational structures is a union of $k$ atomic
classes if and only if it does not contain $k+1$ elements no two of which admit
a joint embedding in $C$. Moreover, a class $C$ that is not a union of finitely
many atomic classes contains an infinite sequence of elements, no two of which
admit a joint embedding in~$C$.

The joint embedding property can be further strengthened, leading to a concept
of amalgamation, which we define here for permutations, though it can be
directly extended to other relational structures. Informally speaking, a
permutation class $C$ is $\pi$-amalgamable if for any two permutations $\rho_1$
and $\rho_2$ of $C$, each having a prescribed occurrence of $\pi$, we may find a
joint embedding of $\rho_1$ and $\rho_2$ in which the two prescribed occurrences
of $\pi$ coincide. Formally, suppose that $C$ is a permutation class, and
$\pi$ is its element. Then $C$ is \emph{$\pi$-amalgamable} if for any two
permutations $\rho_1,\rho_2\in C$ and any two mappings $f_1$ and $f_2$ such that
$f_i$ is an embedding of $\pi$ into $\rho_i$, there is a permutation $\sigma\in
C$ and two embeddings $g_1$ and $g_2$, where $g_i$ is an embedding of $\rho_i$
into $\sigma$, with the property that $g_1\circ f_1=g_2\circ f_2$.

Let $\binom{\sigma}{\pi}$ denote the set of all occurrences of a permutation
$\pi$ in a permutation $\sigma$, and if $S$ is a subsequence of $\sigma$, let
$\binom{S}{\pi}$ be the set of those occurrences of $\pi$ that are contained
in~$S$. A permutation class $C$ is \emph{$\pi$-Ramsey} if for every $\rho\in C$
there is a $\sigma\in C$ such that whenever we color $\binom{\sigma}{\pi}$ by
two colors, there is a subsequence $S\in\binom{\sigma}{\rho}$ such that all
elements of $\binom{S}{\pi}$ have the same color.

The next lemma follows from the results of Nešetřil~\cite{nese_graf} (see
also~\cite[Theorem 4.2]{nese_hom}) 
\begin{fact}\label{fac-ama} Let $C$ be an atomic permutation class and $\pi$ an
element of~$C$. If $C$ is $\pi$-Ramsey then $C$ is also $\pi$-amalgamable. 
\end{fact}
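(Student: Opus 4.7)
The plan is to follow the standard structural-Ramsey route (going back to Ne\v se\v tr\v il and R\"odl): use atomicity to bring $\rho_1$ and $\rho_2$ together under a single roof, and then use Ramsey to force the two prescribed copies of $\pi$ to coincide.

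Given $\rho_1,\rho_2\in C$ and embeddings $f_i\colon \pi\to\rho_i$, I would first invoke Fra\"\i ss\'e's fact to get the joint embedding property from atomicity, and use it to find some $\tau\in C$ with embeddings $h_i\colon \rho_i\to\tau$. The compositions $P_i:=h_i\circ f_i$ then give two (generally distinct) copies of $\pi$ inside $\tau$; if we were lucky enough to have $P_1=P_2$, then $\tau$ itself would already be the desired amalgam. The role of Ramsey is exactly to replace this lucky coincidence with something forcible. The next step is therefore to apply $\pi$-Ramseyness to $\tau$ to obtain a witness $\sigma\in C$ with the property that every 2-coloring of $\binom{\sigma}{\pi}$ has a $\tau$-copy $T\subseteq\sigma$ for which $\binom{T}{\pi}$ is monochromatic. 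If we can design a 2-coloring $\chi$ of $\binom{\sigma}{\pi}$ such that monochromaticity of the two distinguished $\pi$-copies inside any $T\cong\tau$ forces them to literally coincide as subsequences of $\sigma$, then taking $g_i$ to be the isomorphism $\tau\cong T$ composed with $h_i$ yields $g_1\circ f_1 = g_2\circ f_2$ and gives the required amalgamation inside $\sigma$.

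The main obstacle is step three: a single ad hoc 2-coloring is unlikely to achieve the alignment directly, since ``same color'' is much weaker than ``same copy''. The standard remedy is to iterate Ramsey (using that two-color Ramsey implies $k$-color Ramsey by pigeonhole) and to invoke a canonical-partition style statement, which says that after enough iteration every coloring of $\binom{\sigma}{\pi}$ can be assumed canonical with respect to the relative positions of $\pi$-copies inside $\tau$-copies; the canonical coloring is then incompatible with $P_1\neq P_2$. An equivalent and conceptually cleaner formulation, which I would probably adopt in the write-up, is to pass to the category of \emph{$\pi$-marked} permutations --- pairs $(X,f)$ with $X\in C$ and $f\colon\pi\hookrightarrow X$ --- where $\pi$-amalgamation of $C$ is literally the joint embedding property. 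In this marked category, $\pi$-Ramseyness of $C$ readily implies JEP by a compactness argument, and this is precisely the content of \cite{nese_graf}, which I would cite at the crucial step rather than reprove in detail.
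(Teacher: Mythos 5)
Your first two steps (joint embedding via atomicity to get $\tau\in C$ containing $\rho_1,\rho_2$, then applying $\pi$-Ramseyness with target $\tau$ to get $\sigma$) are the right opening, but the crucial step is missing, and the obstacle you identify is an artifact of aiming at the wrong goal. You try to design a coloring whose monochromatic $\tau$-copies force the two distinguished $\pi$-copies to \emph{coincide}; since no single 2-coloring can plausibly do that, you retreat to canonical-partition machinery, an unexplained ``compactness argument'', and finally to citing \cite{nese_graf} at exactly the point where the content lies. (The paper itself states this as a Fact with a citation to Ne\v{s}et\v{r}il, so citing is legitimate; but your sketch around the citation asserts that heavy machinery is needed, which is not the case, and the ``readily implies JEP by compactness'' claim in the marked category is not an argument.)

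The standard proof needs no coincidence of copies inside one $\tau$-copy and no canonical partitions; it uses a single natural coloring, of the same flavor as the one in the paper's proof of Lemma~\ref{lem-ama}. Color an occurrence $P\in\binom{\sigma}{\pi}$ red if there is an embedding $g\colon\rho_1\to\sigma$ with $g\circ f_1$ having image $P$ (i.e., $P$ extends over $f_1$ to a copy of $\rho_1$ in $\sigma$), and blue otherwise. Let $e\colon\tau\to\sigma$ be the embedding given by Ramseyness whose copy $T$ has $\binom{T}{\pi}$ monochromatic. Since $T$ contains the $\rho_1$-copy $e\circ h_1$, the occurrence determined by $e\circ h_1\circ f_1$ is red, so the color of $T$ is red. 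Hence the occurrence determined by $e\circ h_2\circ f_2$ is also red, i.e., there is an embedding $g_1\colon\rho_1\to\sigma$ whose restriction along $f_1$ has the same image as $e\circ h_2\circ f_2$; because an occurrence of $\pi$ determines its embedding uniquely (the embedding is the increasing enumeration of the positions), this gives $g_1\circ f_1=g_2\circ f_2$ with $g_2=e\circ h_2$, which is exactly $\pi$-amalgamation in $\sigma\in C$. So the key idea you were missing is to color by \emph{extendability over $f_1$}, not by identity of copies.
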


We say, for $k\in \bbN$, that a permutation class $C$ is \emph{$k$-Ramsey} if it
is $\pi$-Ramsey for every $\pi\in C$ of order at most $k$, and we say that it is
\emph{$k$-amalgamable} if it is $\pi$-amalgamable for every~$\pi\in C$ of order
at most~$k$. We also say that $C$ is a \emph{Ramsey class} (or \emph{amalgamable
class}) if it is $k$-Ramsey (or $k$-amalgamable) for every~$k$. 

Ramsey classes and amalgamable classes of various types of relational structures
have attracted considerable amount of attention, due in part to their connection
to so-called Fra\"\i ssé limits and homogeneous structures. We shall not explain
these concepts here, and refer the interested reader to, e.g., a survey of the
field presented in~\cite{nese_hom}. 

Cameron~\cite{cam_per} has shown that there are only five infinite amalgamable
classes of permutations: these are $\Av(12)$, $\Av(21)$, $\Av(231,312)$ (the
class of layered permutations), $\Av(213,132)$ (the class of complements of
layered permutations), and the class of all permutations. B\"ottcher and
Foniok~\cite{bofo} have subsequently proved that all these amalgamable classes
are Ramsey. In view of Fact~\ref{fac-ama}, there can be no other atomic Ramsey
classes of permutations. By suitably adapting Cameron's proof, it is actually
possible to deduce that the five nontrivial amalgamable classes are also the
only 3-amalgamable permutation classes, that is, any 3-amalgamable class of
permutations is already Ramsey.

The above-defined Ramsey properties are closely related to splittability: it is
straightforward to observe, by referring to condition (d') of
Lemma~\ref{lem-simple2}, that an atomic class of permutations is unsplittable 
precisely when it is 1-Ramsey. Consequently, by Fact~\ref{fac-ama}, any
permutation class that fails to be 1-amalgamable is splittable. To make our
exposition self-contained, we prove this simple result here.

\begin{lemma}\label{lem-ama}
 If a permutation class $C$ is not 1-amalgamable, then it is splittable. More
precisely, if $\rho_1$ and $\rho_2$ are two elements of $C$ that fail to have a
1-amalgamation in $C$, then $C$ has the splitting
$\{\Av(\rho_1),\Av(\rho_2)\}$. \end{lemma}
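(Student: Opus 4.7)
The plan is to establish the containment $C \subseteq \Av(\rho_1)\m\Av(\rho_2)$; nontriviality of the splitting is then automatic, since $\rho_i \in C$ while $\rho_i \notin \Av(\rho_i)$.

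Since $\rho_1,\rho_2$ witness the failure of $1$-amalgamability, there are embeddings $f_i\colon 1 \to \rho_i$ — equivalently, distinguished elements $a_i = f_i(1) \in \rho_i$ — such that no $\sigma \in C$ admits embeddings $g_i\colon \rho_i \to \sigma$ with $g_1(a_1) = g_2(a_2)$. The key observation, which I would record as a preliminary remark, is the following: for every $\sigma \in C$ and every point $p \in \sigma$, it is impossible that $\sigma$ contains simultaneously an occurrence of $\rho_1$ in which $p$ plays the role of $a_1$ and an occurrence of $\rho_2$ in which $p$ plays the role of $a_2$; otherwise these two occurrences would provide a $1$-amalgamation of $(\rho_1,f_1)$ and $(\rho_2,f_2)$ inside $\sigma \in C$, contrary to hypothesis.

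Now fix an arbitrary $\pi \in C$. Applying the observation with $\sigma = \pi$, each element $p$ of $\pi$ satisfies at least one of the following: (R) no occurrence of $\rho_1$ in $\pi$ assigns the role of $a_1$ to $p$, or (B) no occurrence of $\rho_2$ in $\pi$ assigns the role of $a_2$ to $p$. Color $p$ red whenever (R) holds and blue whenever (B) holds but (R) does not (so every element receives exactly one color). I claim the red subsequence avoids $\rho_1$ and the blue subsequence avoids $\rho_2$. Indeed, if the red elements contained a copy of $\rho_1$, then the element of this copy corresponding to $a_1$ would be red while playing the role of $a_1$ in an occurrence of $\rho_1$, directly contradicting (R). The argument for blue and $\rho_2$ is symmetric. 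Hence $\pi$ is merged from an element of $\Av(\rho_1)$ and an element of $\Av(\rho_2)$, as desired.

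No real obstacle is expected; the whole argument is a one-step unfolding of the definition, and the main thing to get right is the bookkeeping of which element of $\rho_i$ is the ``marked'' one, so as to properly justify that the red/blue coloring is well-defined and that each monochromatic class avoids the corresponding pattern.
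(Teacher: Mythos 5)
Your proof is correct and follows essentially the same route as the paper: both arguments color an element according to whether it can serve as the image of the marked point in an embedding of $\rho_1$, and your classes (R)/(B) coincide with the paper's red/blue classes since, by your key observation, any element failing (R) automatically satisfies (B). The verification that each color class avoids the corresponding pattern is the same in both proofs.
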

\begin{proof}
 Suppose that $C$ is not $1$-amalgamable. That is, there are two permutations 
$\rho_1\in C$ and $\rho_2\in C$ of size $n$ and $m$ respectively, and two
embeddings $f_1$ and $f_2$ of the singleton permutation 1 into $\rho_1$ and
$\rho_2$, such that there does not exist any $\sigma\in C$ with embeddings $g_i$
of $\rho_i$ into $\sigma$ that would satisfy $g_1\circ f_1= g_2\circ f_2$.

Suppose that $\rho_1(a)$ is the unique element of $\rho_1$ in the range of
$f_1$, and similarly $\rho_2(b)$ the unique element of $\rho_2$ in the range
of~$f_2$. Let $\sigma\in C$ be arbitrary. Our goal is to color the elements of
$\sigma$ red and blue, so that the red elements avoid $\rho_1$ and the blue
ones avoid~$\rho_2$. To achieve this, we color an element $\sigma(i)$ of
$\sigma$ blue if and only if there is an embedding of $\rho_1$ into $\sigma$
which maps $\rho_1(a)$ to $\sigma(i)$. The remaining elements of $\sigma$ are
red. 

We see that the red elements of $\sigma$ do not contain $\rho_1$, since
otherwise there would be an embedding of $\rho_1$ into $\sigma$ that maps
$\rho_1(a)$ to a red element. Also, the blue elements of $\sigma$ have no copy
of $\rho_2$, and more generally, in any embedding of $\rho_2$ into $\sigma$, the
element $\rho_2(b)$ must map to a red element of $\sigma$, since otherwise we
would obtain a joint embedding of $\rho_1$ and $\rho_2$ identifying $\rho_1(a)$
with $\rho_2(b)$, which is impossible.
\end{proof}

Concepts analogous to splittability and 1-Ramseyness have been previously
studied, under various names, in connection to combinatorial structures other
than permutations. This line of research dates back at least to the work of
Folkman~\cite{folk}, who considered both edge-decompositions and
vertex-decompositions of graphs of bounded clique size, and showed, among other
results, that the class of graphs avoiding a clique of a given size is
1-Ramsey. Nešetřil and R\"odl~\cite{nero} obtained other examples of
1-Ramsey classes of graphs. Later, Pouzet~\cite{pou_imp} and El-Zahar and
Sauer~\cite{elsa} considered a notion equivalent to splittability in context of
atomic classes of relational structures, as part of a hierarchy of several
Ramsey-type properties. For further developments in this area, see e.g. the
works of Laflamme et al.~\cite{LNPS_indivisible} or Bonato et
al.~\cite{BCDT_pigeonhole}. We will not go into any further details of these
results, as they do not seem to be applicable to our problem of identifying
splittable classes of permutations.

In Section~\ref{sec-unsplit}, we will give unsplittability criteria, which will
imply, among other results, that any wreath-closed permutation class is
unsplittable. Next, in Section~\ref{sec-split}, we will look for examples of 
splittable classes. This turns out to be considerably more challenging. Our main
result in this direction shows that the class $\Av(\sigma)$ is splittable
whenever $\sigma$ is a decomposable permutation of order at least four. We then
describe, in Subsection~\ref{ssec-circ}, the connection between splittability of
permutation classes and the chromatic number of circle graphs. This connection
allows us to exploit previous work to give both positive and negative results on
the existence of certain permutation splittings.

\section{Unsplittable classes and unavoidable patterns}\label{sec-unsplit}

We now focus on the unsplittable permutation classes. By Lemma~\ref{lem-simple},
when looking for splittings of a class $C$, we may restrict our attention to
splittings whose parts are principal classes not containing $C$, i.e., classes
of the form $\Av(\pi)$ for some~$\pi\in C$. The basic idea of our approach will
be to identify a large set of permutations $\pi\in C$ for which we can prove
that $\Av(\pi)$ is not a part of any irredundant splitting of~$C$. This
motivates our next definition.

\begin{definition}
Let $C$ be a permutation class, and let $\pi$ be an element of~$C$. We say that
$\pi$ is \emph{unavoidable in $C$}, if $C$ has no irredundant splitting that
contains $\Av(\pi)$ as a part. We let $\U C$ denote the set of all
the unavoidable permutations in~$C$.
\end{definition}

The next two observations list several basic properties of unavoidable
permutations. These properties follow directly from the definitions or from the
arguments used to prove Lemma~\ref{lem-simple}. We therefore omit their proofs.

\begin{observation}\label{obs-unavo}
For a permutation class $C$ and a permutation $\pi\in C$, the following
statements are equivalent.
\begin{enumerate}
\item The permutation $\pi$ is unavoidable in $C$.
\item For any permutation $\tau\in C$, there is a permutation $\sigma\in C$
such that any red-blue coloring of $\sigma$ has a red copy of $\tau$ or a
blue copy of~$\pi$.
\item In any irredundant splitting $\{P_1,\dotsc,P_k\}$ of $C$, all the 
parts $P_i$ contain~$\pi$.
\item $C$ has no nontrivial splitting into two parts, where one of the parts
is~$\Av(\pi)$.
\end{enumerate}
\end{observation}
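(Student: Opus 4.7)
My plan is to establish all four equivalences by proving $(1)\Leftrightarrow(4)$, $(2)\Leftrightarrow(4)$, and $(1)\Leftrightarrow(3)$. Each link reduces to a definitional manipulation together with the principal-refinement trick used in the proof of Lemma~\ref{lem-simple}; the only step where I anticipate real work is the direction $(1) \Rightarrow (3)$.

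For $(1)\Leftrightarrow(4)$, I would observe that $(4)$ is essentially the two-part case of $(1)$, since for two-part splittings nontriviality and irredundance coincide. For the converse, given any irredundant splitting $\{\Av(\pi), P_2, \dots, P_k\}$ of $C$, I would set $Q = P_2 \m \cdots \m P_k$; irredundance forces $C \not\subseteq Q$, and $\pi \in C$ forces $C \not\subseteq \Av(\pi)$, so $\{\Av(\pi), Q\}$ is a nontrivial two-part splitting of $C$, witnessing the failure of $(4)$.

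For $(2)\Leftrightarrow(4)$, I note that a red-blue coloring of $\sigma$ with no red $\tau$ and no blue $\pi$ is exactly a witness to $\sigma \in \Av(\tau) \m \Av(\pi)$, so $(2)$ asserts that $\{\Av(\tau), \Av(\pi)\}$ fails to be a splitting of $C$ for every $\tau \in C$. This is the principal two-part case of $(4)$, and the general case reduces to it exactly as in the proof of Lemma~\ref{lem-simple}(d): from a nontrivial splitting $\{P, \Av(\pi)\}$ one picks $\tau \in C \setminus P$ and uses the heredity of $P$ to get $P \subseteq \Av(\tau)$, so that $\{\Av(\tau), \Av(\pi)\}$ is also a splitting of $C$.

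Finally, for $(1)\Leftrightarrow(3)$, the direction $(3) \Rightarrow (1)$ is immediate since $\pi \notin \Av(\pi)$. The main obstacle is the converse, which I would prove by contraposition: suppose some irredundant splitting $\{P_1, \dots, P_k\}$ of $C$ has a part $P_i$ with $\pi \notin P_i$, so that $P_i \subseteq \Av(\pi)$ by heredity. I would replace $P_i$ by the larger class $\Av(\pi)$ and then iteratively delete any redundant parts to restore irredundance. The key observation is that the new part $\Av(\pi)$ survives this pruning: removing it would leave $\{P_j : j \neq i\}$ as a splitting of $C$, contradicting the irredundance of the original collection. The resulting irredundant splitting still has $\Av(\pi)$ as a part, contradicting the assumed unavoidability of $\pi$.
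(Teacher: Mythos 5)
Your proof is correct: all three links are sound, including the only nontrivial one, $(1)\Rightarrow(3)$, where your key observation — that the inserted part $\Av(\pi)$ cannot be discarded during the pruning, since the remaining parts would form a proper submultiset of the original irredundant splitting that still splits $C$ — is exactly what is needed. The paper omits the proof, saying only that the observation follows from the definitions and the arguments of Lemma~\ref{lem-simple}, and your argument (the merge-the-remaining-parts trick and the heredity/principal-class refinement choosing $\tau\in C\setminus P$) is precisely that intended route, so it matches the paper's approach.
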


\begin{observation}\label{obs-core}
The set $\U C$ of unavoidable permutations of a nonempty permutation class $C$
has these properties:
\begin{enumerate}
\item $\U C$ is a nonempty permutation class, and in particular, it is
hereditary.
\item $\U C \subseteq C$.                                              
\item If $\{P_1,\dotsc,P_m\}$ is an irredundant splitting of $C$, then $\U
C\subseteq P_i$ for each~$i$.
\item $\U C =C$ if and only if $C$ is unsplittable.               
\end{enumerate} 
\end{observation}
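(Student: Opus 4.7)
My plan is to verify the four properties in turn, leveraging the equivalent characterizations of unavoidability in Observation~\ref{obs-unavo}. Property~(2) is immediate from the definition of $\U C$. For nonemptyness in~(1), I would note that $\Av(1)=\emptyset$ and that merging with the empty class acts trivially on the merge product, so $\Av(1)$ is redundant in every splitting and hence cannot appear in an irredundant one; thus $1\in\U C$, and the singleton $1$ lies in every nonempty class. For the hereditariness of $\U C$, I would use part~2 of Observation~\ref{obs-unavo}: if $\pi\in\U C$ and $\pi'$ is contained in $\pi$, then a blue copy of $\pi$ automatically contains a blue copy of $\pi'$, so the same Ramsey-type witness $\sigma\in C$ for $\pi$ serves $\pi'$ as well.

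The main content is property~(3), which I expect to be the only slightly delicate step. Assuming $\{P_1,\dotsc,P_m\}$ is an irredundant splitting of $C$ but $\pi\in\U C$ is missing from some $P_j$, I have $P_j\subseteq\Av(\pi)$, so by monotonicity of $\m$ I may replace $P_j$ with $\Av(\pi)$ and still obtain a splitting $S_0$ of $C$. If $S_0$ is not already irredundant, I would greedily delete redundant parts \emph{other than $\Av(\pi)$}, obtaining a decreasing sequence of splittings. The key observation to justify is that $\Av(\pi)$ itself never becomes redundant: removing it from any stage would leave a submultiset of $\{P_i : i\neq j\}$, and this multiset already fails to be a splitting by the irredundance of the original $\{P_1,\dotsc,P_m\}$. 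The process therefore terminates at an irredundant splitting of $C$ still containing $\Av(\pi)$, contradicting $\pi\in\U C$.

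Finally, property~(4) follows quickly. If $C$ is unsplittable, the only irredundant splittings of $C$ consist of a single part $Q\supseteq C$, and no such $Q$ can equal $\Av(\pi)$ with $\pi\in C$ since $\pi\notin\Av(\pi)$; hence every $\pi\in C$ is unavoidable, i.e., $\U C=C$. Conversely, if $C$ is splittable, I would apply Lemma~\ref{lem-simple}(d) to produce a nontrivial two-part splitting $\{\Av(\sigma),\Av(\pi)\}$ with $\sigma,\pi\in C$; neither part alone can contain $C$ (that would force $\sigma$ or $\pi$ out of $C$), so the splitting is automatically irredundant, and then $\sigma\notin\U C$ is incompatible with $\U C=C$.
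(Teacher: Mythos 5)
Your proof is correct. The paper omits the proof of this observation, stating that it follows directly from the definitions and from the arguments used to prove Lemma~\ref{lem-simple}; your write-up supplies exactly those routine details (the inclusion $P_j\subseteq\Av(\pi)$ for a hereditary part missing $\pi$, monotonicity of $\m$ to swap in $\Av(\pi)$ and to justify the greedy reduction to an irredundant splitting, and the coloring characterization from Observation~\ref{obs-unavo} for heredity), so it matches the paper's intended approach.
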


By the last part of the previous observation, to show that a permutation class
$C$ is unsplittable, it is enough to prove $\U C=C$. To achieve this, we will
show that certain closure properties of $C$ imply analogous closure properties
of~$\U C$.

\begin{lemma} Let $C$ be a permutation class. If, for a set of permutations 
$X$, the class $C$ is closed under $\wr X$, then $\U C$ is also closed under 
$\wr X$, and if $C$ is closed under $X\wr$, then so is $\U C$. Consequently, 
if $C$ is wreath-closed, then $\U C = C$ and $C$ is unsplittable.
\end{lemma}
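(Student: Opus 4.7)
The plan is to split the proof into a ``single-$\xi$'' Ramsey-style step for the $\wr X$ case, then extend it to arbitrary tuples via iteration and the hereditariness of $\U C$ (Observation~\ref{obs-core}(1)), handle the $X\wr$ case by a similar but more direct argument, and finally deduce the wreath-closed corollary from the fact that $1 \in \U C$. The key technical ingredient throughout is a translation between $2$-colorings of an inflated permutation and $2$-colorings of its outer skeleton.

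First I would establish the single-$\xi$ lemma: if $\pi \in \U C$ and $\xi \in X$, then $\pi[\xi,\dotsc,\xi] \in \U C$. Invoking Observation~\ref{obs-unavo}(2), I fix $\tau \in C$ and use $\pi$-unavoidability to produce $\sigma^* \in C$ in which every red-blue coloring has a red $\tau$ or a blue $\pi$; then I set $\sigma = \sigma^*[\xi,\dotsc,\xi]$, which lies in $C$ by the assumed $\wr X$ closure. Any $2$-coloring of $\sigma$ induces a $2$-coloring of $\sigma^*$ by calling a position $j$ \emph{blue} iff the $j$-th $\xi$-block of $\sigma$ is monochromatically blue, and \emph{red} otherwise. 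A blue $\pi$ in the induced coloring of $\sigma^*$ yields $|\pi|$ entirely-blue $\xi$-blocks arranged in the pattern $\pi$, giving a blue $\pi[\xi,\dotsc,\xi]$ in $\sigma$; a red $\tau$ in the induced coloring yields $|\tau|$ blocks each containing at least one red point, and choosing one red point per block produces a red $\tau$ in $\sigma$.

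To extend to arbitrary tuples $\xi_1,\dotsc,\xi_n \in X$ (with $n = |\pi|$), I would apply the single-$\xi$ lemma $n$ times in succession: from $\pi \in \U C$ I pass to $\pi[\xi_1,\dotsc,\xi_1] \in \U C$, then inflating again by $\xi_2$ to $\pi[\xi_1,\dotsc,\xi_1][\xi_2,\dotsc,\xi_2] \in \U C$, and so on, ending with $\pi[\mu,\dotsc,\mu] \in \U C$ where $\mu = \xi_1 \wr \xi_2 \wr \cdots \wr \xi_n$ (left-associated) is a single permutation containing every $\xi_i$ as a subpattern. Hence $\pi[\xi_1,\dotsc,\xi_n]$ is a subpattern of $\pi[\mu,\dotsc,\mu]$, so it lies in $\U C$ by heredity. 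For the $X\wr$ statement I would argue directly without iteration: given $\rho \in X$, $\eta_1,\dotsc,\eta_{|\rho|} \in \U C$ and $\tau \in C$, for each $i$ apply $\eta_i$-unavoidability to obtain $\tau_i \in C$ such that every $2$-coloring of $\tau_i$ has a red $\tau$ or a blue $\eta_i$; then $\sigma = \rho[\tau_1,\dotsc,\tau_{|\rho|}] \in C$ by $X\wr$ closure, and in any $2$-coloring of $\sigma$ either some block admits a red $\tau$, or every block $i$ contains a blue $\eta_i$ and these assemble into a blue $\rho[\eta_1,\dotsc,\eta_{|\rho|}]$.

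For the wreath-closed conclusion I would specialize the $\wr X$ statement to $X = C$, obtaining $\U C \wr C \subseteq \U C$. Since $\Av(1) = \emptyset$ cannot form part of any nontrivial splitting, the singleton permutation $1$ lies in $\U C$ for every nonempty class $C$, so inflating $1$ by any $\eta \in C$ yields $\eta \in \U C$; hence $\U C = C$ and $C$ is unsplittable by Observation~\ref{obs-core}(4). The main obstacle is the single-$\xi$ Ramsey step, where one must engineer the induced coloring on $\sigma^*$ so that both outcomes of $\pi$-unavoidability lift back to the desired monochromatic pattern in $\sigma$; once this step is available, the extension to tuples, the symmetric $X\wr$ case, and the wreath-closed corollary are essentially bookkeeping.
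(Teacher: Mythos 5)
Your proof is correct and follows essentially the same strategy as the paper's: the block-induced coloring (a skeleton element is red iff its block contains a red point) for the $\wr X$ case, the block-by-block assembly for the $X\wr$ case, and the observation that $1\in\U C$ together with closure under $\wr C$ for the wreath-closed conclusion. The only difference is cosmetic: where you iterate the single-$\xi$ step and then invoke heredity of $\U C$ to recover non-uniform inflations $\pi[\xi_1,\dotsc,\xi_n]$, the paper instead replaces $X$ by its wreath-closure and reduces directly to a uniform inflation by a single $\rho\in X$ containing all the $\rho_i$ --- both are valid reductions to the same core argument.
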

\begin{proof}              
We first prove that if $C$ is closed under $\wr X$, then so is $\U C$. 
Suppose $C$ is closed under $\wr X$. Note that we may assume that $X$ itself is
wreath-closed; this is because $\wr$ is associative, so if $C$ is closed under
$\wr X$, it is also closed under $\wr(X\wr X)$ and therefore it is closed under
$\wr Y$ where $Y$ is the wreath-closure of~$X$. 

Choose a permutation $\pi\in\U C$ of order $k$, and $k$ permutations 
$\rho_1,\dotsc,\rho_k\in X$. We wish to prove that 
$\pi[\rho_1,\dotsc,\rho_k]$ belongs to $\U C$. Without loss of generality, 
we assume that all $\rho_i$ are equal to a single permutation $\rho$; if 
not, we simply put $\rho\in X$ to be a permutation which contains all the
$\rho_i$ (such a $\rho$ exists, since $X$ is wreath-closed) and prove the
stronger fact that $\pi[\rho,\dotsc,\rho]$ belongs to $\U C$. 

Let us use the notation $\pi[\rho]$ as shorthand for $\pi[\rho,\dotsc,\rho]$. So
our goal now reduces to showing that $\pi[\rho]$ belongs to $\U C$ for every
$\pi\in\U C$ and $\rho\in X$. We base our argument on the second equivalent
definition from Observation~\ref{obs-unavo}. Fix a permutation
$\tau\in C$. We want to find a permutation $\sigma\in C$ such that each red-blue
coloring of $\sigma$ either contains a red copy of $\tau$ or a blue copy of 
$\pi[\rho]$. We already know that $\pi$ belongs to $\U C$, so there
is a permutation $\sigma'$ such that each red-blue coloring of $\sigma'$ has
either a red copy of $\tau$ or a blue copy of~$\pi$. Let $\ell$ be the 
order of $\sigma'$.

Define $\sigma$ by $\sigma=\sigma'[\rho]$, and view $\sigma$ as a concatenation
of $\ell$ blocks, each being a copy of~$\rho$. Fix an arbitrary red-blue
coloring of $\sigma$. We now define a red-blue coloring of $\sigma'$ as follows:
an element $\sigma'_i$ of $\sigma'$ is red if the $i$-th block in $\sigma$
has at least one red point, otherwise it is blue. Note that this coloring of
$\sigma'$ has the property that if $\sigma'$ contains a red copy of any pattern
$\beta$ then $\sigma$ also contains a red copy of $\beta$, and if $\sigma'$
contains a blue copy of $\beta$, then $\sigma$ has a blue copy of $\beta[\rho]$.
In particular, $\sigma$ either contains a red copy of $\tau$ or a blue
copy of~$\pi[\rho]$. This proves that $\pi[\rho]$ belongs to $\U C$, as
claimed.

We now show that if $C$ is closed under $X\wr$ then so is~$\U C$. Fix 
a permutation $\rho\in X$ of order $k$, and a $k$-tuple $\pi_1,\dotsc,\pi_k$
of permutations from $\U C$. We will show that $\rho[\pi_1,\dotsc,\pi_k]$
belongs to~$\U C$. 

Fix a permutation $\tau\in C$. Since $\pi_i$ is in $\U C$, there is a
permutation $\sigma_i\in C$ whose every red-blue coloring has either a red copy
of $\tau$ or a blue copy of~$\pi_i$. Define
$\sigma=\rho[\sigma_1,\dotsc,\sigma_k]$, viewing it as a union of $k$ blocks,
with the $i$-th block being a copy of~$\sigma_i$. Fix a red-blue coloring
of~$\sigma$. The $i$-th block of $\sigma$ either contains a red copy of $\tau$
or a blue copy of~$\pi_i$. Thus, if $\sigma$ has no red copy of $\tau$, it must
have a blue copy of~$\rho[\pi_1,\dotsc,\pi_k]$, showing that
$\rho[\pi_1,\dotsc,\pi_k]$ belongs to~$\U C$.

It remains to show that if $C$ is wreath-closed then $\U C=C$. But if $C$ is 
wreath-closed, then it is closed under $\wr C$, so $\U C$ is also closed under 
$\wr C$. But since $\U C$ is a nonempty subclass of $C$ by
Observation~\ref{obs-core}, this means that $\U C=C$. \end{proof}

\begin{corollary}\label{cor-indeco}
 Every wreath-closed permutation class is unsplittable. In particular, if
$\pi$ is a simple permutation then $\Av(\pi)$ is unsplittable.
\end{corollary}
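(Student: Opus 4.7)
The plan is to read off both statements of the corollary directly from the preceding lemma, combined with a standard characterization of wreath-closed principal classes that was already recalled in Subsection~\ref{ssec-basics}.

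For the first sentence, suppose $C$ is wreath-closed. The preceding lemma has already done the real work: its final paragraph shows that in this case $\U C = C$. Now invoke part~(4) of Observation~\ref{obs-core}, which states that $\U C = C$ is equivalent to $C$ being unsplittable. So $C$ is unsplittable, and no extra argument is needed.

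For the second sentence, I would appeal to the result of Albert and Atkinson recalled at the end of the ``Direct sums and inflations'' paragraph: a principal class $\Av(\pi)$ is wreath-closed if and only if $\pi$ is simple. Hence if $\pi$ is simple, then $\Av(\pi)$ is wreath-closed, and the first sentence of the corollary immediately yields that $\Av(\pi)$ is unsplittable.

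There is essentially no obstacle here; the corollary is a one-line consequence of the previous lemma together with the Albert--Atkinson characterization, and the only thing to be careful about is to cite these two facts explicitly rather than reprove them. In particular, one should not conflate ``wreath-closed'' in the sense $C \wr C \subseteq C$ with closure under $\wr X$ or $X \wr$ for some external $X$; the lemma is applied with $X = C$, which is exactly the hypothesis of wreath-closure.
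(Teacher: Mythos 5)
Your proposal is correct and matches the paper's (implicit) argument: the corollary is stated without separate proof precisely because the preceding lemma already concludes that $\U C = C$ for wreath-closed $C$, and the second sentence follows from the Albert--Atkinson characterization of wreath-closed principal classes recalled in Subsection~\ref{ssec-basics}. Nothing further is needed.
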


Not all unsplittable classes are wreath-closed. For instance, the class $C$ of
layered permutations is unsplittable; to see this, note that $C$ is closed under
$12\wr$ as well as under $\wr21$, and it is the smallest nonempty class with
these properties. Since $\U C$ has the same closure properties, we must have $\U
C=C$.

There is even an example of a principal class that is unsplittable even though
it is not wreath-closed, namely the class $\Av(132)$. To show that this class is
indeed unsplittable we need a more elaborate argument.

\begin{lemma}\label{lem-1p} Let $\rho$ be an indecomposable permutation.
 Let $C$ be the class $\Av(1\oplus\rho)$. Then, for any two permutations
$\pi$, $\pi'$ such that $\pi\in\U C$ and $1\oplus\pi'\in\U C$, 
we have $\pi\oplus\pi'\in\U C$.
\end{lemma}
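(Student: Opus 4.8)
The plan is to use the characterization of unavoidability from Observation~\ref{obs-unavo}(2): to show $\pi\oplus\pi'\in\U C$, I must take an arbitrary $\tau\in C$ and produce a $\sigma\in C$ such that every red-blue coloring of $\sigma$ contains a red copy of $\tau$ or a blue copy of $\pi\oplus\pi'$. Since $C=\Av(1\oplus\rho)$ with $\rho$ indecomposable, the class $C$ is closed under direct sums (a direct sum of two $(1\oplus\rho)$-avoiders still avoids the indecomposable pattern $1\oplus\rho$), so I am free to build $\sigma$ as a direct sum of smaller members of $C$.

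\textbf{The construction.}

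First I would apply the hypothesis $\pi\in\U C$ to $\tau$, obtaining a permutation $\alpha\in C$ every coloring of which yields a red $\tau$ or a blue $\pi$. Next, and this is the delicate point, I want to invoke the hypothesis $1\oplus\pi'\in\U C$; but I must feed it the right target permutation. The natural choice is to apply it to $\alpha\oplus\tau$ (or to $\alpha$ itself together with $\tau$, taking a joint embedding, which exists by atomicity), obtaining a permutation $\beta\in C$ every coloring of which has a red copy of $\alpha\oplus\tau$ or a blue copy of $1\oplus\pi'$. Then I would set $\sigma=\alpha\oplus\beta$, which lies in $C$. Now fix a red-blue coloring of $\sigma$. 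Restricting to the $\beta$-block: either it contains a red $\alpha\oplus\tau$, and in particular a red $\tau$, and we are done; or the $\beta$-block contains a blue copy of $1\oplus\pi'$. In the latter case, let $b$ be the blue point playing the role of the bottom ``$1$'' in this $1\oplus\pi'$, so that everything above-and-right of $b$ within $\beta$ contains a blue $\pi'$. Everything in the $\alpha$-block sits below and to the left of $b$. So now I look at the $\alpha$-block with its induced coloring: either it has a red $\alpha\oplus\tau$ inside the red copy of $\alpha$ — wait, more carefully: $\alpha$ has the property that every coloring gives a red $\tau$ or a blue $\pi$. If the $\alpha$-block yields a red $\tau$, done; otherwise it yields a blue $\pi$, and this blue $\pi$ lies entirely below-left of $b$, hence below-left of the blue $\pi'$. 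Concatenating the blue $\pi$ from the $\alpha$-block with the blue $\pi'$ above-right of $b$ gives a blue $\pi\oplus\pi'$ in $\sigma$, as required.

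\textbf{The main obstacle and how to handle it.}

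The subtle step is making sure the blue copy of $\pi'$ obtained from the ``$1\oplus\pi'$'' in the $\beta$-block genuinely sits above and to the right of \emph{all} of the $\alpha$-block, so that it can be prepended by a blue $\pi$ coming from $\alpha$. This is exactly why I introduced the auxiliary ``$1$'': the bottom point $b$ of the blue $1\oplus\pi'$ guarantees the $\pi'$-part uses only points with value exceeding that of $b$, and since $b$ belongs to the $\beta$-block, its value exceeds every value in the $\alpha$-block; position-wise the $\beta$-block is entirely to the right of the $\alpha$-block, so the $\pi'$-part is automatically to the right as well. The other point requiring a little care is why applying $1\oplus\pi'\in\U C$ to the target $\alpha\oplus\tau$ (rather than just $\tau$) is legitimate and why we may assume a red copy of $\alpha\oplus\tau$ inside $\beta$ lets us run the $\alpha$-argument: in fact we only need a red $\tau$ here, so applying it merely to $\tau$ suffices, and the $\alpha$-block argument is what supplies the blue $\pi$ when no red $\tau$ appears. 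Thus the roles are: $\beta$ provides the separating point $b$ (or a red $\tau$), and $\alpha$ provides the blue $\pi$ (or a red $\tau$); stacking them gives the blue $\pi\oplus\pi'$. Closure of $C$ under $\oplus$ is what makes $\sigma=\alpha\oplus\beta$ admissible, and that closure hinges precisely on $1\oplus\rho$ being indecomposable.
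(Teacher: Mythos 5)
Your construction collapses at the very first step: the claim that $C=\Av(1\oplus\rho)$ is closed under direct sums is false, and the justification you give for it (``a direct sum of two $(1\oplus\rho)$-avoiders still avoids the indecomposable pattern $1\oplus\rho$'') rests on a misreading — the hypothesis is that $\rho$ is indecomposable, whereas $1\oplus\rho$ is by definition decomposable. Concretely, take $\rho=21$, so $C=\Av(132)$ (this is exactly the case the lemma is used for in Proposition~\ref{pro-132}): then $1\in C$ and $21\in C$, but $1\oplus 21=132\notin C$. In general, whenever $\alpha$ is nonempty and $\beta$ contains $\rho$, the permutation $\alpha\oplus\beta$ contains $1\oplus\rho$; and your $\beta$ must contain $1\oplus\pi'$ (consider the all-blue coloring), so it will typically contain $\rho$. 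Hence $\sigma=\alpha\oplus\beta$ need not lie in $C$, and Observation~\ref{obs-unavo}(2) cannot be applied to it. This is not a cosmetic issue: the whole difficulty of the lemma is precisely that $\Av(1\oplus\rho)$ is not sum-closed, so one cannot place a copy of the $\pi$-gadget below and to the left of the $\pi'$-gadget by a naive direct sum.

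Your high-level plan — one gadget forcing a blue $\pi$, another forcing a blue $1\oplus\pi'$, with the bottom ``$1$'' serving as a separator that guarantees the blue $\pi'$ sits above and to the right of the blue $\pi$ — is exactly the right idea and matches the paper's proof. The missing ingredient is the correct way to combine the gadgets inside $C$: instead of forming $\alpha\oplus\beta$, inflate every LR-minimum of $\beta$ by a copy of $\alpha$. This stays in $C$ because $\rho$ is indecomposable and an LR-minimum has no element below and to its left, so no new copy of $1\oplus\rho$ can be created. Then, in a coloring with no red $\tau$, each inflated block supplies a blue $\pi$; the blue copy of $1\oplus\pi'$ (obtained by projecting the coloring back to $\beta$ with all LR-minima declared blue) has its $\pi'$-part consisting of covered elements of $\beta$, which are disjoint from the inflated blocks and lie above-right of a suitable block, yielding the blue $\pi\oplus\pi'$.
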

\begin{proof} Note that if $\sigma$ and $\sigma'$ are two permutations avoiding
the pattern $1\oplus \rho$, and if $\sigma''$ is obtained by inflating any
LR-minimum of $\sigma'$ by a copy of $\sigma$, then $\sigma''$ also
avoids~$1\oplus\rho$. This follows easily from the fact that $\rho$ is
indecomposable.

Let $\tau$ be any element of $C$. Our goal is to find a permutation $\sigma''\in
C$ whose every red-blue coloring has a red copy of $\tau$ or a blue copy of
$\pi\oplus\pi'$. Since $\pi\in \U C$, there is a $\sigma\in C$ such that 
every red-blue coloring of $\sigma$ has a red copy of $\tau$ or a blue copy of
$\pi$. Similarly, there is a $\sigma'\in C$ whose every red-blue coloring has a
red copy of $\tau$ or a blue copy of $1\oplus\pi'$. 

Let $\sigma''$ be the permutation obtained by inflating each LR-minimum of
$\sigma'$ by a copy of $\sigma$. Fix any red-blue coloring of $\sigma''$ that
has no red copy of $\tau$. Then every $\sigma$-block in $\sigma''$ contains a
blue copy of $\pi$. Consider a two-coloring of $\sigma'$ in which every
LR-minimum is blue, and all the other elements have the same color as the
corresponding elements in~$\sigma''$. This coloring contains a blue copy of
$1\oplus\pi'$. This means that $\sigma''$ has a blue copy of $\pi'$ which is
disjoint from all the $\sigma$-blocks obtained by inflating the LR-minima of
$\sigma'$. Combining this blue copy of $\pi'$ with a blue copy of $\pi$ in an
appropriate $\sigma$-block, we get a blue copy of $\pi\oplus\pi'$ in~$\sigma''$.
\end{proof}

\begin{proposition}\label{pro-132}
The class $\Av(132)$ is unsplittable.
\end{proposition}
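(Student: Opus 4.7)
The plan is to show $\U C = C$ for $C := \Av(132)$, which by Observation~\ref{obs-core} means $C$ is unsplittable. Since $132 = 1\oplus 21$ and the pattern $21$ is indecomposable, the class $C$ fits the hypothesis of Lemma~\ref{lem-1p}, and I will combine that lemma with two closure properties of $C$ that transfer to $\U C$ via the lemma preceding Corollary~\ref{cor-indeco}.

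First, $C$ is closed under skew sums (under $21\wr$), since any occurrence of $132$ in $\alpha\ominus\beta$ must lie entirely in $\alpha$ or entirely in $\beta$. Second, $C$ is closed under $\wr\Av(21)$: inflating a $132$-avoider by increasing blocks cannot create a $132$, because the descent ``$32$'' of any would-be $132$-occurrence cannot sit inside a single increasing block, and disjointness of value ranges in distinct blocks rules out the remaining configurations, forcing the three elements of the pattern to lie in three distinct blocks, which then project to a $132$ in the base permutation. Transferring both closures to $\U C$ and using $1\in \U C$, the trivial inflation $1[12]=12$ gives $12\in \U C$.

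Next I prove $\pi\in\U C$ for every $\pi\in C$ by strong induction on $n=|\pi|$. The cases $n\leq 1$ are immediate. For $n\geq 2$, let $k$ be the position of the maximum of $\pi$. The $132$-avoidance of $\pi$ forces every value in positions $1,\dotsc,k-1$ to exceed every value in positions $k+1,\dotsc,n$, so $\pi$ decomposes canonically as $\pi=(\pi_L\oplus 1)\ominus \pi_R$ with $\pi_L,\pi_R\in C$ of sizes $k-1$ and $n-k$. If $k<n$, then $A:=\pi_L\oplus 1$ (of size $k$) and $\pi_R$ (of size $n-k$) are both in $C$ and of size $<n$, so by induction they lie in $\U C$, and the closure of $\U C$ under $\ominus$ gives $\pi=A\ominus \pi_R\in\U C$.

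In the remaining case $k=n$, we have $\pi=\pi''\oplus 1$ with $\pi''\in C$ of size $n-1$. By induction $\pi''\in\U C$, and combined with $12=1\oplus 1\in\U C$, an application of Lemma~\ref{lem-1p} (with its two inputs set to $\pi''$ and $1$) yields $\pi=\pi''\oplus 1\in\U C$, closing the induction. The main obstacle is precisely establishing the base case $12\in\U C$: a direct Ramsey-style construction producing, for each $\tau\in C$, a $132$-avoider $\sigma$ whose every red/blue coloring forces a red copy of $\tau$ or a blue ascent looks awkward, and the role of the inflation-by-increasing closure is exactly to sidestep this difficulty.
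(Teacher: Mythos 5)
Your proof is correct and follows essentially the same route as the paper: induct on the position of the maximum, transfer the closure under skew sums and under inflation by increasing patterns to $\U C$ via the closure-transfer lemma, and handle the $\pi''\oplus 1$ case with Lemma~\ref{lem-1p} together with $12\in\U C$. The only differences are cosmetic (you invoke closure under $\wr\Av(21)$ where the paper uses the weaker $\wr 12$-closure, and you write the $k<n$ case as $(\pi_L\oplus 1)\ominus\pi_R$), so no further changes are needed.
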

\begin{proof} 
Let $C=\Av(132)$. We will show that $C$ is equal to $\U C$.
Pick a permutation $\pi\in C$, with $n\ge 2$.
Let $k$ be the index such that $\pi(k)=n$. If $k=n$, then $\pi$ can be written
as $\pi'\oplus 1$ for some $\pi'\in C$. If $k<n$, then all the elements
$\pi(k+1),\dotsc,\pi(n)$ are smaller than any element in $\pi(1),\dotsc,\pi(k)$,
otherwise we would find an occurrence of $132$. Consequently, $\pi$ can be
written as $\pi'\ominus\pi''$ for some $\pi', \pi''\in C$.

To show that every $\pi\in C$ belongs to $\U C$, proceed by induction. For
$\pi=1$ this is clear, for $\pi=12$, this follows from the fact that $C$ is
$\wr12$-closed. If $\pi$ is equal to $\pi'\ominus\pi''$, use the fact that $C$
is $\ominus$-closed. If $\pi=\pi'\oplus 1$, use Lemma~\ref{lem-1p} and the fact
that $12\in \U C$. 
\end{proof}

\section{Splittable classes and decomposable patterns}\label{sec-split}
We now focus on splittable permutation classes. We are again mostly
interested in principal classes. Let us begin by stating the main result of this
section.

\begin{theorem}\label{thm-split}
If $\pi$ is a decomposable permutation other than 12, 213 or 132, then
$\Av(\pi)$ is a splittable class. 
\end{theorem}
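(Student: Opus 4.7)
The plan is to exhibit, for each decomposable $\pi$ other than $12$, $132$, $213$, a nontrivial two-part splitting of $\Av(\pi)$. By Lemma~\ref{lem-simple}(d), it suffices to find $\rho_1, \rho_2 \in \Av(\pi)$ with $\Av(\pi) \subseteq \Av(\rho_1) \odot \Av(\rho_2)$.

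Write $\pi = \alpha \oplus \beta$ with both parts nonempty. The primary tool I deploy is Lemma~\ref{lem-ama}: set $\rho_1 = \alpha \oplus 1$ with distinguished singleton the final element of $\rho_1$, and $\rho_2 = 1 \oplus \beta$ with distinguished singleton the initial element of $\rho_2$. A $1$-amalgamation of these in some $\sigma \in \Av(\pi)$ would place, at a common image point $x \in \sigma$, an $\alpha$-copy entirely below-and-left of $x$ (from the embedding of $\rho_1$) and a $\beta$-copy entirely above-and-right of $x$ (from the embedding of $\rho_2$); together these form an $\alpha \oplus \beta = \pi$ in $\sigma$, a contradiction. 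By Lemma~\ref{lem-ama}, $\{\Av(\alpha \oplus 1), \Av(1 \oplus \beta)\}$ is then a splitting of $\Av(\pi)$, and it is nontrivial whenever $|\alpha|, |\beta| \geq 2$, since both $\rho_i$ are then proper subpatterns of $\pi$.

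Let $\pi = \pi^{(1)} \oplus \cdots \oplus \pi^{(m)}$ be the decomposition into indecomposable summands. A split with both sides of order at least $2$ is available unless one of the following holds: (i) $\pi = 12\cdots k$ with $k \geq 3$; (ii) $m = 2$ with a singleton summand, so $\pi = 1 \oplus \tau$ or $\pi = \tau \oplus 1$ with $\tau$ indecomposable of order $\geq 3$; (iii) $m = 3$ with both outer summands singletons, so $\pi = 1 \oplus \tau \oplus 1$ with $\tau$ indecomposable of order $\geq 2$. For case~(i), Dilworth's theorem yields $\Av(12\cdots k) \subseteq \Av(12) \odot \Av(12\cdots(k-1))$, a nontrivial splitting. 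For case~(iii), I would establish $\Av(\pi) \subseteq \Av(1 \oplus \tau) \odot \Av(\tau \oplus 1)$ by a direct coloring of $\sigma \in \Av(\pi)$ in the spirit of the Claesson--Jel\'{\i}nek--Steingr\'{\i}msson argument for $\pi = 1324$: the idea is to assign each element to red or blue by a greedy rule (or by a rule based on the structure of potential $\pi$-patterns around the element) and to verify by a short case analysis that the red subsequence avoids $1 \oplus \tau$ and the blue subsequence avoids $\tau \oplus 1$.

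Case~(ii) is where I expect the principal difficulty to sit, since the Lemma~\ref{lem-ama} construction degenerates when $|\alpha| = 1$: the part $1 \oplus \beta$ is then exactly $\pi$. My plan here is to invoke the circle-graph machinery developed in Subsection~\ref{ssec-circ}. To each $\sigma \in \Av(1 \oplus \tau)$ I associate a circle graph on the non-LR-minima of $\sigma$, whose edges encode witness conflicts for $\tau$-patterns located above the LR-minima. Avoidance of $1 \oplus \tau$ forces the clique number of this graph to be bounded in terms of $\tau$, and Gy\'arf\'as' theorem on circle graphs of bounded clique size then bounds the chromatic number by some $f(\tau)$. This yields a decomposition of the non-LR-minima into $f(\tau)$ color classes, each avoiding a specific proper subpattern of $\pi$; combined with the LR-minima (which form a decreasing, hence $12$-avoiding, sequence), this produces an $(f(\tau)+1)$-part splitting that collapses to two parts by Lemma~\ref{lem-simple}. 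The hard technical step is tuning the circle-graph encoding precisely enough that the clique bound places each color class strictly inside $\Av(\pi)$, rather than merely inside $\Av(1 \oplus \tau) = \Av(\pi)$ itself.
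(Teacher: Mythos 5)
Your reduction into cases is sensible and the easy cases are handled correctly: when $\pi=\alpha\oplus\beta$ with $|\alpha|,|\beta|\ge 2$, the failure of the $1$-amalgamation of $\alpha\oplus1$ and $1\oplus\beta$ (identifying the top element of the first with the bottom element of the second) does yield the nontrivial splitting $\{\Av(\alpha\oplus1),\Av(1\oplus\beta)\}$ via Lemma~\ref{lem-ama}; this is a legitimate alternative to the paper's route through Proposition~\ref{pro-cjs} and Corollary~\ref{cor-easysplit}. Your case (iii) is exactly the instance $\alpha=\gamma=1$, $\beta=\tau$ of Proposition~\ref{pro-cjs}, whose proof is the greedy coloring you allude to, so that sketch is recoverable. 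The genuine gap is case (ii), $\pi=1\oplus\tau$ with $\tau$ indecomposable of order at least three, which is precisely the case the paper spends Subsections~\ref{ssec-match} and~\ref{ssec-thmsplit} on. Your plan there rests on the claim that avoidance of $1\oplus\tau$ bounds the clique number of a circle graph attached to the covered elements of $\sigma$, so that Gy\'arf\'as applies. For the natural such graph --- the intersection graph of the reduced envelope matching $R(\sigma)$ --- this is false: a clique of size $N$ there corresponds to a copy of $1\oplus\J_N$ in $\sigma$, and for, say, $\tau=2413$ the permutation $1\oplus\J_N$ avoids $1\oplus\tau$ for every $N$, so the clique number is unbounded on $\Av(1\oplus\tau)$. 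The clique/chromatic correspondence of Proposition~\ref{pro-circ} works only for $\tau=\J_n$; for general indecomposable $\tau$, what $R(\sigma)$ avoids is a fixed connected matching pattern $\ma\tau$, not a clique, and no $\chi$-boundedness theorem covers that. You acknowledge that ``tuning the encoding'' is the hard step, but that tuning is the entire content of the theorem in this case, and no candidate encoding with a provable clique bound is offered.

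For comparison, the paper's argument for case (ii) is not a Gy\'arf\'as-type coloring at all. It performs a breadth-first layering of the intersection graph of $R(\sigma)$ from the arc containing the leftmost endpoint, splits each layer $L_i$ into $L_i^+$ and $L_i^-$ according to the direction in which a chosen parent arc crosses, and shows that every block of $L_i^{\pm}$ avoids a matching $M^{\pm}$ obtained from $\ma\tau$ by shortening one extreme arc (Claims~\ref{cla-blockx} and~\ref{cla-mplusx}); connected supermatchings $N^{\pm}\supseteq M^{\pm}$ that still avoid $\ma\tau$ then give a four-part splitting of $\MAv(\ma\tau)$ (Lemma~\ref{lem-split}), and Lemma~\ref{lem-perm} converts this matching splitting into a splitting of $\Av(1\oplus\tau)$ into principal permutation classes. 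No bound on the clique number is used anywhere. So your proposal is incomplete exactly where the theorem is hard.
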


The exclusion of 12, 213 and 132 in the statement of the theorem is necessary,
since it follows from the results of the previous section that $\Av(12)$,
$\Av(213)$ and $\Av(132)$ are unsplittable.

As the first step towards the proof of Theorem~\ref{thm-split},
we deal with patterns that are decomposable into (at least) three
parts. 

\begin{proposition}\label{pro-cjs}
 Let $\alpha$, $\beta$ and $\gamma$ be three nonempty permutations. The class
$\Av(\alpha\oplus\beta\oplus\gamma)$ is splittable, and more precisely, it
satisfies
\[
 \Av(\alpha\oplus\beta\oplus\gamma)\subseteq\Av(\alpha\oplus\beta)\m
\Av(\beta\oplus\gamma).
\]
\end{proposition}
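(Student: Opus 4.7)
I would exhibit, for each $\pi\in\Av(\alpha\oplus\beta\oplus\gamma)$, an explicit two-coloring of the elements of $\pi$ such that the red elements avoid $\alpha\oplus\beta$ and the blue elements avoid $\beta\oplus\gamma$. The coloring I would try first is the natural one: color $\pi(i)$ blue if the set $\{\pi(j):j<i,\ \pi(j)<\pi(i)\}$ (the ``lower-left quadrant'' of $\pi(i)$) contains a copy of $\alpha$, and red otherwise.

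Showing that the red part avoids $\alpha\oplus\beta$ is straightforward. If $A\oplus B$ is a red occurrence, then the leftmost element $\pi(j_0)$ of $B$ lies strictly to the right of and strictly above the entirety of $A$ (since $A$ is left of $B$ in positions and below $B$ in values), so the lower-left quadrant of $\pi(j_0)$ contains $A\cong\alpha$. Thus $\pi(j_0)$ should be blue, contradicting its membership in the red part.

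The harder step is to show that the blue part avoids $\beta\oplus\gamma$. Assume toward contradiction that $B\oplus G$ is a blue occurrence. Let $\pi(i_0)$ be the leftmost element of $B$. Its blueness yields an $\alpha$-copy $A$ with every position $<i_0$ and every value $<\pi(i_0)=B(1)$. Since $A$ lies strictly left of $B$ and $B$ lies strictly left of $G$, and since $B<G$ in values and $A<B(1)$, I would try to show that $A\cup B\cup G$ is an occurrence of $\alpha\oplus\beta\oplus\gamma$ in $\pi$, contradicting the hypothesis. When $B(1)=\min B$ (equivalently, when $\beta$ begins with its minimum), the value comparison $A<B(1)=\min B$ already gives $A<B$ elementwise and the conclusion is immediate.

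The main obstacle is the general case, in which $\beta$ need not start with its minimum: then $B$ contains elements of value smaller than $B(1)$ at positions to the right of $i_0$, and $A$'s values are only known to sit below $B(1)$, not below $\min B$. The naive combination $A\cup B\cup G$ then need not form $\alpha\oplus\beta\oplus\gamma$. I would handle this by instead choosing the \emph{minimum-valued} element $\pi(i^*)\in B$ as the witness, whose blueness produces an $\alpha$-copy $A^*$ with $A^*<\pi(i^*)=\min B$ and hence below every element of~$B$. This fixes the value side but introduces a positional complication, since $A^*$ lies at positions $<i^*$ rather than $<i_0$, so it may interleave with the elements of $B$ strictly between $i_0$ and $i^*$. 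The technical core of the proposition is to resolve this interleaving -- either by selecting the $\alpha$-witness as far left as possible, or by iterating the blueness argument on successive elements of $B$ -- and to extract, in every positional configuration, a clean copy of $\alpha\oplus\beta\oplus\gamma$ from $A^*\cup B\cup G$, contradicting~$\pi\in\Av(\alpha\oplus\beta\oplus\gamma)$.
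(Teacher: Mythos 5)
Your red-side argument is fine, but the coloring you chose does not work, and the ``technical core'' you defer is not a technicality: it is fatal for that coloring. Concretely, take $\alpha=1$, $\beta=21$, $\gamma=1$, so $\alpha\oplus\beta\oplus\gamma=1324$, and consider $\pi=34125$, which avoids $1324$. Under your rule (blue iff the lower-left quadrant contains a copy of $\alpha$, i.e.\ iff the element is not an LR-minimum), the blue elements are $4,2,5$, which form an occurrence of $213=\beta\oplus\gamma$. Moreover neither of your proposed repairs helps here: the only $\alpha$-witness for the minimum blue element $2$ of the $\beta$-copy is the element $1$, which sits to the \emph{right} of $4$, so it cannot be pushed further left; and iterating on $4$ produces the witness $3$, which is not below $2$. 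So with this coloring there is simply no copy of $\alpha\oplus\beta\oplus\gamma$ to extract, and the proof cannot be completed along these lines.

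The paper avoids this by using a different, genuinely order-dependent coloring, built greedily from left to right with two rules: an element is colored blue if (i) coloring it red would complete a red occurrence of $\alpha\oplus\beta$, or (ii) it lies above and to the right of an already blue element; otherwise it is red. Rule (ii) --- the upward-rightward closure of the blue set --- is the idea missing from your proposal. It has two effects. First, tracing back from the minimum element $\pi(a)$ of a hypothetical blue copy of $\beta\oplus\gamma$ along rule (ii) leads to a blue element $\pi(b)$ with $b\le a$ and $\pi(b)\le\pi(a)$ that was blued by rule (i), yielding a copy of $\alpha\oplus\beta$ ending at $\pi(b)$ whose other elements are red and which lies entirely below the blue $\beta$-copy. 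Second, rule (ii) forces the red elements of that $\alpha\oplus\beta$ to stay below the leftmost element $\pi(c)$ of the blue $\beta$-copy whenever they lie to its right, which is exactly what resolves the positional interleaving you ran into: in either case one assembles a copy of $\alpha\oplus\beta\oplus\gamma$ (using either the blue $\beta$-copy or the red one), a contradiction. In short, your quadrant coloring must be replaced by the two-rule greedy coloring; the closure rule is not an optimization but the step that makes the blue-side argument possible.
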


We note that a weaker version of Proposition~\ref{pro-cjs} (with an extra
assumption that $\beta$ has the form $\beta'\ominus 1$ for some $\beta'$) has
been recently proved by Claesson, Jelínek and Steingr\'imsson~\cite[Theorem
3]{cjs}. The proof we present below is actually a simple adaptation of the
argument from~\cite{cjs}.

\begin{proof}[Proof of Proposition~\ref{pro-cjs}] Fix a permutation $\pi$
avoiding the pattern $\alpha\oplus\beta\oplus\gamma$. We will color the elements
of $\pi$ red and blue, so that the red elements will avoid $\alpha\oplus\beta$
and the blue ones will avoid $\beta\oplus\gamma$. We construct the coloring by
taking the elements $\pi(1)$ to $\pi(n)$ successively, and having colored the
elements $\pi(1),\dotsc,\pi(i-1)$ for some $i$, we determine the color of
$\pi(i)$ by these rules:
\begin{itemize}
 \item If coloring $\pi(i)$ red completes a red occurrence of
$\alpha\oplus\beta$, color $\pi(i)$ blue.
 \item If for some $j<i$ the element $\pi(j)$ is blue and $\pi(j)<\pi(i)$, color
$\pi(i)$ blue.
 \item Otherwise color $\pi(i)$ red.
\end{itemize}
Clearly, the coloring determined by these rules avoids a red copy of
$\alpha\oplus\beta$. We now show that it also avoids a blue copy of
$\beta\oplus\gamma$. Suppose for contradiction that $\pi$ has a blue occurrence
of $\beta\oplus\gamma$, and let $\beta_B$ and $\gamma_B$ denote the occurrences
of $\beta$ and $\gamma$ in this blue occurrence of $\beta\oplus\gamma$, with
$\beta_B$ being completely to the left and below~$\gamma_B$. 

Let $\pi(a)$ be the smallest element of $\beta_B$. Since $\pi(a)$ is blue, $\pi$
must have a blue element $\pi(b)$ such that $b\le a$, $\pi(b)\le \pi(a)$, and
changing the color of $\pi(b)$ from blue to red would create a red copy of
$\alpha\oplus\beta$ in the sequence $\pi(1),\pi(2),\dotsc,\pi(b)$. In other
words,
$\pi$ has a copy of $\alpha\oplus\beta$ whose rightmost element is $\pi(b)$ and
whose remaining elements are all red. Let $\alpha_R$ and $\beta_R$ denote the
copies of $\alpha$ and $\beta$ in this copy of $\alpha\oplus\beta$, with
$\alpha_R$ to the left and below $\beta_R$. 

Note that every element of $\alpha_R$ is smaller than $\pi(b)$, and 
therefore every element of $\alpha_R$ is smaller than all the elements
of~$\beta_B$. Let $\pi(c)$ be the leftmost element of $\beta_B$.
If every element of $\alpha_R$ is to the left of $\pi(c)$, then
$\alpha_R\cup\beta_B\cup\gamma_B$ is a copy of $\alpha\oplus\beta\oplus\gamma$
in $\pi$, which is impossible. Thus at least one element of $\alpha_R$ is to the
right of $\pi(c)$, and consequently, all the elements of $\beta_R$ are to the
right of~$\pi(c)$. It follows that all the red elements in $\beta_R$ are smaller
than $\pi(c)$, for otherwise they would be colored blue by the second rule of
our coloring. This means that $\alpha_R\cup\beta_R\cup\gamma_B$ is a copy of
$\alpha\oplus\beta\oplus\gamma$, a contradiction.
\end{proof}

\begin{corollary}\label{cor-easysplit}
 If $\alpha$ and $\beta$ are permutations of order at least two, then the class
$\Av(\alpha\oplus\beta)$ is splittable.
\end{corollary}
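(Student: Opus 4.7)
The plan is to reduce Corollary~\ref{cor-easysplit} directly to Proposition~\ref{pro-cjs} by inserting a singleton between $\alpha$ and $\beta$. The key observation is that the permutation $\alpha\oplus 1\oplus\beta$ contains $\alpha\oplus\beta$ as a subpermutation (drop the middle element), so every $(\alpha\oplus\beta)$-avoider automatically avoids the larger pattern:
\[
\Av(\alpha\oplus\beta)\subseteq\Av(\alpha\oplus 1\oplus\beta).
\]

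Next I would apply Proposition~\ref{pro-cjs} to the three nonempty summands $\alpha$, $1$, $\beta$ (in that order), which immediately gives
\[
\Av(\alpha\oplus 1\oplus\beta)\subseteq\Av(\alpha\oplus 1)\m\Av(1\oplus\beta).
\]
Chaining the two inclusions yields the candidate two-part splitting $\{\Av(\alpha\oplus 1),\Av(1\oplus\beta)\}$ of $\Av(\alpha\oplus\beta)$.

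It remains to verify nontriviality in the sense of Lemma~\ref{lem-simple}(b), i.e., that neither part is a superclass of $\Av(\alpha\oplus\beta)$. The permutation $\alpha\oplus 1$ itself settles the first part: the hypothesis $|\beta|\ge 2$ gives $|\alpha\oplus 1|<|\alpha\oplus\beta|$, so $\alpha\oplus 1$ belongs to $\Av(\alpha\oplus\beta)$ while it obviously fails to avoid the pattern $\alpha\oplus 1$. The symmetric check, using $|\alpha|\ge 2$, handles $\Av(1\oplus\beta)$ via the witness $1\oplus\beta$.

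I do not anticipate any real obstacle here; the whole argument is essentially a one-line packaging of Proposition~\ref{pro-cjs}, and the hypotheses $|\alpha|,|\beta|\ge 2$ enter only in the final properness step, which is precisely why they appear in the statement. As a sanity check, the excluded small patterns $12$, $213$, $132$ (shown unsplittable in Section~\ref{sec-unsplit}) all have at least one summand of size~$1$ and therefore lie outside the scope of this argument, matching the hypothesis exactly.
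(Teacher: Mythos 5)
Your proposal is correct and follows essentially the same route as the paper: embed $\Av(\alpha\oplus\beta)$ into $\Av(\alpha\oplus 1\oplus\beta)$ and apply Proposition~\ref{pro-cjs} to get the splitting $\{\Av(\alpha\oplus 1),\Av(1\oplus\beta)\}$. Your explicit verification of nontriviality (which the paper leaves implicit) is accurate and correctly uses the hypotheses $|\alpha|,|\beta|\ge 2$.
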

\begin{proof}
 We see that $\Av(\alpha\oplus\beta)\subseteq\Av(\alpha\oplus 1\oplus\beta)$,
and Proposition~\ref{pro-cjs} shows that 
\[
 \Av(\alpha\oplus 1\oplus\beta)\subseteq\Av(\alpha\oplus 1)\m\Av(1\oplus\beta).
\]
Therefore, $\Av(\alpha\oplus\beta)$ admits the splitting $\{\Av(\alpha\oplus
1),\Av(1\oplus\beta)\}$.
\end{proof}

To prove Theorem~\ref{thm-split}, it remains to deal with the classes
$\Av(\alpha\oplus\beta)$, where $\alpha$ or $\beta$ has order one. As the two
cases are symmetric, we may assume that $\alpha=1$. We may also assume that
$\beta$ is indecomposable, because otherwise $\Av(1\oplus\beta)$
is splittable by Proposition~\ref{pro-cjs}. Finally, we may assume that $\beta$
has order at least three, since we already know that $\Av(12)$ and $\Av(132)$
are unsplittable. Let us therefore focus on the classes $\Av(1\oplus\sigma)$,
where $\sigma$ is an indecomposable permutation of order at least three. 

To handle these
`hard' cases of Theorem~\ref{thm-split}, we will introduce the notion of ordered
matchings, and study the splittings of hereditary classes of matchings.
Matchings are more general structures than permutations, in the sense that the
containment poset of permutations is a subposet of the containment poset of
matchings. Moreover, the arguments we use in our proof can be more naturally
presented in the terminology of matchings, rather than permutations. The
downside is that we will have to introduce a lot of basic terminology related
to matchings.

In Subsection~\ref{ssec-match}, we will introduce matchings and describe how
they relate to permutations. Next, in Subsection~\ref{ssec-thmsplit}, we present
the proof of Theorem~\ref{thm-split}. Although the proof is constructive, the
splittings we construct in the proof involve classes defined by avoidance of
rather large patterns. Such splittings do not seem to reveal much information
about the classes being split. For this reason, Subsection~\ref{ssec-expl}
gives another splitting algorithm, which is less general, but which provides
more natural splittings involving avoiders of small patterns. Using this
alternative splitting approach, we establish, in Subsection~\ref{ssec-circ}, an
equivalence between the existence of certain permutation splittings and the
colorability of circle graphs of bounded clique size. This allows us to use
previous bounds on the chromatic number of clique-avoiding circle graphs to
deduce both positive and negative results about existence of permutation
splittings.

\subsection{Ordered matchings}\label{ssec-match}

Let $P=\{x_1<x_2<\dotsb<x_{2n}\}$ be a set of $2n$ real numbers, represented by points
on the real line. A \emph{matching} (or, more properly, an \emph{ordered
perfect matching}) on the point set $P$ is a set $M$ of pairs of points from~$P$,
such that every point of $P$ belongs to exactly one pair from~$M$. The
elements of $M$ are the \emph{arcs} of the matching $M$, while the elements of
$P$ are the \emph{endpoints} of~$M$. 

We represent an arc $\alpha$ of a matching $M$ by an ordered pair $(a,b)$ of
points, and we make the convention that $a$ is left of~$b$. The points $a$ and
$b$ are referred to as the \emph{left endpoint} and \emph{right endpoint} of
$\alpha$, respectively, and denoted by $\lend\alpha$ and $\rend\alpha$.
We visualize the arcs as half-circles connecting the two
endpoints, and situated in the upper half-plane above the real
line.

If $M$ is a matching on a point set $\{x_1<x_2<\dotsb<x_{2n}\}$ and  $N$ is a
matching on a point set $P'=\{y_1<y_2<\dotsb<y_{2n}\}$, we say that $M$ and $N$
are \emph{isomorphic}, written as $M\cong N$, if for every $i,j$ we have the
equivalence $(x_i,x_j)\in M \iff (y_i,y_j)\in N$. A matching $M$ \emph{contains}
a matching $N$, if it has a subset $M'\subseteq M$ such that the matching $M'$
is isomorphic to~$N$. If $M$ does not contain $N$, we say that $M$ \emph{avoids}
$N$. Let $\MAv(N)$ be the set of the isomorphism classes of matchings that
avoid~$N$.

Let $M$ be a matching, and let $\alpha=(a,b)$ and $\beta=(c,d)$ be two arcs
of~$M$. We say that \emph{$\alpha$ crosses $\beta$ from the left} if $a<c<b<d$,
and we say that \emph{$\alpha$ is nested below $\beta$} if $c<a<b<d$. We say
that a point $x\in\bbR$ is \emph{nested below the arc $\alpha$} if $a<x<b$
(note that $x$ does not have to be an endpoint of $M$). We say that two arcs 
are \emph{in series} if they neither cross nor nest, which means that one of
them is completely to the left of the other. We say that an arc $\alpha$ of a
matching $M$ is \emph{short} if its endpoints are adjacent, i.e., if there is
no endpoint of $M$ nested below~$\alpha$. An arc that is not short is
\emph{long}.

We define the \emph{directed intersection graph} of $M$, denoted by $\dint M$,
to be the graph whose vertices are the arcs of $M$, and $\dint M$ has a directed
edge from $\alpha$ to $\beta$ if $\alpha$ crosses $\beta$ from the left. The
\emph{intersection graph} of $M$, denoted by $\uint M$, is the graph
obtained from $\dint M$ by omitting the orientation of its edges. Note that two
arcs of $M$ are adjacent in $\uint M$ if and only if the half-circles
representing the two arcs intersect. We say that a matching is \emph{connected}
if its intersection graph is a connected graph. 

Let us remark that the graphs that arise as intersection graphs of matchings are
known as \emph{circle graphs} in graph theory literature. We refer the reader to
the surveys~\cite{brand,spinrad} for more information on this graph class.

Let $M$ and $N$ be matchings with $m$ and $n$ arcs, respectively. We let
$M\uplus N$ denote the matching $Q$ which is a disjoint union of two matchings
$Q_1$ and $Q_2$, such that $Q_1\cong M$, $Q_2\cong N$, and any endpoint of $Q_1$
is to the left of any endpoint of~$Q_2$. This determines $M\uplus N$ uniquely up
to isomorphism. We say that a matching $M$ is \emph{$\uplus$-indecomposable} if
it cannot be written as $M\cong M_1\uplus M_2$ for some nonempty matchings $M_1$
and $M_2$. Note that a connected matching is $\uplus$-indecomposable, but the
converse is not true in general. Any matching $M$ can be uniquely written as
$M\cong M_1\uplus M_2\uplus\dotsb\uplus M_k$ where each $M_i$ is a nonempty
$\uplus$-indecomposable matching. We call the matchings $M_i$ the \emph{blocks}
of~$M$.

We say that a matching $M$ is \emph{merged} from two matchings $M_1$ and $M_2$,
if the arcs of $M$ can be colored red and blue so that the red arcs form a
matching isomorphic to $M_1$ and the blue ones are isomorphic to~$M_2$. Given
this concept of merging, we may speak of splittability of matching classes in
the same way as we do in the case of permutations.

\begin{figure}
 \hfil\includegraphics[width=\textwidth]{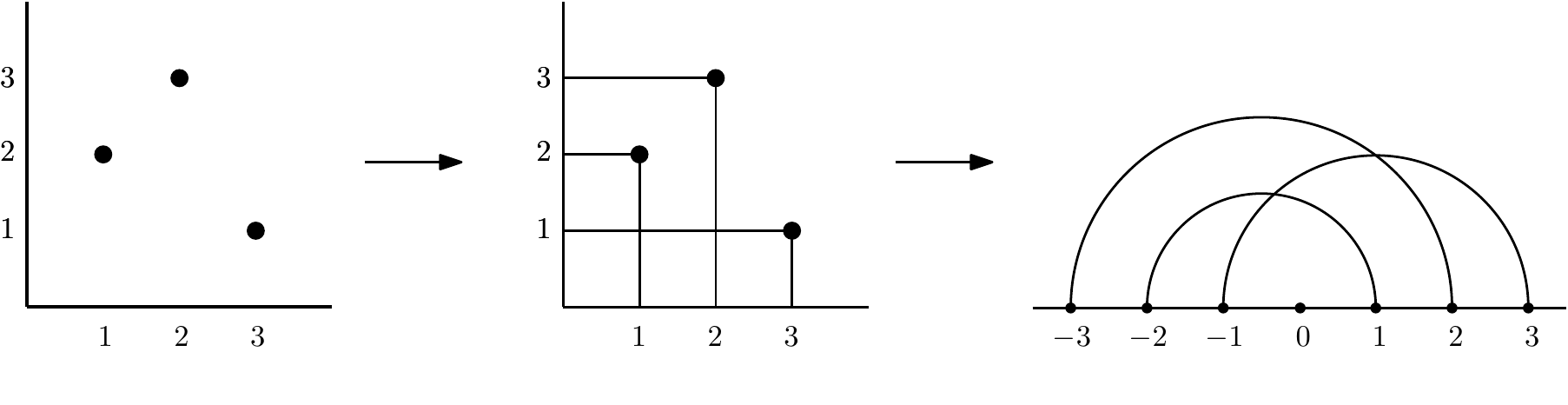}
 \caption{Transforming a permutation $\pi=231$ into the matching
$\ma\pi$.}\label{fig-mpi}
\end{figure}

We now introduce two distinct ways in which a permutation can be encoded by a
matching. Let $\pi$ be a permutation. We let $\ma\pi$ denote the matching on the
point set $\{-n,-n+1,\dotsc,-1\}\cup\{1,2,\dotsc,n\}$ which contains the arc of
the form $(-\pi(i),i)$ for each $i\in[n]$. As shown in Figure~\ref{fig-mpi}, we
may visualize the permutation matching $\ma\pi$ by taking the diagram of $\pi$,
connecting each point of the diagram to the two coordinate axes by a horizontal
and vertical segment, and then deforming the figure so that the pairs of segments
become half-circles. Clearly, the matching $\ma\pi$ is $\uplus$-indecomposable.
It can also be easily verified that $\ma\pi$ is connected if and only if the
permutation $\pi$ is indecomposable. If $C$ is a set of permutations, we let
$\ma C$ denote the set $\{\ma\pi\;|\; \pi\in C\}$. 

We say that a matching $M$ is a \emph{permutation matching} if $M\cong \ma\pi$
for a permutation~$\pi$. 

\begin{observation} For a matching $M$, the following properties are equivalent:
\begin{enumerate}
\item $M$ is a permutation matching.
\item Any left endpoint of $M$ is to the right of any right endpoint.
\item There is a point $x\in\bbR$ nested below all the arcs of~$M$.
\item $M$ has no two arcs in series.
\end{enumerate}
\end{observation}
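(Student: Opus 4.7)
The plan is to prove the cycle of implications $(1)\Rightarrow(2)\Rightarrow(3)\Rightarrow(4)\Rightarrow(1)$. Each step is a short geometric observation, and the only one with any real content is $(4)\Rightarrow(1)$, where I must reconstruct an actual permutation from the abstract matching data. The common thread is that condition~(2) really just asserts that the two endpoint-types of $M$ are separated on the real line, and conditions~(3) and~(4) are reformulations of the same picture.

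For $(1)\Rightarrow(2)$ I would unpack the definition of $\ma\pi$: by construction its left endpoints occupy the coordinates $\{-n,\dotsc,-1\}$ while its right endpoints occupy $\{1,\dotsc,n\}$, so the two endpoint-types are separated as required. For $(2)\Rightarrow(3)$, let $x$ be any real number strictly between the rightmost left endpoint and the leftmost right endpoint of $M$; for every arc $\alpha=(a,b)$ we then have $a<x<b$, so $x$ is nested below $\alpha$. Implication $(3)\Rightarrow(4)$ is immediate: if $x$ is nested below both arcs $\alpha=(a,b)$ and $\beta=(c,d)$, then $x\in(a,b)\cap(c,d)$, so neither of the two intervals lies entirely to the left of the other, and hence the arcs cannot be in series.

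For $(4)\Rightarrow(1)$ I would first re-derive $(2)$ as an auxiliary step by contrapositive: if some arc $\beta$ has its left endpoint strictly to the right of the right endpoint of some arc $\alpha$, then $\alpha$ lies completely to the left of $\beta$, so the two arcs are in series, contradicting $(4)$. With $(2)$ in hand I list the left endpoints of $M$ in increasing order as $p_1<\dotsb<p_n$ and the right endpoints as $q_1<\dotsb<q_n$. The matching pairs each $p_i$ with a unique $q_{\sigma(i)}$ for some $\sigma\in S_n$, and setting $\pi(j)=n+1-\sigma^{-1}(j)$ then yields a permutation with $M\cong\ma\pi$, via the order-preserving identification sending $p_i\mapsto -(n+1-i)$ and $q_j\mapsto j$.

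I do not expect any real obstacle: every step is a direct translation between arc geometry and endpoint coordinates, and the whole argument should fit in a few lines. The only care is needed in the final construction, where the abstract bijection between left and right endpoints must be reconciled with the particular indexing convention used to define $\ma\pi$.
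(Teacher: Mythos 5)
Your proof is correct, and since the paper states this observation without proof, your cycle $(1)\Rightarrow(2)\Rightarrow(3)\Rightarrow(4)\Rightarrow(1)$ is precisely the routine verification the paper leaves implicit; in particular your explicit construction in $(4)\Rightarrow(1)$, pairing $p_i$ with $q_{\sigma(i)}$ and setting $\pi(j)=n+1-\sigma^{-1}(j)$, does check out against the definition of $\ma{\pi}$ as the matching with arcs $(-\pi(i),i)$. One remark: you tacitly read condition (2) as ``every left endpoint lies to the \emph{left} of every right endpoint,'' which is the intended meaning --- the paper's literal wording (``to the right'') is evidently a typo, since as written it would fail for every nonempty matching.
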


Permutation matchings form a hereditary class within the class
of all matchings.

\begin{observation}\label{obs-match}
A permutation $\pi$ contains a permutation $\sigma$ if and only if the matching
$\ma\pi$ contains $\ma\sigma$. A permutation $\pi$ can be merged from
permutations $\sigma_1, \dotsc,\sigma_k$ if and only if the matching $\ma\pi$
can be merged
from matchings $\ma\sigma_1,\dotsc,\ma\sigma_k$. Hence,
$\Av(\pi)$ is splittable if and only if $\ma{\Av(\pi)}$~is.
\end{observation}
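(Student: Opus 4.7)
The plan is to build everything on the canonical bijection
\[
\phi_\pi \colon I \mapsto \{(-\pi(i),i) : i \in I\}
\]
between subsets $I \subseteq [n]$ and subsets of arcs of $\ma\pi$. First I would verify the core fact: for $I = \{i_1 < \dots < i_m\} \subseteq [n]$ and $\sigma \in S_m$, the subsequence $\pi(i_1),\dots,\pi(i_m)$ is order-isomorphic to $\sigma$ if and only if $\phi_\pi(I) \cong \ma\sigma$. This is immediate from unwinding the definitions: the $2m$ points underlying $\phi_\pi(I)$ are the $m$ negative values $-\pi(i_k)$ followed by the $m$ positive values $i_k$, and the arc set matches the arc set of $\ma\sigma$ under the position-preserving identification precisely when the relative order of $\pi(i_1),\dots,\pi(i_m)$ equals that of $\sigma(1),\dots,\sigma(m)$. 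This at once yields the first claim.

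For the merging claim, the same bijection transports any $k$-coloring of the elements of $\pi$ to a $k$-coloring of the arcs of $\ma\pi$, by assigning each arc $(-\pi(i),i)$ the color of $\pi(i)$. Applying the first claim color-class by color-class, the elements of color $c$ form an occurrence of $\sigma_c$ in $\pi$ if and only if the arcs of color $c$ form a submatching of $\ma\pi$ isomorphic to $\ma{\sigma_c}$.

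For the splittability statement, the key preliminary observation is that the class of permutation matchings is hereditary: by the characterization listed in the preceding observation, any submatching of a permutation matching still has every left endpoint to the right of every right endpoint, and hence is itself a permutation matching. Consequently, whenever $\ma\pi$ is merged from matchings $M_1$ and $M_2$, both are automatically of the form $\ma{\rho_1}$ and $\ma{\rho_2}$ for some permutations $\rho_1,\rho_2$, and by the merging claim $\pi$ itself is merged from $\rho_1$ and $\rho_2$. The proof of Lemma~\ref{lem-simple} uses only heredity of the involved classes together with formal properties of the merging operation $\m$, so it applies verbatim to hereditary classes of matchings; in particular, $\ma{\Av(\pi)}$ is splittable iff it admits a nontrivial splitting of the form $\{\MAv(\ma{\sigma_1}),\MAv(\ma{\sigma_2})\}$ with $\sigma_1,\sigma_2 \in \Av(\pi)$. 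Combining this with the first two claims, such a matching splitting exists iff $\{\Av(\sigma_1),\Av(\sigma_2)\}$ is a nontrivial splitting of $\Av(\pi)$, closing the equivalence. The main subtle point is making sure that proper containment is preserved under the hat map in both directions, which is exactly handled by the fact that every submatching of a permutation matching is a permutation matching.
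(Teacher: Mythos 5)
Your proposal is correct: the paper states this as an unproved Observation (treating it as immediate from the definitions), and your argument is exactly the routine verification the authors have in mind — the arc/element bijection for containment and merging, heredity of permutation matchings, and the transfer of Lemma~\ref{lem-simple} to matching classes. The only blemish is the repeated (harmless) slip that left endpoints of a permutation matching lie to the \emph{right} of right endpoints; they of course lie to the left, which is what heredity actually uses.
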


We now introduce another, less straightforward way to encode a permutation by a
matching.

\begin{figure}
\hfil \includegraphics[width=\textwidth]{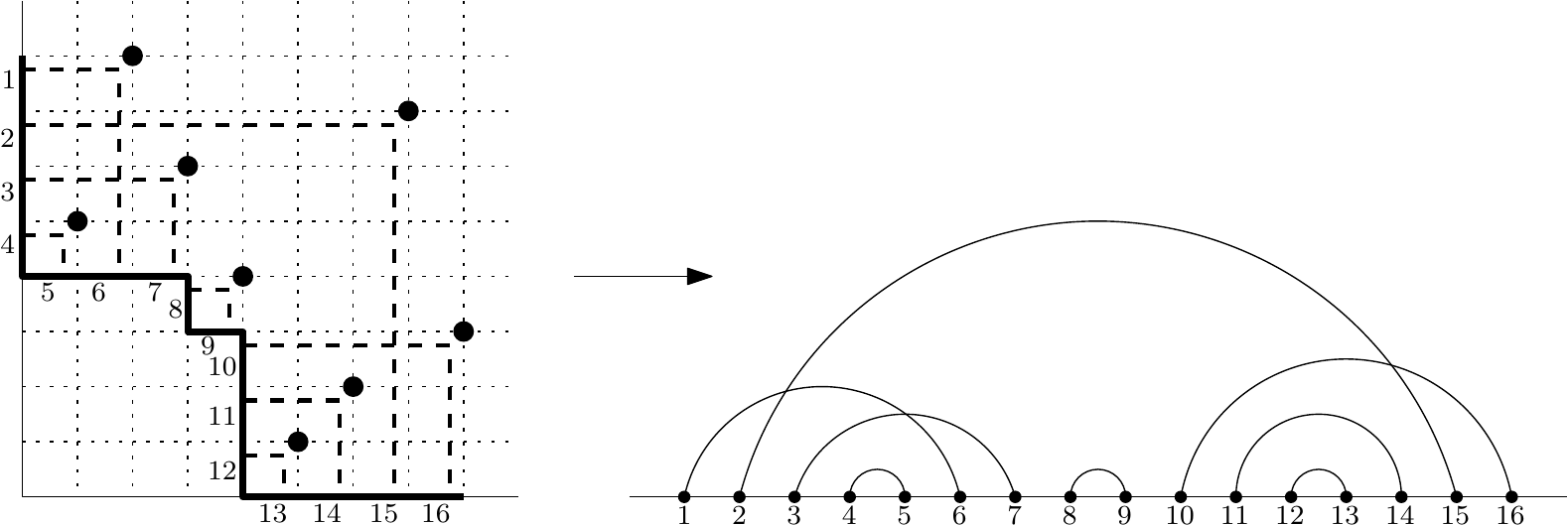}
\caption{The diagram of the permutation $\pi =58641273$, together with its
envelope $P$, represented as the thick line with numbered steps. The dashed
lines represent the arcs of the envelope
matching $E(\pi)$.}\label{fig-envelope}
\end{figure}

\begin{definition}
 Let $\pi$ be a permutation of order $n$. The \emph{envelope} of $\pi$ is a
directed lattice path $P_\pi$ with these properties (see
Fig.~\ref{fig-envelope}):
\begin{itemize}
 \item $P_\pi$ connects the point $(0,n)$ to the point $(n,0)$,
\item every step of $P_\pi$ is either a \emph{down-step} connecting a point
$(i,j)$ to a point $(i,j-1)$, or a \emph{right-step} connecting $(i,j)$ with
$(i+1,j)$, and
\item for every $i\in [n]$, the path $P_\pi$ contains the right step
$(i-1,j)\to(i,j)$ where $j=\min\{\pi(1),\dotsc,\pi(i)\}-1$.
\end{itemize}
\end{definition}
 
Note that all the points of the form $(i,\pi(i))$ are strictly above the
envelope $P_\pi$, and $P_\pi$ is the highest non-increasing lattice path with
this property. For any $i\in[n]$, the \emph{down-step in row $i$} of
$P_\pi$ is the (unique) down-step of the form $(k,i)\to(k,i-1)$ for some $k$,
and the \emph{right-step in column $i$} is the right-step of the form
$(i-1,k)\to(i,k)$.

Note also that $P_\pi$ contains a down-step $(i-1,j)\to(i-1,j-1)$ followed by a
right-step $(i-1,j-1)\to(i,j-1)$ if and only if $\pi(i)$ is an LR-minimum of
$\pi$ and is equal to~$j$.

\begin{definition}\label{def-ematch}
Let $\pi$ be a permutation with envelope $P_\pi$. The \emph{envelope
matching} of $\pi$, denoted by $E(\pi)$, is the matching on the point set
$[2n]$ determined by these rules:
\begin{enumerate}
 \item Label the steps of $P_\pi$ by $\{1,2,\dotsc,2n\}$, in the order in which
they are encountered when $P_\pi$ is traversed from $(0,n)$ to $(n,0)$. 
 \item For every $i$ and $j$ such that $\pi(i)=j$, suppose that the down-step in
row $j$ has label $a$ and the right-step in column $i$ has label $b$. Then
the arc $(a,b)$ belongs to~$E(\pi)$. (Note that we must have $a<b$, since the
point $(i,j)$ is above the path $P_\pi$.)
\end{enumerate}
\end{definition}

Note that each arc of $E(\pi)$ corresponds in an obvious way to an element
of~$\pi$, and the short arcs of $E(\pi)$ correspond precisely to the LR-minima
of~$\pi$.

\begin{observation}\label{obs-ematch}
Suppose that $\pi\in S_n$ is a permutation. Let $\pi(i)$ and
$\pi(j)$ be two elements of $\pi$, and let $\alpha_i$ and $\alpha_j$
be the two corresponding arcs of $E(\pi)$. Then $\rend{\alpha_i}<\rend{\alpha_j}$
if and only if $i<j$, and $\lend{\alpha_i}<\lend{\alpha_j}$ if and only if 
$\pi(i)>\pi(j)$. It follows that $\alpha_i$ is nested below $\alpha_j$ if and 
only if $\pi(i)$ covers $\pi(j)$.

Moreover, if $i<j$ and $\pi(i)>\pi(j)$, then the arcs $\alpha_i$ and
$\alpha_j$ are crossing if and only if $\pi$ has an LR-minimum that covers both
$\pi(i)$ and~$\pi(j)$, otherwise they are in series.
\end{observation}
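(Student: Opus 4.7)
The proof is essentially a direct bookkeeping exercise on the envelope path $P_\pi$, which is monotone (weakly increasing in $x$, weakly decreasing in $y$) from $(0,n)$ to $(n,0)$. Consequently, right-steps are encountered in order of their column index and down-steps in order of decreasing row. Since $\rend{\alpha_i}$ is the label of the right-step in column $i$, this gives $\rend{\alpha_i}<\rend{\alpha_j}\iff i<j$; since $\lend{\alpha_i}$ is the label of the down-step in row $\pi(i)$, this gives $\lend{\alpha_i}<\lend{\alpha_j}\iff \pi(i)>\pi(j)$. The nesting claim follows immediately: $\alpha_i$ is nested below $\alpha_j$, i.e.\ $\lend{\alpha_j}<\lend{\alpha_i}<\rend{\alpha_i}<\rend{\alpha_j}$, iff $i<j$ and $\pi(i)<\pi(j)$, which is exactly the definition of $\pi(i)$ covering $\pi(j)$.

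For the crossing-versus-series dichotomy, fix $i<j$ with $\pi(i)>\pi(j)$. By the above, $\lend{\alpha_i}<\lend{\alpha_j}$ and $\rend{\alpha_i}<\rend{\alpha_j}$, so the arcs cross iff $\lend{\alpha_j}<\rend{\alpha_i}$, and are in series iff $\lend{\alpha_j}>\rend{\alpha_i}$. Set $M_i=\min\{\pi(1),\dotsc,\pi(i)\}$. By the definition of $P_\pi$, the right-step in column $i$ lies at height $M_i-1$. The down-step in row $\pi(j)$ carries the path from height $\pi(j)$ to $\pi(j)-1$, so it has been traversed by the time the right-step in column $i$ is reached iff the path's height is already $\le\pi(j)-1$ at column $i$, i.e.\ iff $M_i\le \pi(j)$. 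Hence $\alpha_i$ and $\alpha_j$ cross iff $M_i\le\pi(j)$ and are in series iff $M_i>\pi(j)$.

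It remains to rewrite the condition $M_i\le \pi(j)$ as "some LR-minimum of $\pi$ covers both $\pi(i)$ and $\pi(j)$". Assume $M_i\le\pi(j)$ (equivalently $<\pi(j)$, since all values of $\pi$ are distinct), and let $k$ be the least index with $k\le i$ and $\pi(k)<\pi(j)$. Every earlier $\pi(m)$ with $m<k$ exceeds $\pi(j)$, hence exceeds $\pi(k)$, so $\pi(k)$ is an LR-minimum of $\pi$; since $\pi(k)<\pi(j)<\pi(i)$ we have $k<i<j$, and $\pi(k)$ covers both $\pi(i)$ and $\pi(j)$. Conversely, any LR-min covering both must have index $<i$ and value $<\pi(j)$, forcing $M_i\le\pi(j)$. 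The only moderately delicate step in the whole argument is the identification of $M_i$ as the quantity controlling whether the down-step in row $\pi(j)$ has yet been reached by column $i$; once this is unpacked from the envelope definition, the LR-minimum translation is a routine check.
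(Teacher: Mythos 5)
Your proof is correct, and it is precisely the direct unpacking of Definitions of the envelope and of $E(\pi)$ that the paper has in mind: the paper states this as an unproved Observation, treating it as immediate from the monotonicity of $P_\pi$ and the labelling of its steps. Your identification of the crossing condition with $M_i\le\pi(j)$ and its translation into the existence of a covering LR-minimum fills in the only nontrivial detail correctly.
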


Observation~\ref{obs-ematch} implies that $E(\pi)$
determines $\pi$ uniquely, and if $\sigma$ and $\pi$ are permutations such
that $E(\sigma)$ contains $E(\pi)$, then $\sigma$ contains~$\pi$. Note however,
that the converse of this last fact does not hold in general: the permutation
$132$ contains $21$, while $E(132)=\{(1,5),(2,6),(3,4)\}$ does not contain
$E(21)=\{(1,2),(3,4)\}$.

\begin{lemma}\label{lem-envelope}
 For a matching $M$ on the point set $[2n]$, the following
statements are equivalent.
\begin{enumerate}
 \item There is a permutation $\pi$ such that $M=E(\pi)$.
 \item For any left endpoint $a$ of $M$, if $a+1$ is a right endpoint of $M$,
then $(a,a+1)$ is an arc of~$M$.
 \item If $(a,b)$ and $(c,d)$ are two arcs of $M$ such that
$a<c<b<d$, then $b \neq c+1$.
\end{enumerate}
\end{lemma}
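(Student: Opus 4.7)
The plan is to verify $(1)\Rightarrow(2)$, the contrapositive equivalence $(2)\Leftrightarrow(3)$, and the constructive reverse implication $(2)\Rightarrow(1)$.

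For $(1)\Rightarrow(2)$, I would use that in $E(\pi)$ the left endpoints (read left to right) correspond to the down-steps of $P_\pi$ and the right endpoints to the right-steps. A position $a$ being a left endpoint with $a+1$ a right endpoint therefore means $P_\pi$ takes a down-step followed by a right-step; by the definition of the envelope, this happens precisely when $\pi(i)$ is an LR-minimum and $i$ is the column of the following right-step, with the preceding down-step being the one in row $\pi(i)$. By Definition~\ref{def-ematch} the arc assigned to $\pi(i)$ connects exactly these two steps, so $(a,a+1)\in E(\pi)$. The equivalence $(2)\Leftrightarrow(3)$ is a direct contrapositive: a failure of $(2)$ exhibits a left endpoint $a$ and a right endpoint $a+1$ lying in separate arcs $(a,x)$ and $(y,a+1)$ with $y<a<a+1<x$, which immediately violates $(3)$; conversely, arcs $(a,b),(c,d)$ with $a<c<b<d$ and $b=c+1$ provide a left endpoint $c$ and a right endpoint $c+1$ attached to different arcs, contradicting $(2)$.

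For $(2)\Rightarrow(1)$ my approach is constructive. Given $M$ on $[2n]$ satisfying $(2)$, list its left endpoints as $\ell_1<\dotsb<\ell_n$ and its right endpoints as $r_1<\dotsb<r_n$. Reading the positions $1,\dotsc,2n$ from left to right and inserting a down-step for each left endpoint and a right-step for each right endpoint produces a lattice path $P$ from $(0,n)$ to $(n,0)$. For each arc $(\ell_u,r_v)\in M$ define $\pi(v):=n-u+1$; since $M$ is a perfect matching, $\pi$ is a bijection on $[n]$. It remains to show $P=P_\pi$, because then the labels of the down-step in row $\pi(v)=n-u+1$ (the $u$-th down-step, namely $\ell_u$) and of the right-step in column $v$ (namely $r_v$) match, giving $E(\pi)=M$.

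The identity $P=P_\pi$ reduces to the claim that for every $i$ the right-step in column $i$ of $P$ lies at row $\min\{\pi(1),\dotsc,\pi(i)\}-1$. Writing $u_k$ for the left-endpoint index of the arc ending at $r_k$, so $\pi(k)=n-u_k+1$, this is equivalent to $\max\{u_1,\dotsc,u_i\}=r_i-i$. The upper bound $u_k\le r_i-i$ for $k\le i$ is clear, since each $\ell_{u_k}$ lies among the $r_i-i$ left endpoints preceding $r_i$. For the lower bound I would induct on $i$: when $r_i=r_{i-1}+1$ the inductive hypothesis yields $\max\{u_1,\dotsc,u_{i-1}\}=r_{i-1}-(i-1)=r_i-i$; when $r_i>r_{i-1}+1$ the positions $r_{i-1}+1,\dotsc,r_i-1$ are all left endpoints, so $\ell_{r_i-i}=r_i-1$, and condition $(2)$ applied at $a=r_i-1$ forces $(r_i-1,r_i)\in M$, giving $u_i=r_i-i$. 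This inductive step, where $(2)$ is used precisely to guarantee that a gap between consecutive right endpoints cannot ``misplace'' the arc at $r_i$, is the main technical point of the argument.
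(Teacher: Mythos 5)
Your proposal is correct and follows essentially the same route as the paper: the equivalence of (2) and (3) and the implication (1)$\Rightarrow$(2) are handled the same way, and for (2)$\Rightarrow$(1) you build exactly the same lattice path (down-steps at left endpoints, right-steps at right endpoints) and the same permutation from the arcs. The only difference is in verifying that this path is the envelope of $\pi$: the paper argues tersely via the corner/LR-minimum characterization of the envelope, while you check the defining $\min$-formula directly by induction on the right endpoints, which is a more explicit but equivalent verification.
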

\begin{proof}
One may easily observe that the second and third property are equivalent, and
that the first property implies the second one. We now show that the second
property implies the first. Let $M$ be a matching satisfying the
second property. Construct a lattice path $P$ from $(0,n)$ to $(n,0)$,
consisting of down-steps and right-steps of unit length, where the $a$-th step
of $P$ is a down-step if and only if the point $a$ is a left endpoint of the
matching~$M$. We label the steps of $P$ from 1 to $2n$ in the order in which
they appear on~$P$. In this way, the endpoint $a$ of $M$ corresponds naturally
to the $a$-labelled step of~$P$. 

We construct a permutation $\pi$ as follows: for every arc $(a,b)$ of $M$,
let $i$ be the column containing the $b$-labelled step of $P$ (which
is a right-step) and let $j$ be the row containing the $a$-labelled step of $P$
(which is a down-step). We then define~$\pi(i)=j$. 

By construction, all elements of $\pi$ are above the path~$P$. Moreover, since
$M$ satisfies the second property, we see that whenever a down-step labelled
$a$ is directly followed by a right-step labelled $a+1$, then $(a,a+1)$ is an
arc, and the permutation $\pi$ has an LR-minimum in the column containing step
$a+1$ and row containing step~$a$. Hence $P$ is the envelope of
$\pi$, and it follows that $M=E(\pi)$.
\end{proof}
Any matching satisfying the conditions of Lemma~\ref{lem-envelope} will be
referred to as an \emph{envelope matching}. Notice that in an envelope matching,
every long arc has a short arc nested below it.

Let $R(\pi)$ denote the submatching of $E(\pi)$ formed by the long arcs
of~$E(\pi)$. The matching $R(\pi)$ no longer determines the permutation $\pi$
uniquely (e.g., both $R(12)$ and $R(213)$ consist of a single arc). We call
$R(\pi)$ the \emph{reduced envelope matching} of~$\pi$. The next lemma shows
how these concepts are related to avoidance of patterns of the
form~$1\oplus\sigma$.

\begin{lemma}\label{lem-rpi}
For any permutations $\sigma$ and $\pi$, the matching $R(\pi)$ contains
$\ma\sigma$ if and only if $\pi$ contains $1\oplus\sigma$. Moreover, suppose
that
$\{\sigma_1,\sigma_2,\dotsc,\sigma_k\}$ is a multiset of permutations such that
\[
 R(\pi)\in \MAv(\ma\sigma_1)\m\MAv(\ma\sigma_2)\m\dotsb\m\MAv(\ma\sigma_k).
\]
Then 
\[
 \pi\in\Av(1\oplus\sigma_1)\m\Av(1\oplus\sigma_2)\m
\dotsb\m\Av(1\oplus\sigma_k).
\]
In particular, if $\MAv(\ma\sigma)$ has a splitting
$\{\MAv(\ma\sigma_1),\dotsc,\MAv(\ma\sigma_k)\}$, then 
$\Av(1\oplus\sigma)$ has a splitting
$\{\Av(1\oplus\sigma_1),\dotsc,\Av(1\oplus\sigma_k)\}$.
\end{lemma}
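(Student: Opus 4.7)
The plan is to first establish the biconditional $R(\pi)\supseteq\ma{\sigma}\iff\pi\supseteq 1\oplus\sigma$; the ``moreover'' part and the splitting corollary will then fall out by a colouring argument. Both directions of the biconditional should be read off from Observation~\ref{obs-ematch}, which relates nesting and crossing of arcs of $E(\pi)$ to the covering relation among elements of~$\pi$, combined with the fact (noted just after Definition~\ref{def-ematch}) that the short arcs of $E(\pi)$ are precisely those corresponding to LR-minima.

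For the $(\Leftarrow)$ direction, suppose $\pi$ has an occurrence of $1\oplus\sigma$ at positions $i_0<i_1<\dots<i_n$. Replacing $\pi(i_0)$, if necessary, by the element $\pi(p)$ achieving $\min\{\pi(1),\dots,\pi(i_0)\}$ (an LR-minimum with value at most $\pi(i_0)$, and therefore still below every $\pi(i_k)$), we may assume $\pi(i_0)$ itself is an LR-minimum, so $\alpha_{i_0}$ is a short arc of~$E(\pi)$. Since $\pi(i_0)$ covers every $\pi(i_k)$, Observation~\ref{obs-ematch} yields that $\alpha_{i_0}$ is nested below each $\alpha_{i_k}$; in particular each $\alpha_{i_k}$ is long and therefore lies in $R(\pi)$. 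The two endpoints of $\alpha_{i_0}$ witness a common point nested under every arc of $\{\alpha_{i_1},\dots,\alpha_{i_n}\}$, forcing any two arcs of this set to cross or nest rather than lie in series, so the submatching is a permutation matching whose isomorphism type, again by Observation~\ref{obs-ematch}, is exactly $\ma{\sigma}$.

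For the $(\Rightarrow)$ direction, suppose $\alpha_{i_1},\dots,\alpha_{i_n}$ with $i_1<\dots<i_n$ are arcs of $R(\pi)$ whose submatching is isomorphic to~$\ma{\sigma}$. Observation~\ref{obs-ematch} at once gives that $\pi(i_1),\dots,\pi(i_n)$ forms a copy of $\sigma$ in $\pi$, so it suffices to produce an LR-minimum $\pi(p)$ of $\pi$ with $p<i_1$ and $\pi(p)<j^{*}:=\min_k\pi(i_k)$. Assume for contradiction that no such LR-minimum exists, so $m:=\min\{\pi(1),\dots,\pi(i_1-1)\}\ge j^{*}$. Let $k^{*}$ be the index for which $\pi(i_{k^{*}})=j^{*}$. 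If $k^{*}=1$, then $\pi(i_1)=j^{*}\le m$, contradicting the fact that $\alpha_{i_1}\in R(\pi)$ is long and hence $\pi(i_1)$ is covered, i.e.\ $m<\pi(i_1)$. If $k^{*}\ge 2$, then $(i_1,i_{k^{*}})$ is an inversion of $\pi$ with $\pi(i_1)>\pi(i_{k^{*}})=j^{*}$, and by Observation~\ref{obs-ematch} the arcs $\alpha_{i_1},\alpha_{i_{k^{*}}}$ cross only when some LR-minimum of $\pi$ covers both of them. Any such LR-minimum must sit at a position $<i_1$ with value $<j^{*}$, which is precisely what we ruled out, so $\alpha_{i_1}$ and $\alpha_{i_{k^{*}}}$ are in series, contradicting their membership in a common permutation submatching. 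Prepending the LR-minimum so obtained to $\pi(i_1),\dots,\pi(i_n)$ yields the desired occurrence of $1\oplus\sigma$ in $\pi$. I expect this to be the main obstacle: recovering an honest LR-minimum of $\pi$ that dominates the entire submatching from the mere permutation-matching structure in $R(\pi)$, through the series-versus-crossing dichotomy.

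For the ``moreover'' part, given a colouring of $R(\pi)$ with $k$ colours in which the $i$-th colour class avoids $\ma{\sigma_i}$, colour $\pi$ by assigning to each covered element the colour of its arc and giving every LR-minimum a fixed colour, say colour~$1$. If the $i$-th colour class of $\pi$ were to contain an occurrence $\pi(i_0),\pi(i_1),\dots,\pi(i_n)$ of $1\oplus\sigma_i$, then each $\pi(i_k)$ with $k\ge 1$ would be covered by $\pi(i_0)$, so $\alpha_{i_k}\in R(\pi)$ would carry colour~$i$; applying the $(\Leftarrow)$ direction of the biconditional to this sub-occurrence produces a copy of $\ma{\sigma_i}$ in $R(\pi)$ all of whose arcs are coloured~$i$, contradicting the colouring hypothesis. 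The splitting consequence is then immediate: any $\pi\in\Av(1\oplus\sigma)$ satisfies $R(\pi)\in\MAv(\ma{\sigma})$ by the biconditional, and the splitting $\{\MAv(\ma{\sigma_1}),\dots,\MAv(\ma{\sigma_k})\}$ of $\MAv(\ma{\sigma})$ transfers via the colour map just described to a splitting of $\Av(1\oplus\sigma)$ into $\{\Av(1\oplus\sigma_1),\dots,\Av(1\oplus\sigma_k)\}$.
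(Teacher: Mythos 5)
Your proof is correct and takes essentially the same approach as the paper: both directions of the equivalence are extracted from Observation~\ref{obs-ematch} (short arcs $=$ LR-minima, nesting $=$ covering, crossing vs.\ series governed by a common covering LR-minimum), and the merge statement is transferred by giving each covered element the color of its arc in $R(\pi)$. The only minor divergence is in the direction $R(\pi)\supseteq\ma\sigma\Rightarrow\pi\supseteq 1\oplus\sigma$, where the paper produces a short arc of $E(\pi)$ nested below the whole copy via Lemma~\ref{lem-envelope}, whereas you recover the required LR-minimum on the permutation side from the crossing-versus-series dichotomy together with the fact that long arcs correspond to covered elements; both justifications are sound.
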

\begin{proof}
Suppose that $R(\pi)$ contains $\ma\sigma$, and let $M\subseteq R(\pi)$ be a 
submatching of $R(\pi)$ isomorphic to~$\ma\sigma$.  From 
Observation~\ref{obs-ematch} and Lemma~\ref{lem-envelope}, we may easily deduce
that $E(\pi)$ has a short arc $\beta$ nested below all the arcs of~$M$. It
follows that $M\cup\{\beta\}$ is a copy of $\ma{1\oplus\sigma}$ in $E(\pi)$, and
hence $\pi$ contains~$1\oplus\sigma$. 

To prove the converse, suppose that $\pi$ has a subsequence
$S=s_0,s_1,\dotsc,s_n$ order-isomorphic to $1\oplus\sigma$. Let $\alpha_i$
denote the arc of $E(\pi)$ representing the element $s_i$ of~$\pi$. By
Observation~\ref{obs-ematch}, $\alpha_0$ is nested below all the arcs
$\alpha_1,\dotsc,\alpha_n$. Therefore, the arcs $\alpha_1,\dotsc,\alpha_n$ all
belong to $R(\pi)$, and no two of them are in series. Thus,
$\alpha_1,\dotsc,\alpha_n$ form a copy of $\ma\sigma$ in~$R(\pi)$.

Suppose now that $\pi$ is a permutation such that
$R(\pi)\in \MAv(\ma\sigma_1)\m\MAv(\ma\sigma_2)\m\dotsb\m\MAv(\ma\sigma_k)$.
Color the arcs of $R(\pi)$ by $k$ colors $c_1,\dotsc,c_k$ in such a way
that the arcs colored by $c_i$ avoid~$\ma\sigma_i$. We transfer this
$k$-coloring to the covered elements of $\pi$, by assigning to
each covered element the color of the corresponding arc of~$R(\pi)$ (recall
that a \emph{covered element} of a permutation is an element which is not a
LR-minimum). 

Let $\pi_i$ denote the subpermutation of $\pi$ formed by all the LR-minima of
$\pi$ together with all the covered elements whose color is~$c_i$. Note that
$R(\pi_i)$ is isomorphic to the submatching of $R(\pi)$ consisting of the arcs
of color~$c_i$. Hence $R(\pi_i)$ avoids $\ma\sigma_i$, and by first part of the
lemma, $\pi_i$ avoids~$1\oplus\sigma_i$. We deduce that
\[
\pi\in\Av(1\oplus\sigma_1)\m\Av(1\oplus\sigma_2)\m
\dotsb\m\Av(1\oplus\sigma_k).
\]
From this, the last claim of the lemma follows easily.
\end{proof}

\subsection{Proof of Theorem~\ref{thm-split}}\label{ssec-thmsplit}
We are ready to prove Theorem~\ref{thm-split}. Recall that the goal is to show
that if $\pi$ is a decomposable pattern different from $12$,
$132$ or $213$, then $\Av(\pi)$ is splittable. From 
Corollary~\ref{cor-easysplit}, we already know that we may restrict our
attention to the cases when $\pi$ has the form $1\oplus\sigma$ for an
indecomposable permutation~$\sigma$ of order at least three.

As we have seen,
a permutation $\sigma$ may be uniquely represented by an envelope
matching~$E(\sigma)$. However, we have also seen that the containment order of
permutations does not coincide with the containment order of the corresponding
envelope matchings. Our first goal will be to describe permutation containment
in terms of envelope matchings.

\begin{definition}
 Let $M$ be a matching with $n$ arcs, and let $I$ be an open interval on the
real line. The \emph{tangling of $M$ in $I$} is an operation which produces
a matching $N$ with $n+1$ arcs, defined as follows. First, we reorder the
endpoints of $M$ belonging $I$ in such a way that all the left endpoints
in $I$ appear to the left of all the right endpoints, while the relative
position of the left endpoints as well as the relative position of the right
endpoints remains the same. Next, we create a new short arc $\alpha$ 
whose endpoints belong to $I$, and which is nested below all the other arcs
that have at least one endpoint in~$I$. Let $N$
be the resulting matching. 
\end{definition}

See the right part of Figure~\ref{fig-tangle} for an example of a matching
$E(\tau)$ obtained by tangling a matching $E(\rho)$.

Note that if $M$ is an envelope matching, then any tangling of $M$ is again an
envelope matching.

\begin{figure}
\includegraphics[width=\textwidth]{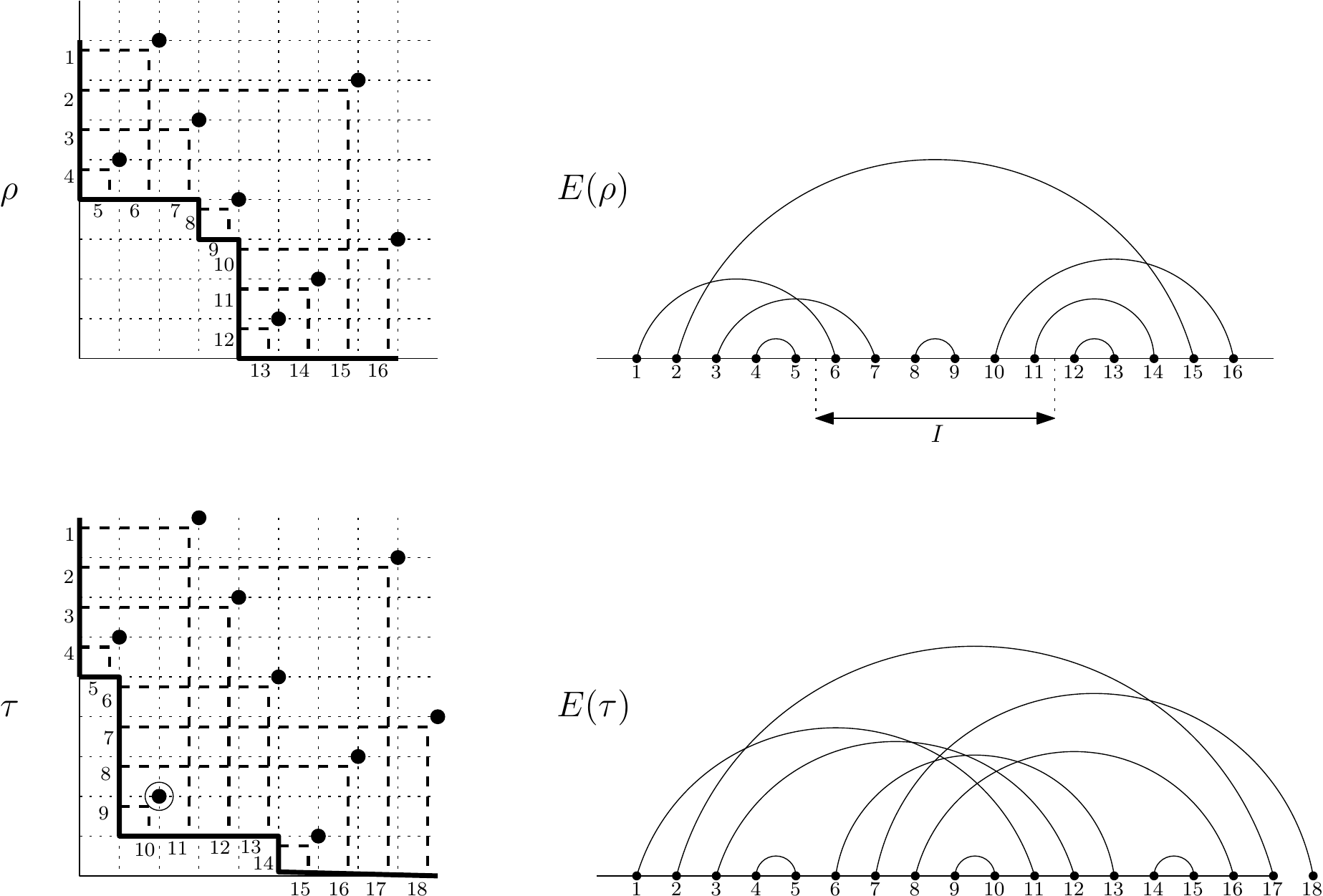}
 \caption{Illustration of Lemma~\ref{lem-tangle}: a permutation $\tau$ is
obtained by a inserting a new LR-minimum $\tau(2)$ into a permutation~$\rho$
(the new element is circled in the permutation diagram of~$\tau$). The envelope
matching $E(\tau)$ is then obtained from $E(\rho)$ by
tangling in the interval~$I$.}\label{fig-tangle}
\end{figure}

\begin{lemma}\label{lem-tangle}
Let $\tau\in S_n$ and $\rho\in S_{n-1}$
be two permutations. Suppose that $\tau$ contains $\rho$, and fix an index
$a\in[n]$ such that $\rho$ is order-isomorphic to $\tau\setminus\{\tau(a)\}$.
If $\tau(a)$ is not an LR-minimum of $\tau$, then the matching $E(\rho)$ is
contained in $E(\tau)$, and more precisely, $E(\rho)$ is obtained from
$E(\tau)$ by removing the long arc representing~$\tau(a)$. If, on the other
hand, $\tau(a)$ is an LR-minimum of $\tau$, then $E(\tau)$ can be created
from $E(\rho)$ by tangling in such a way that the short arc inserted by the
tangling operation represents~$\tau(a)$.
\end{lemma}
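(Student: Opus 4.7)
The plan is to split on whether $\tau(a)$ is an LR-minimum of $\tau$, and in each case compare $E(\tau)$ with $E(\rho)$ directly through their envelope paths $P_\tau$ and $P_\rho$, using the step-label description of arcs given in Definition~\ref{def-ematch}.

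For Case~1 ($\tau(a)$ covered), the key observation is that removing a covered element does not alter which of the remaining elements are LR-minima. Setting $j=\tau(a)$, let $s$ be the label of the down-step in row $j$ of $P_\tau$ and $t$ the label of the right-step in column $a$, so that the arc representing $\tau(a)$ in $E(\tau)$ is $(s,t)$; since $\tau(a)$ is not an LR-minimum, these two steps are not adjacent, so $(s,t)$ is long. I would then show that the sequence of the remaining $2n-2$ steps of $P_\tau$, after the natural reindexing of rows (delete row $j$) and columns (delete column $a$) forced by the reduction to $\rho$, agrees step by step with $P_\rho$. The unique order-preserving bijection $[2n]\setminus\{s,t\}\to[2n-2]$ then transports every surviving arc of $E(\tau)$, via Definition~\ref{def-ematch}, onto the corresponding arc of $E(\rho)$.

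For Case~2 ($\tau(a)$ an LR-minimum with value $j$), the arc for $\tau(a)$ in $E(\tau)$ is the short arc $(s,s+1)$, where $s$ and $s+1$ now label, respectively, the down-step in row $j$ and the immediately following right-step in column $a$; together they form a corner of $P_\tau$ that is absent from $P_\rho$. The subtlety is that removing $\tau(a)$ can promote certain covered elements of $\tau$ (namely those at positions $i>a$ with values strictly between $j$ and $\min\{\tau(1),\dotsc,\tau(a-1)\}$) to LR-minima of $\rho$, so the arcs in $E(\rho)$ representing these elements are short arcs that must become long in $E(\tau)$. I would take the tangling interval $I$ to be an open interval of the position line of $E(\rho)$ that contains the locus where the new short arc $(s,s+1)$ is to be inserted, together with the right endpoints of precisely those short arcs of $E(\rho)$ that lengthen in passing to $E(\tau)$, and the left endpoints of the short arcs that take over as running minima in $\tau$. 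The reordering prescribed by the tangling---all left endpoints in $I$ first, then all right endpoints, with internal orders preserved---is exactly the permutation of step-labels one obtains by comparing $P_\rho$ to $P_\tau$ across the newly inserted corner, and the short arc produced by the tangling falls into the unique middle gap reserved for $(s,s+1)$.

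The principal obstacle is the bookkeeping in Case~2: one must verify that the reordering of endpoints inside $I$ matches, step for step, the insertion of the corner $(a-1,j-1)$ into the envelope, and that each transported endpoint receives the step-label dictated by Definition~\ref{def-ematch} applied to the appropriate element of $\tau$. Once this correspondence is in place, the tangled matching is recognized as $E(\tau)$ by virtue of the characterization of envelope matchings in Lemma~\ref{lem-envelope}, which completes the proof.
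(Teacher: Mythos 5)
Your Case~1 argument is fine: it amounts to checking that deleting a covered element deletes one down-step and one right-step from the envelope path and leaves the rest of the staircase intact, which is the same content the paper extracts directly from Observation~\ref{obs-ematch}. The gap is in Case~2, and it is exactly at the point you defer as ``bookkeeping'': the specification of the tangling interval $I$ is wrong. The arcs that the tangling must affect are the arcs of \emph{all} elements covered by $\tau(a)$, i.e.\ elements at positions after $a$ with values above $\tau(a)$, and these are in general \emph{long} arcs of $E(\rho)$, not short ones; any two such arcs that are in series in $E(\rho)$ must become crossing in $E(\tau)$, so the right endpoint of the left one and the left endpoint of the right one must both lie inside $I$. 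Your $I$ is described only through endpoints of short arcs of $E(\rho)$, so it misses precisely the endpoints that have to be permuted. Concretely, take $\tau=31542$, $a=2$, $\tau(a)=1$, so $\rho=2431$. Then $E(\rho)=\{(3,4),(1,5),(2,6),(7,8)\}$ and $E(\tau)=\{(3,4),(1,8),(2,9),(5,10),(6,7)\}$. The only short arc of $E(\rho)$ that lengthens is $(7,8)$, so your prescription puts its right endpoint $8$ (plus the ``locus'' of the new arc) into $I$; but what must happen is that the left endpoint $7$ moves in front of the right endpoints $5$ and $6$ of the \emph{long} arcs $(1,5)$ and $(2,6)$, so $5,6,7$ must all lie in $I$ --- an interval around $8$ reorders nothing and the tangled matching is not $E(\tau)$. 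Moreover $I$ must \emph{exclude} the endpoints of arcs not covered by $\tau(a)$ (here $3$ and $4$, and likewise the left endpoints of arcs of LR-minima of $\tau$ lying to the right of $a$), since the inserted short arc must not become nested below them; your ``contains''-only formulation places no such constraint, and on the natural reading of your last clause it would actively put such endpoints into $I$. A smaller point: your parenthetical description of which elements get promoted to LR-minima of $\rho$ is also incorrect (for $\tau=123$, $a=1$, the element $3$ satisfies your condition but is not an LR-minimum of $\rho=12$); one also needs the element to be smaller than everything between position $a$ and its own position.

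For comparison, the paper first uses Observation~\ref{obs-ematch} to show that the only pairwise relations that can differ between $E(\rho)$ and $E(\tau)$ are pairs of arcs of elements both covered by $\tau(a)$, which can change from in series to crossing; it then partitions the arcs into $\alpha_L$ (positions before $a$), $\alpha_R$ (values below $\tau(a)$) and $\alpha_M$ (covered by $\tau(a)$), and takes $I=(x,y)$ with $x$ the rightmost endpoint of an $\alpha_L$-arc and $y$ the leftmost endpoint of an $\alpha_R$-arc, so that the endpoints lying in $I$ are exactly endpoints of $\alpha_M$-arcs. That choice simultaneously guarantees that the reordering creates all the required crossings and none others, and that the new short arc ends up nested below exactly the arcs of $\alpha_M$ (no appeal to Lemma~\ref{lem-envelope} is needed, since one compares with $E(\tau)$ directly). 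Until you pin $I$ down in this way and actually verify that the reordering inside $I$ reproduces the endpoint order of $E(\tau)$, the concluding sentence of your proposal asserts the statement rather than proving it.
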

\begin{proof}
The case when $\tau(a)$ is not an LR-minimum follows directly from
Observation~\ref{obs-ematch}. Suppose that $\tau(a)$ is an LR-minimum of~$\tau$
(see Figure~\ref{fig-tangle}).
Let $\gamma$ be the arc of $E(\tau)$ representing~$\tau(a)$. Let
$\bar\rho=\bar\rho(1)\bar\rho(2)\dotsb\bar\rho(n-1)$ denote the sequence
$\tau\setminus\{\tau(a)\}$. That is, $\bar\rho(j)=\tau(j)$ for $j<a$ and
$\bar\rho(j)=\tau(j+1)$ otherwise. By assumption, $\bar\rho$ is order-isomorphic
to~$\rho$.

For each $i\in[n-1]$, let $\alpha_i$ be the arc representing $\rho(i)$
in $E(\rho)$, and let $\beta_i$ be the arc representing $\bar\rho(i)$ in
$E(\tau)$. Thus, $E(\tau)$ is equal to
$\{\beta_1,\dotsc,\beta_{n-1}\}\cup\{\gamma\}$.

Let us compare, for some $1\le i<j\le n-1$, the mutual position of $\alpha_i$
and $\alpha_j$ with the mutual position of $\beta_i$ and~$\beta_j$. 
By Observation~\ref{obs-ematch}, we see that
$\rend{\beta_i}<\rend{\beta_j}$ and $\rend{\alpha_i}<\rend{\alpha_j}$.
We also see that $\lend{\alpha_i}<\lend{\alpha_j}$ if and only if 
$\lend{\beta_i}<\lend{\beta_j}$. 

Observation~\ref{obs-ematch} also shows that if $\alpha_i$ and $\alpha_j$
are nested, then $\beta_i$ and $\beta_j$ are nested as well, and if $\alpha_i$
and~$\alpha_j$ are crossing, then so are $\beta_i$ and~$\beta_j$. Thus, the
only situation when the relative position of $\alpha_i$ and~$\alpha_j$ can
differ from the relative position of $\beta_i$ and~$\beta_j$ is when $\alpha_i$
and $\alpha_j$ are in series, while $\beta_i$ and~$\beta_j$ are crossing. This
happens when $\tau(a)$ covers both $\bar\rho(i)$ and $\bar\rho(j)$ in $\tau$,
but no LR-minimum of $\rho$ covers both $\rho(i)$ and~$\rho(j)$.

We partition the set $[n-1]$ into three (possibly empty) disjoint parts
$L$, $M$ and $R$ by defining $L=\{1,\dotsc,a-1\}$, $R=\{i;
\bar\rho(i)<\tau(a)\}$,
and $M=[n-1]\setminus(L\cup R)$. We will use the shorthand $\alpha_L$ to denote
the set $\{\alpha_i; i\in L\}$, and similarly for $\beta_M$, $\bar\rho_R$, etc.

Note that $\bar\rho_M$ is precisely the set of elements of $\bar\rho$ that are
covered by $\tau(a)$ in~$\tau$. Therefore, the matching $\alpha_L\cup\alpha_R$
is isomorphic to~$\beta_L\cup\beta_R$, and the arc $\gamma$ is nested below all
the arcs in~$\beta_M$. 

Let $x$ be the rightmost endpoint of an arc in $\alpha_L$. Note that an arc
$\alpha_i$ belongs to $\alpha_L$ if and only if its right endpoint is in the
interval $(-\infty,x]$. Symmetrically, let $y$ be the leftmost endpoint of an
arc in $\alpha_R$, and note that $\alpha_i$ belongs to $\alpha_R$ if and only if
its left endpoint is in $[y,\infty)$. Let $I$ be the open interval $(x,y)$, and
note that an arc $\alpha_i$ belongs to $\alpha_M$ if and only if it either has
at least one endpoint in $I$ or if $I$ is nested below it.

Combining the above facts, we see that $E(\tau)$ can be obtained by tangling
$E(\rho)$ in the interval~$I$.
\end{proof}

It follows from Lemma~\ref{lem-tangle} that if $\tau$ is a permutation that
contains a pattern $\rho$, then $E(\tau)$ may be obtained from $E(\rho)$ by
a sequence of tanglings and insertions of long arcs.

For the rest of this subsection, fix a permutation $\pi$ of the form
$1\oplus\sigma$, where $\sigma$ is an indecomposable permutation of
order at least three.

Note that the matching $E(\pi)$ is in fact isomorphic to $\ma{\pi}$, and
that $R(\pi)$ is isomorphic to~$\ma{\sigma}$. Note also that $\ma{\sigma}$ is 
connected, since $\sigma$ is indecomposable. 

The proof of Theorem~\ref{thm-split} relies on two technical lemmas. We first
state the two lemmas and prove that they imply Theorem~\ref{thm-split}, and
then prove the two lemmas themselves. 

\begin{lemma}\label{lem-split} Let $\sigma$ be an indecomposable
permutation of order at least three. Then the class $\MAv(\ma\sigma)$ is 
splittable. Furthermore, there exist two connected $\ma{\sigma}$-avoiding
matchings $M_1$ and $M_2$, such that the class $\MAv(\ma\sigma)$ admits the
splitting $\mexp{2}{\{\MAv(M_1),\MAv(M_2)\}}$.
\end{lemma}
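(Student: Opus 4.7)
My plan is to exhibit two specific connected matchings $M_1, M_2 \in \MAv(\ma\sigma)$ and to describe an explicit four-coloring of the arcs of any $N \in \MAv(\ma\sigma)$ that realises the multiset splitting $\mexp{2}{\{\MAv(M_1),\MAv(M_2)\}}$. The three ingredients are: a careful choice of $M_1, M_2$ extracted from the structure of $\ma\sigma$; a coloring rule tailored to those choices; and a verification that each of the four monochromatic sub-matchings avoids its designated $M_i$.

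Since $\sigma$ is indecomposable of order at least three, $\ma\sigma$ has at least three arcs and its intersection graph is connected, so in particular there exist two arcs $\alpha, \beta$ of $\ma\sigma$ that cross. I would build $M_1$ and $M_2$ as small connected ``one-step perturbations'' of $\ma\sigma$ hinged on $\alpha$ and $\beta$ respectively: for instance, replace the chosen arc with a pair of arcs in a relative position that destroys every reappearance of $\ma\sigma$ while keeping the matching connected through the remaining unperturbed arcs. The fact that $\alpha$ and $\beta$ cross is what lets one verify both connectedness and $\ma\sigma$-avoidance of $M_i$ uniformly.

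For a given $N \in \MAv(\ma\sigma)$, I would sweep through its arcs in a canonical order (say, by right endpoint) and assign each arc one of four colors from $\{(1,1),(1,2),(2,1),(2,2)\}$ by a greedy rule, where colors with first coordinate $i$ are maintained to be $M_i$-avoiding. A color is forbidden for the current arc $\gamma$ precisely when adding $\gamma$ to the corresponding class would complete a copy of $M_i$; the key invariant to maintain is that at most three of the four colors are ever forbidden, so a legal choice always remains.

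The main obstacle, and I expect by far the hardest step, is this feasibility claim. The intended contrapositive argument runs as follows: if all four colors were simultaneously forbidden for some $\gamma$, then four different ``near-copies'' of $M_1, M_2$ would be waiting to be completed by $\gamma$. Using the way $\alpha, \beta$ were placed inside $\ma\sigma$ and the specific construction of $M_1, M_2$, I would then splice these near-copies together with $\gamma$ to extract an actual copy of $\ma\sigma$ inside $N$, contradicting $N \in \MAv(\ma\sigma)$. The need for multiplicity two on each of $M_1, M_2$ in the splitting is meant to reflect the symmetry between the two crossing arcs $\alpha$ and $\beta$: intuitively, each of them is responsible for half of the forbidden near-copies, and the combinatorial extraction closes only when all four contributions are available.
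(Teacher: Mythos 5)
Your plan has the right general shape (two connected perturbations of $\ma\sigma$ serving as forbidden patterns, four color classes), but the step you yourself identify as the hardest one is genuinely missing, and I do not believe it can be repaired in the form you propose. A greedy sweep with the invariant ``at most three of the four colors are ever forbidden'' is a local argument: the current arc $\gamma$ may simultaneously complete many near-copies of $M_1$ and $M_2$, and there is no mechanism in your setup forcing four such near-copies to splice into a copy of $\ma\sigma$ --- the near-copies are copies of $M_i$ minus one arc, and $M_1,M_2$ themselves avoid $\ma\sigma$, so completing them says nothing about $\ma\sigma$ unless the perturbed arc of $M_i$ stands in a very specific positional relation to the rest of the pattern and to $\gamma$. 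You leave both that relation and the construction of $M_1,M_2$ unspecified (``a pair of arcs in a relative position that destroys every reappearance of $\ma\sigma$''), so neither the connectedness nor the $\ma\sigma$-avoidance of $M_1,M_2$, nor the splicing step, can be checked. Note also that local/greedy colorings are exactly what fails for circle-graph-type problems (this lemma specializes to bounding the chromatic number of clique-free circle graphs, where no greedy bound is known), so the obstruction is not cosmetic.

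The paper's proof is global rather than greedy, and the multiplicity $2$ has a different origin than the symmetry you invoke. One forms $M^\pm$ from $M=\ma\sigma$ by replacing the arc at the leftmost (resp.\ rightmost) endpoint by a short arc just inside its other endpoint, and then completes $M^+$ (resp.\ $M^-$) to a connected matching $N^+$ (resp.\ $N^-$) by adding a chain of small arcs; verifying that $N^\pm$ still avoids $M$ requires a genuine case analysis (including one sporadic small case). Then, for a connected $\ma\sigma$-avoiding $R$, one takes BFS levels $L_0,L_1,\dotsc$ of the intersection graph from the arc containing the leftmost endpoint: arcs in a level only cross neighboring levels, so it suffices to split each level, and the even/odd split of levels is what produces the factor $\mexp{2}{\{\cdot\}}$. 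Within a level, each arc is classified by whether its chosen parent crosses it from the left or from the right; a copy of $M^+$ inside a block of the ``left'' part could be completed by that parent (whose left endpoint lies left of the whole block) to a copy of $M$, a contradiction, so the blocks avoid $M^+$ and hence the connected pattern $N^+$, and symmetrically for $M^-,N^-$. Your proposal would need to be rebuilt around some such global decomposition (or an entirely new idea for the feasibility claim) before it constitutes a proof.
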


\begin{lemma}\label{lem-perm} Suppose that $\sigma$ is an indecomposable
permutation of order at least two, $\pi$ is the permutation
$1\oplus\sigma$, and
$N$ is a $\ma{\sigma}$-avoiding matching. There is a $\pi$-avoiding
permutation $\tau\equiv\tau(N)$, such that if $\rho$ is any $\pi$-avoiding
permutation whose reduced envelope matching avoids~$N$, then $\rho$
avoids~$\tau$.
\end{lemma}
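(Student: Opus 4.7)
My strategy is to build $\tau=\tau(N)$ in two stages: first a ``base'' permutation $\tau_0$ whose reduced envelope matching realises $N$ exactly, and then an augmentation by ``witness'' elements that makes the forcing of $N$ in $R(\rho)$ robust against embedding distortions.

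For the base, I would start from $N$ and insert a short arc just inside each arc of $N$, by placing two new adjacent points immediately to the right of each left endpoint and pairing them into a short arc nested beneath the corresponding arc of $N$. The resulting matching satisfies the envelope condition of Lemma~\ref{lem-envelope}, its long arcs form a copy of $N$, and so it equals $E(\tau_0)$ for a unique permutation $\tau_0$ with $R(\tau_0)=N$. By Lemma~\ref{lem-rpi} together with the hypothesis $N\in\MAv(\ma\sigma)$, we immediately get $\tau_0\in\Av(\pi)$. Now suppose $\rho\in\Av(\pi)$ contains $\tau_0$ via an embedding $f$. Using Observation~\ref{obs-ematch} together with the fact that non-LR-min status is preserved under sub-permutation embedding (if $\tau_0(j)<\tau_0(i)$ witnesses $\tau_0(i)$ non-LR-min, then $\rho(f(j))<\rho(f(i))$ witnesses the same for $\rho(f(i))$), the images of the arcs of $N$ remain long arcs in $R(\rho)$, and their nesting and crossing relations are preserved from $R(\tau_0)$ to $R(\rho)$. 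The only obstruction is that a series pair of arcs in $R(\tau_0)$ may become a crossing pair in $R(\rho)$, caused by $\rho$ containing an extra LR-minimum (outside the image of $f$) that covers both endpoints of the pair.

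To block such distortions, the final $\tau$ is obtained from $\tau_0$ by inserting, for each series pair of arcs of~$N$, a ``witness'' copy of $\sigma$ placed in the region of $\tau$ that lies to the right of the series pair's leftmost position and above the pair's smaller value. This is designed so that any hypothetical extra LR-minimum in $\rho$ producing a crossing of the series pair would, together with the image of the witness, create an occurrence of $1\oplus\sigma=\pi$ in~$\rho$, contradicting $\rho\in\Av(\pi)$. For example, in the case $\sigma=12$, $\pi=123$, and $N$ the matching consisting of two series arcs, the naive choice $\tau_0=3412$ fails (it is contained in $\rho=42513\in\Av(123)$ even though $R(\rho)$ has only two crossing arcs), whereas the augmented $\tau=35412$ can be verified directly to be excluded from every ``bad'' $\rho\in\Av(\pi)$ with $R(\rho)$ missing~$N$.

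The main obstacle I foresee is placing these witnesses without destroying the property $\tau\in\Av(\pi)$, i.e.\ without creating an occurrence of $\ma\sigma$ among the long arcs of $E(\tau)$. This forces the witness $\sigma$-copies to be realised predominantly by LR-minima (whose contributions to $E(\tau)$ are short arcs not appearing in $R(\tau)$), with only limited non-LR-min content, arranged so that no two non-LR-min elements of $\tau$ form a $\ma\sigma$ pattern. Verifying that such a witness placement is simultaneously achievable for every series pair of $N$, and thereby producing a single $\tau$ that avoids $\pi$ and yet forces $N$ into the reduced envelope matching of every $\pi$-avoider containing it, is the technical heart of the proof.
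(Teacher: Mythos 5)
You have the right framework: realizing $N$ as the reduced envelope matching of a base permutation $\tau_0$, invoking Lemma~\ref{lem-rpi} to get $\tau_0\in\Av(\pi)$, and diagnosing (via Lemma~\ref{lem-tangle} and Observation~\ref{obs-ematch}) that the only way an embedding of $\tau_0$ into a $\pi$-avoider $\rho$ can fail to produce a copy of $N$ in $R(\rho)$ is that a series pair of arcs becomes a crossing pair, forced by a new LR-minimum of $\rho$ covering both. Your $3412$ versus $35412$ example is also correct. But the construction of the witness gadget --- which is the entire content of the lemma --- is missing, and the gadget you propose does not work as described. A ``witness copy of $\sigma$'' positioned so that the hypothetical new LR-minimum is guaranteed to cover all of it must sit, roughly, in the gap between the two arcs of the series pair; but then the later (smaller) element of that very pair lies below and to the left of the entire witness, so $\tau$ would already contain $1\oplus\sigma=\pi$. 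This is exactly the self-defeating placement problem you name as ``the technical heart'' and then defer rather than resolve. In addition, one witness per series pair is the wrong granularity: a single tangling can distort many series pairs at once, so the witnesses must be anchored to the locations where tanglings can act on $N$, not to the pairs of arcs themselves.

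The paper's resolution is to use a \emph{broken} copy of $\ma\sigma$ rather than an intact copy of $\sigma$. Let $M\cong\ma\sigma$ on $[2m]$, and let $M'$ be obtained from $M$ by replacing the arcs $(m,i)$ and $(j,m+1)$ with $(m+1,i)$ and $(j,m)$, i.e., transposing the two middle endpoints; this turns exactly one crossing into a series relation, so $M'$ is not isomorphic to $M$ and (being the same size) avoids $\ma\sigma$. A copy of $M'$ is inserted into every gap $(a,a+1)$ of $N$ in which a right endpoint immediately precedes a left endpoint --- precisely the places where a tangling can reorder endpoints of $N$ --- and $\tau$ is taken to be the permutation whose reduced envelope matching is the resulting matching $N'$ (still $\ma\sigma$-avoiding, since $\ma\sigma$ is connected and each component of $N'$ lies in $N$ or in one $M'$-copy). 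Any tangling that alters the relative order of endpoints of $N$ must act on an interval containing such a gap; it then un-transposes the two middle endpoints of the local $M'$-copy, restoring $M\cong\ma\sigma$, and nests the newly created short arc below it, producing $\ma\pi$ in $E(\rho)$ and hence $\pi$ in $\rho$. So the correct witness is one that only the forbidden distortion can repair into $\ma\sigma$ --- this is the idea your proposal needs and does not supply.
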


Let us show how the two lemmas imply Theorem~\ref{thm-split}. Let $M_1$ and
$M_2$ be the two matchings from Lemma~\ref{lem-split}. For each $M_i$,
Lemma~\ref{lem-perm} provides a $\pi$-avoiding
pattern~$\tau_i\equiv\tau(M_i)$. We claim that $\Av(\pi)$ has a splitting
$\mexp{2}{\{\Av(\tau_1),\Av(\tau_2)\}}$.

To see this, let $\rho$ be a $\pi$-avoiding permutation. Consider its
reduced envelope matching $R=R(\rho)$. By Lemma~\ref{lem-rpi}, $R$
avoids $\ma\sigma$, so by Lemma~\ref{lem-split}, it can be merged from four
matching $R_1$, $R_2$, $R_3$, and $R_4$, where $R_1$ and $R_2$ avoid $M_1$,
while $R_3$ and $R_4$ avoid~$M_2$.

We now define four permutations $\rho_1,\dotsc,\rho_4$, all of which are
subpermutations of $\rho$. The permutation $\rho_i$ consists of those elements
of $\rho$ which are either LR-minima of $\rho$ or which are covered and
correspond to arcs of~$R_i$. Note that $R_i$ is then precisely the reduced
envelope matching of~$\rho_i$. Since $R_1$ and $R_2$ avoid $M_1$,
Lemma~\ref{lem-perm} shows that $\rho_1$ and $\rho_2$ avoid~$\tau_1$. Similarly,
$\rho_3$ and $\rho_4$ avoid~$\tau_2$. Thus $\rho$ admits the splitting
$\mexp{2}{\{\Av(\tau_1),\Av(\tau_2)\}}$, as claimed.

\begin{proof}[Proof of Lemma~\ref{lem-split}] Let $m$ be the order of $\sigma$.
By our assumption on $\sigma$, we know that $m\ge 3$. Let $M$ be the matching
isomorphic to $\ma\sigma$ on the set of endpoints $\{1,2,\dotsc,2m\}$. Note
that $\{1,2,\dotsc,m\}$ are left endpoints of $M$, and $\{m+1,\dotsc,2m\}$ are
right endpoints.

\begin{figure}
 \hfil\includegraphics[scale=0.7]{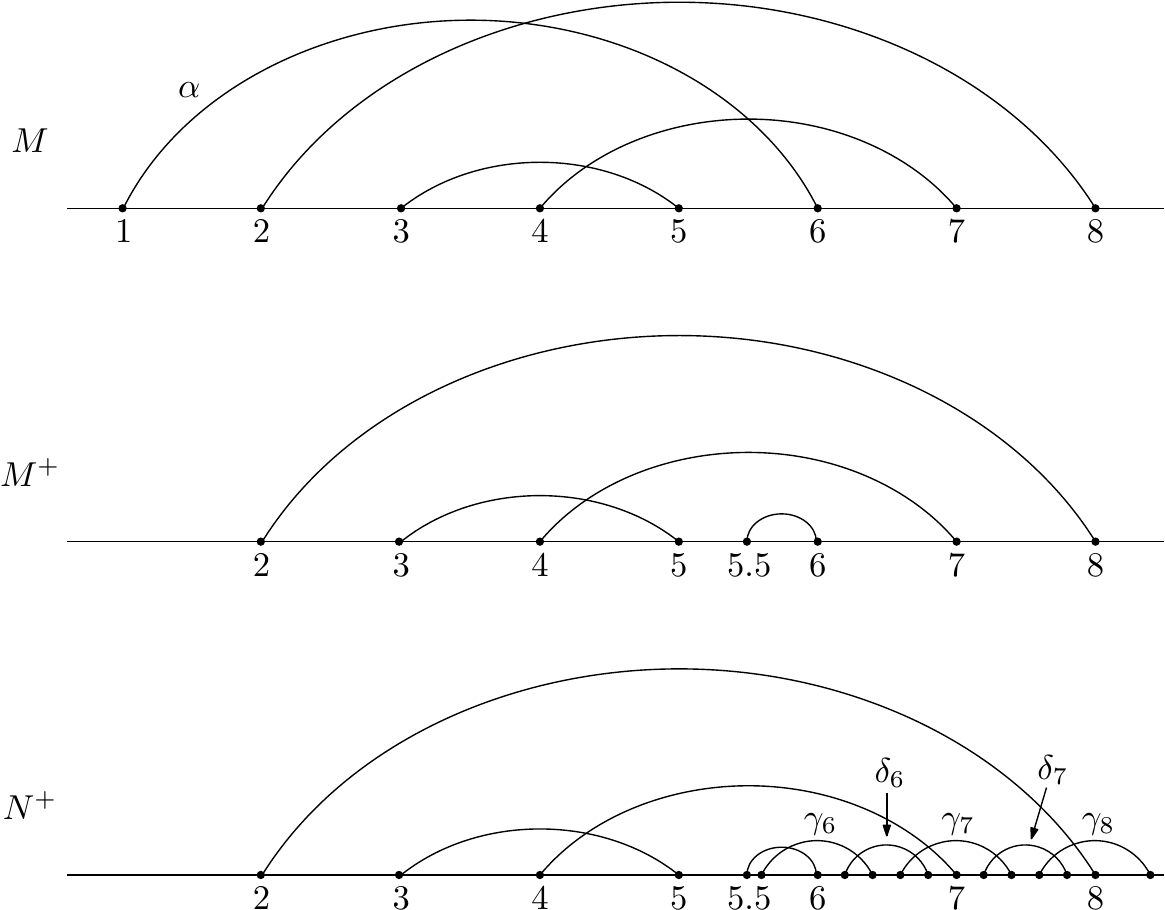}
\caption{The matching $M$ representing the permutation $\sigma=2413$ (top), and
the corresponding matchings $M^+$ (middle) and $N^+$ (bottom) used in the proof
of Lemma~\ref{lem-split}.}\label{fig-mnplus}
\end{figure}

Define two new matchings $M^+$ and $M^-$ as follows. Let $\alpha$ be the arc of
$M$ incident to the leftmost endpoint of~$M$, i.e., $\alpha$ is the arc of the
form $(1,x)$ for some $x\in\{m+1,m+2,\dotsc,2m\}$. Let $M^+$ be the matching
obtained from $M$ by removing the arc $\alpha$ and inserting a new arc with
endpoints $(x-0.5,x)$ (see Figure~\ref{fig-mnplus}). Symmetrically, define the
matching $M^-$ by considering the arc $\beta =(y,2m)\in M$ incident to the
rightmost endpoint of $M$, and replacing $\beta$ with an arc $(y,y+0.5)$.

We now show that it is possible to insert new arcs to $M^+$ in order to obtain a
connected matching $N^+$ that avoids~$M$. If $M$ has exactly three arcs, we
easily check by hand that this is possible, so let us assume that $m\ge 4$.

Let $\gamma_i$ denote the arc $(i-0.4,i+0.4)$ and let $\delta_j$ denote the
arc $(j+0.2,j+0.8)$. Define sets of arcs $\Gamma=\{\gamma_i;\; x\le i\le 2m\}$
and $\Delta=\{\delta_j;\; x\le j < 2m\}$, and consider the matching
$N^+=M^+\cup\Gamma\cup\Delta$. We claim that $N^+$ is a connected $M$-avoiding
matching.

To see that $N^+$ is connected, notice first that since $M$ is connected, every
connected component of $M^+$ has at least one arc incident to one of the
endpoints $\{x,\dotsc,2m\}$. Since any such arc is crossed by an arc of
$\Gamma$, and since $\Gamma\cup\Delta$ induce a connected matching, we see that
$N^+$ is connected.

Let us argue that $N^+$ avoids $M$. Suppose for contradiction that $N^+$ has a
submatching $\overline{M}\subseteq N^+$ isomorphic to~$M$. Note that
$\overline{M}$ is a permutation matching, i.e., it has no two arcs in series.
In particular, $\overline{M}$ contains at most one arc from $\Gamma$ and at most
one arc from $\Delta$. Since $\overline{M}$ is connected and has at most one arc
of $\Gamma$, it may not contain arcs from two distinct components of~$M^+$. On
the other hand, $\overline{M}$ must contain at least one arc
$\gamma_i\in\Gamma$, otherwise it would be a subset of a single component of
$M^+$, which is impossible. Therefore, $\overline{M}$ has no arc whose right
endpoint is to the left of $x$, since any such arc is in series with all the
arcs of~$\Gamma$, and $\overline{M}$ does not contain the arc $(x-0.5,x)$
since this arc does not belong to any connected permutation submatching of
$N^+$ having more than two arcs.

Let $\cro(M)$ denote the number of crossings in the matching $M$, i.e., the
number of edges in the intersection graph of~$M$. We see that
$\cro(M^+)=\cro(M)-(2m-x)$. Moreover, $\cro(M)=\cro(\overline{M})\le
\cro(M^+)+2$, since $\overline{M}$ is obtained by adding at most one arc of
$\gamma$ and a most one arc of $\Delta$ to a submatching of~$M^+$. This shows
that $x\ge 2m-2$. We see that $x\neq 2m$, because $M$ is connected. Also,
$x\neq 2m-1$, because $\overline{M}$ has at least four arcs. This leaves the
possibility $x=2m-2$ and $\overline{M}$ contains the two arcs of $M$ with
endpoints $2m-1$ and $2m$, together with the two arcs $\gamma_{2m-1}$ and
$\delta_{2m-2}$. Since $M$ is connected, this only leaves the possibility when
$M$ is the matching $\{ (1,6),(2,8),(3,5),(4,7)\}$, but in this case we
easily check that $N^+$ has no copy of~$M$.

We conclude that $N^+$ is a connected matching containing $M^+$ but not~$M$. By
a symmetric argument, we obtain a connected matching $N^-$ containing $M^-$ but
not~$M$. We now show that Lemma~\ref{lem-split} holds with $M_1=N^+$ and
$M_2=N^-$, i.e., we show that the class of matchings $\MAv(M)=\MAv(\ma\sigma)$
admits the splitting $\cS=\mexp{2}{\{\MAv(N^+),\MAv(N^-)\}}$. 

Let $R$ be any $\ma\sigma$-avoiding matching. Note that a disjoint union of 
$N^+$-avoiding matchings is again $N^+$-avoiding, and the same is true for 
$N^-$, since both $N^+$ and $N^-$ are connected. In particular, if each 
connected component of $R$ admits the splitting $\cS$, then so
does $R$. We may therefore assume that $R$ is connected.

Recall that $\uint{R}$ is the intersection graph of~$R$. Let $\alpha_0\in R$ be
the arc that contains the leftmost endpoint of~$R$. We partition the arcs of $R$
into sets $L_0,L_1,L_2,\dotsc$, where an arc $\beta$ belongs to $L_i$ if and
only if the shortest path between $\alpha_0$ and $\beta$ in $\uint{R}$ has
length~$i$. We refer to the elements of $L_i$ as \emph{arcs of level $i$}. In
particular, $\alpha_0$ is the only arc of level~0. Since $R$ is connected, the
level of each arc is well defined. 

Clearly, an arc of level $i$ may only cross arcs of level $i-1$, $i$ or
$i+1$. In particular, if each level $L_i$ admits the splitting
$\cS'=\{\MAv(N^+),\MAv(N^-)\}$, then the union $L_0\cup L_2\cup L_4\cup\dotsb$
of all the even levels admits this splitting as well, and the same is true for
the union of the odd layers. It then follows that the matching $R$ admits the
splitting~$\mexp{2}{\cS'}=\cS$.

It is thus enough to show, for each fixed $i$, that $L_i$ admits the splitting
$\cS'$. Since $L_0$ only contains the arc $\alpha_0$, we focus on the remaining
levels, and fix $i\ge 1$ arbitrarily. Note that each arc of level $i$ crosses at
least one arc of level $i-1$. For an arc $\beta\in L_i$, let $\nu(\beta)$ be an
arbitrary arc of level $i-1$ crossed by~$\beta$. We now partition $L_i$ into two
sets $L_i^+$ and $L_i^-$ as follows:
\begin{align*}
 L_i^+&=\{\beta\in L_i\colon \nu(\beta)\ \text{crosses}\ \beta\ \text{from the
left}\}\text{, and}\\
 L_i^-&=\{\beta\in L_i\colon \nu(\beta)\ \text{crosses}\ \beta\ \text{from the
right}\}. 
\end{align*}
Our goal is to show that $L_i^+$ avoids $N^+$ and $L_i^-$ avoids $N^-$.

Recall that a block of a matching is a maximal $\uplus$-indecomposable
submatching. The rest of the proof of Lemma~\ref{lem-split} is based
on two claims.

\begin{claim}\label{cla-blockx}
Let $B$ be a block of the matching $L_i^+$, let $x$ be the leftmost endpoint
of $B$, and let $\beta$ be and arc of~$B$. Then the left endpoint of
$\nu(\beta)$ is to the left of~$x$. Similarly, if $B'$ is a block of $L_i^-$,
$x'$ is the rightmost endpoint of $B'$, and $\beta'$ is an arc of $B'$, then
the right endpoint of $\nu(\beta')$ is to the right of~$x'$.
\end{claim}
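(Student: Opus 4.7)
By reflecting the real line horizontally the $L_i^-$ statement is a mirror image of the $L_i^+$ statement, so I would concentrate on the latter. The claim is immediate for the arc $\beta_*\in B$ whose left endpoint equals $x$, because $\nu(\beta_*)$ crosses $\beta_*$ from the left, which forces $\lend{\nu(\beta_*)}<\lend{\beta_*}=x$. The plan is therefore to handle any other $\beta\in B$ by contradiction: I would assume $p:=\lend{\nu(\beta)}\ge x$ and look for an impossibility.

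Writing $y=\nu(\beta)$, the assumption $p\ge x$ combined with $p<\lend{\beta}$ (which holds because $y$ crosses $\beta$ from the left) places $p$ strictly between the two endpoints $x$ and $\lend{\beta}$ of $B$. Moreover, $p$ is not itself an endpoint of $B$, since $y\in L_{i-1}$ does not lie in $L_i^+$ and the endpoints of $R$ are distinct. The $\uplus$-indecomposability of $B$ then produces an arc $\gamma\in B$ that straddles $p$, i.e., $\lend{\gamma}<p<\rend{\gamma}$. Since $\gamma\in L_i^+$, it comes equipped with its own arc $\nu(\gamma)\in L_{i-1}$ crossing it from the left, giving the ordering $\lend{\nu(\gamma)}<\lend{\gamma}<p=\lend{y}$.

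The final step compares $y$ with $\gamma$. Their left endpoints satisfy $\lend{\gamma}<\lend{y}$, so either $\gamma$ crosses $y$ from the left (when $\rend{\gamma}<\rend{y}$) or $y$ is nested below $\gamma$ (when $\rend{\gamma}>\rend{y}$). In each sub-case I would combine this geometric relation with the already established inequality $\lend{\nu(\beta_*)}<x$ to either decompose $B$ at the point $p$ — contradicting its $\uplus$-indecomposability as a block of $L_i^+$ — or exhibit a path in $\uint R$ from $\alpha_0$ to $\beta$ of length strictly less than $i$, contradicting $\beta\in L_i$. The main obstacle, as I see it, will be the nesting sub-case, where $y$ contributes no crossing edge to $\gamma$ in $\uint R$ and the contradiction has to be extracted from block geometry alone by carefully tracking the relative positions of $\lend{\nu(\beta_*)},\rend{\nu(\beta_*)},\lend{y},\rend{y}$ with respect to the endpoints of $B$; once that case is handled, the crossing sub-case should close by essentially the same bookkeeping.
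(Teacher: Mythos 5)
Your reduction to the $L_i^+$ case, the immediate treatment of the arc whose left endpoint is $x$, and the use of $\uplus$-indecomposability to produce an arc $\gamma\in B$ straddling $p=\lend{\nu(\beta)}$ are all fine. The gap is the final step: neither of the two contradictions you propose can actually be reached from the local configuration you have assembled. You cannot ``decompose $B$ at the point $p$'', because the arc $\gamma$ you just produced straddles $p$ --- that is precisely what blocks such a decomposition. And the relation between $\gamma$ and $y=\nu(\beta)$ yields no short path from $\alpha_0$ to $\beta$: in the crossing sub-case a level-$i$ arc crosses a level-$(i-1)$ arc, which is perfectly consistent with the level structure, and in the nesting sub-case there is no edge of $\uint R$ at all. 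Concretely, the arcs $\gamma=(1,10)$, $\beta=(4,12)$, $y=(2,5)$ satisfy every constraint you track ($y$ crosses $\beta$ from the left, $\gamma$ crosses $\beta$ from the left so they may share a block with $x=1$, $\gamma$ straddles $\lend{y}=2>x$, $y$ nested below $\gamma$), so no bookkeeping of endpoints inside the block, nor the inequality $\lend{\nu(\beta_*)}<x$ for the special arc $\beta_*$, can refute the assumption. The obstruction is global, not local.

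The missing ingredient is the shortest path $P=(\alpha_0,\alpha_1,\dotsc,\alpha_{i-1}=\nu(\beta),\beta)$ in $\uint{R}$, combined with the observation that any arc of $R$ with exactly one endpoint in the span $[x,z]$ of $B$ (where $z$ is the rightmost endpoint of $B$) must cross an arc of $B$ --- again by $\uplus$-indecomposability --- and therefore has level at least $i-1$. Since $\alpha_0$ contains the leftmost endpoint of $R$, which lies strictly to the left of $x$, one deduces inductively along $P$ that every $\alpha_j$ with $j<i-1$ has both endpoints outside $[x,z]$; hence $\nu(\beta)$, which crosses $\alpha_{i-2}$, has at least one endpoint outside $[x,z]$, and since its right endpoint lies in $(\lend{\beta},\rend{\beta})\subseteq(x,z)$, its left endpoint must lie to the left of $x$. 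Your proposal never brings $\alpha_0$ or the path into play (nor does it separate the base case $i=1$ from $i>1$), so this essential part of the argument is absent, and the ``sub-case analysis'' you defer cannot be completed as described.
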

\begin{proof}[Proof of Claim~\ref{cla-blockx}]
We prove the claim for $L_i^+$, the case of $L_i^-$ is analogous. If $i=1$,
then $\nu(\beta)$ is the arc $\alpha_0$, and the claim holds. Suppose now that
$i>1$. Let $y$ be the rightmost endpoint of~$B$. Note that any arc of $R$ that
has exactly one endpoint in the interval $[x,y]$ must cross at least one arc of
$B$, since $B$ is $\uplus$-indecomposable. In particular, any arc having
exactly one endpoint in $[x,y]$ must have level at least~$i-1$.

Let $P=(\alpha_0,\alpha_1,\dotsc,\alpha_{i-1},\beta)$ be a shortest path in
$\uint{R}$ between $\alpha_0$ and $\beta$, chosen in such a way that
$\alpha_{i-1}=\nu(\beta)$. Since $\alpha_0$ contains the leftmost endpoint of
$R$, both endpoints of $\alpha_0$ are outside of $[x,y]$, and the same is true
for every $\alpha_j$ with $j<i-1$. Consequently, the arc $\nu(\beta)$ must have
at least one endpoint outside of $[x,y]$, and since $\nu(\beta)$ crosses
$\beta$ from the left, the left endpoint of $\nu(\beta)$ is to the left of~$x$,
and the claim is proved.
\end{proof}

\begin{claim}\label{cla-mplusx} 
 Every block of the matching $L_i^+$ avoids $M^+$, and every block of $L_i^-$
avoids~$M^-$.
\end{claim}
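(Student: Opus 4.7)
The plan is to argue by contradiction: I would assume that some block $B$ of $L_i^+$ contains a copy $\overline{M^+}$ of $M^+$, and then exhibit a copy of $M\cong\ma{\sigma}$ inside $R$, contradicting $R\in\MAv(\ma{\sigma})$. The copy of $M$ will be obtained from $\overline{M^+}$ by a single arc swap: let $\beta\in\overline{M^+}$ be the arc playing the role of the short arc $(x-0.5,x)$ of $M^+$, and replace it by $\nu(\beta)$. The claim for $L_i^-$ and $M^-$ is completely symmetric, and I would dispose of it in one line using the mirror part of Claim~\ref{cla-blockx}.

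First I would record a structural observation about $M^+$. Every arc of $M$ other than $\alpha=(1,x)$ has left endpoint at most $m$ and right endpoint in $\{m+1,\dotsc,2m\}\setminus\{x\}$, so it is either entirely to the left of the short arc (when its right endpoint is less than $x$) or properly contains the short arc (when its right endpoint exceeds $x$). In particular, no arc of $M^+$ crosses or is nested below the short arc. Transporting this through the isomorphism $\overline{M^+}\cong M^+$, every arc of $\overline{M^+}$ other than $\beta$ is either entirely to the left of $\beta$ or contains $\beta$.

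Next I would verify that $\nu(\beta)$ has the correct position to play the part of $\alpha$. Because $\beta\in L_i^+$, the arc $\nu(\beta)$ crosses $\beta$ from the left, so $\lend{\nu(\beta)}<\lend{\beta}$ and $\lend{\beta}<\rend{\nu(\beta)}<\rend{\beta}$. By Claim~\ref{cla-blockx}, $\lend{\nu(\beta)}$ lies strictly to the left of the leftmost endpoint of $B$, hence to the left of every endpoint of $\overline{M^+}$. For any $\gamma\in\overline{M^+}\setminus\{\beta\}$: if $\gamma$ is entirely to the left of $\beta$, then both endpoints of $\gamma$ sit to the left of $\lend{\beta}<\rend{\nu(\beta)}$, so $\gamma$ is nested below $\nu(\beta)$; if $\gamma$ contains $\beta$, then $\rend{\gamma}>\rend{\beta}>\rend{\nu(\beta)}$ while $\lend{\gamma}>\lend{\nu(\beta)}$, so $\nu(\beta)$ crosses $\gamma$ from the left. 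This reproduces exactly the arrangement of $\alpha$ with the remaining arcs of $M$. Finally, $\nu(\beta)\notin\overline{M^+}$ since $\nu(\beta)\in L_{i-1}$ while $\overline{M^+}\subseteq L_i$, so $(\overline{M^+}\setminus\{\beta\})\cup\{\nu(\beta)\}$ is a genuine submatching of $R$ isomorphic to $M$, which is the desired contradiction.

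The only real obstacle is in the second step: I must ensure that replacing $\beta$ by an arc whose right endpoint lies strictly between $\lend{\beta}$ and $\rend{\beta}$ — rather than precisely at $\rend{\beta}$ — does not perturb the relationships among the surviving arcs. The key observation is that the only arcs of $\overline{M^+}$ with a nontrivial relationship to $\beta$ are those entirely to the left of $\beta$ and those containing $\beta$, and for both families only membership of $\rend{\nu(\beta)}$ in the open interval $(\lend{\beta},\rend{\beta})$ is used, not its precise position. Once this is pinned down, together with Claim~\ref{cla-blockx} anchoring $\lend{\nu(\beta)}$ far to the left of $B$, the isomorphism with $M$ falls out automatically.
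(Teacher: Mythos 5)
Your proposal is correct and follows essentially the same argument as the paper: replace the arc playing the role of the short arc $(x-0.5,x)$ by $\nu$ of it and invoke Claim~\ref{cla-blockx} to see that the result is a copy of $M$. You merely spell out the verification of the isomorphism (which the paper leaves implicit) in more detail.
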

\begin{proof}[Proof of Claim~\ref{cla-mplusx}] Let us prove the claim for
$L_i^+$, the other part is symmetric. Let $B$ be a block of $L_i^+$, and suppose
that it contains a matching $\overline{M^+}$ isomorphic to~$M^+$. Let $\zeta$
be the arc of $\overline{M^+}$ that corresponds to the arc $(x-0.5,x)\in
M^+$ via the isomorphism between $\overline{M^+}$ and $M^+$. By
Claim~\ref{cla-blockx}, the left endpoint of $\nu(\zeta)$ is to the left of any
endpoint of $B$, and in particular, it is to the left of any endpoint of
$\overline{M^+}$. But this means that the matching
$\overline{M^+}\cup\{\nu(\zeta)\}\setminus\{\zeta\}$ is isomorphic to $M$,
contradicting the assumption that $R$ is $M$-avoiding.
\end{proof}

Since every block of $L_i^+$ avoids $M^+$, it also avoids $N^+$. This means
that $L_i^+$ avoids $N^+$ as well. Similarly, $L_i^-$ avoids~$N^-$, showing
that $L_i$ admits the splitting~$\cS'$. This means that $R$ admits the
splitting~$\cS$, and Lemma~\ref{lem-split} is proved.
\end{proof}

\begin{proof}[Proof of Lemma~\ref{lem-perm}]
Consider a $\ma\sigma$-avoiding matching~$N$. Suppose that its point set is the
set $[2n]$. Our goal is to construct a $\pi$-avoiding permutation $\tau$ whose
envelope matching $T$ has the property that any sequence of tanglings and long
arc insertions performed on $T$ will create a matching whose long arcs contain
a copy of $N$ or of~$\ma\sigma$. It will then be easy to verify that such a
permutation $\tau$ has the properties stated in the lemma.

Let $m\ge 2$ be the order of~$\sigma$. Let $M$ be the matching isomorphic to
$\ma\sigma$ on the point set $[2m]$. By assumption, $\sigma$ is
indecomposable, so $M$ is connected. Let $i\ge m+1$ be the endpoint
connected to $m$ by an arc of $M$, and let $j\le m$ be the endpoint connected
to $m+1$. Note that $m$ and $m+1$ are not connected by an arc of $M$, since $M$
is connected. Let $M'$ be the matching obtained from $M$ by replacing the two
arcs $(m,i)$ and $(j,m+1)$ by the arcs $(m+1,i)$ and $(j,m)$.

We now construct the required matching $T$, by adding new arcs to~$N$.
For every two consecutive endpoints $a$ and $a+1$ of $N$ such that $a$ is a
right endpoint and $a+1$ is a left endpoint, we insert into $N$ an isomorphic copy
of $M'$ whose endpoints all belong to the interval $(a,a+1)$. Let $N'$ be the
resulting matching. Note that $N'$ is $\ma\sigma$-avoiding, since
$\ma\sigma$ is connected and each connected component of $N'$ is a subset of $N$
or a subset of an isomorphic copy of~$M'$. 

Next, we create the envelope matching $T$, by inserting a new short arc between
any pair of consecutive endpoints $b<c$ of $N'$ such that $b$ is a left endpoint
and $c$ is a right endpoint. This means that the long arcs of $T$ are precisely
the arcs of~$N'$. Let $\tau$ be the permutation whose envelope matching
is~$T$, and therefore its reduced envelope matching is~$N'$. 

We claim that $\tau$ has the required properties. By Lemma~\ref{lem-rpi}, $\tau$
avoids~$\pi$. It remains to show that if $\rho$ is a $\pi$-avoiding permutation
containing $\tau$, then the reduced envelope matching of $\rho$ contains~$N$.
Suppose that $\rho$ is a $\pi$-avoiding permutation which contains~$\tau$,
and suppose for contradiction that the matching $R=R(\rho)$ avoids~$N$.

By Lemma~\ref{lem-tangle}, the envelope matching of $\rho$ can be created from
$T$ by a sequence of tanglings and insertions of new long arcs. Since the long
arcs of $T$ contain the arcs of $N$ as a subset while $R(\rho)$ avoids $N$, we
see that one of the tanglings used to create $E(\rho)$ from $T$ must have
altered the relative order of the endpoints of~$N$. This means that the tangling
was performed on an interval $I$ that contained a right endpoint $a$ of $N$
situated to the left of a left endpoint $a+1$ of~$N$. By construction, $N'$
contains a copy of $M'$ in the interval $(a,a+1)$. The tangling on $I$ will
change this copy of $M'$ into a copy of $M$, and insert a new short arc nested
below all the arcs of $M$, thus creating a copy of~$\ma\pi$. Since any further
tangling or any further insertion of long arcs cannot affect the relative order
of endpoints in this copy of $\ma\pi$, we see that $E(\rho)$ contains $\ma\pi$,
and therefore $\rho$ contains $\pi$, a contradiction. 

This completes the proof of Lemma~\ref{lem-perm} and of
Theorem~\ref{thm-split}.
\end{proof}

\subsection{More natural splittings}\label{ssec-expl}

The splittings obtained by applying Theorem~\ref{thm-split} involve patterns
which are large and have complicated structure.
It is unlikely that such splittings could be directly used for enumeration
purposes. In special cases, it is however possible to apply a different
argument which yields more natural and more explicit splittings. One such
result, stated in the next theorem, is the focus of this subsection.

\begin{theorem}\label{thm-spsp}
Let $\pi$ be an indecomposable permutation of order~$n$. If the
class $\Av(\pi)$ admits a splitting
\[
 \{\Av(\pi_1),\Av(\pi_2),\dotsc,\Av(\pi_k)\}
\]
for a multiset $\{\pi_1,\dotsc,\pi_k\}$ of indecomposable
permutations, then $\Av(1\oplus\pi)$ admits the splitting
\[
 \mexp{K}{\{\Av(1\oplus\pi_1),\Av(1\oplus\pi_2),\dotsc,\Av(1\oplus\pi_k)\}},
\]
for some $K\le 16^n$.
\end{theorem}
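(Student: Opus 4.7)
The plan is to reduce the statement to the matching setting and then apply the assumed permutation splitting block-by-block, using the level-and-direction decomposition from the proof of Lemma~\ref{lem-split}. Specifically, by Lemma~\ref{lem-rpi} it suffices to exhibit a matching-level splitting
\[
\MAv(\ma\pi)\subseteq\mexp{K}{\{\MAv(\ma{\pi_1}),\MAv(\ma{\pi_2}),\dotsc,\MAv(\ma{\pi_k})\}}
\]
for some $K\le 16^n$; the final conclusion on $\Av(1\oplus\pi)$ then follows by invoking the second part of Lemma~\ref{lem-rpi}.

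For the matching splitting, let $M\in\MAv(\ma\pi)$. Since $\pi$ and each $\pi_j$ is indecomposable, the matchings $\ma\pi$ and $\ma{\pi_j}$ are connected, so every occurrence in $M$ of any of these matchings lies inside a single connected component of~$M$. Hence we may assume $M$ is connected, and attack it with the machinery of Lemma~\ref{lem-split}: declare the arc $\alpha_0\in M$ containing the leftmost endpoint to be the root, give each arc a level $L_i$ according to its distance from $\alpha_0$ in $\uint M$, and split each $L_i$ into $L_i^+$ and $L_i^-$ depending on whether the chosen neighbour $\nu(\beta)\in L_{i-1}$ of $\beta\in L_i$ crosses $\beta$ from the left or from the right. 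Combined with a 2-coloring of the levels by parity, this yields a 4-coloring of $M$ whose color classes enjoy the locality property stated in Claim~\ref{cla-blockx}: each block $B$ sits strictly beyond witness arcs $\nu(\beta)$ on a prescribed side.

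The heart of the proof is to color the arcs of one such block $B$ using the assumed splitting of $\Av(\pi)$. The observation is that the witness arcs force $B$ to behave like a permutation matching avoiding $\pi$: if $N\subseteq B$ were a copy of $\ma{\pi_j}$ for some $j$, then taking any suitable witness arc $\nu(\beta)$ together with $N$ would produce a copy of $\ma\pi$ in $M$, contradicting $M\in\MAv(\ma\pi)$. We therefore construct, canonically from $B$, a permutation $\rho_B\in\Av(\pi)$ together with a map $\phi_B$ from arcs of $B$ to elements of $\rho_B$ that sends every $\ma{\pi_j}$-copy in $B$ to a $\pi_j$-copy in~$\rho_B$. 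The assumed splitting of $\Av(\pi)$ then provides a $k$-coloring of $\rho_B$ in which the $j$-th color class avoids $\pi_j$, and pulling this coloring back along $\phi_B$ colors $B$ with $k$ colors so that the $j$-th class avoids~$\ma{\pi_j}$.

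The main technical obstacle is the non-injectivity of $\phi_B$: two arcs of $B$ mapping to the same element of $\rho_B$ could conspire to create $\ma{\pi_j}$-copies invisible to $\rho_B$. To circumvent this we attach to each arc $\alpha\in B$ a local type $t(\alpha)$ from a set of size at most $4^n$, recording for each of the $n$ arcs of a fixed hypothetical $\ma\pi$-embedding through $\alpha$ one of four relative positions (crossing from the left, crossing from the right, nested above, nested below). Refining the pullback coloring by this type guarantees that any monochromatic $\ma{\pi_j}$-copy in $B$ genuinely descends to a monochromatic $\pi_j$-copy in $\rho_B$, which is excluded by the chosen splitting. Collecting the $4$ colors from the level/direction decomposition, the $4^n$ local types, and the $k$ colors from the splitting of $\Av(\pi)$, one obtains a coloring with at most $4\cdot 4^n\cdot k\le 16^n\cdot k$ classes, so that $K=4\cdot 4^n\le 16^n$ suffices (the small cases of $n$ being trivial). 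The hard part is verifying that the data $(\rho_B,\phi_B,t)$ actually enjoy the promised faithful-transfer property; the remainder of the argument is essentially bookkeeping.
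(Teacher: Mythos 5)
Your reduction to matchings via Lemma~\ref{lem-rpi} (together with Observation~\ref{obs-match} to transfer the hypothesis to permutation matchings) and your use of the connected-component, level and $L_i^{\pm}$ decomposition match the paper's setup, but the heart of your argument --- the ``faithful transfer'' from a block $B$ of $L_i^{\pm}$ to a $\pi$-avoiding permutation $\rho_B$ --- contains a genuine gap, and the specific claims you make to support it are incorrect. First, a block of $L_i^{+}$ is \emph{not} forced to behave like a permutation matching: Claim~\ref{cla-blockx} only says that the witness arcs $\nu(\beta)$ reach past the left end of the block, and what this buys (Claim~\ref{cla-mplusx}) is that the block avoids $M^{+}$, a pattern obtained from $\ma\pi$ by shortening one arc --- not that the block has no two arcs in series or admits a pattern-preserving map onto a $\pi$-avoiding permutation. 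Your statement that a copy of $\ma{\pi_j}$ inside $B$ together with a witness arc would yield a copy of $\ma\pi$ is false ($\ma{\pi_j}$ can be much smaller than $\ma\pi$, and in any case $B$ is allowed to contain copies of the $\ma{\pi_j}$ --- the task is only to color them away). Second, the ``local type'' $t(\alpha)$ is undefined: since $M\in\MAv(\ma\pi)$ there are no $\ma\pi$-embeddings through $\alpha$ at all, and no argument is offered that any bounded amount of local data can repair the non-injectivity of $\phi_B$. You acknowledge yourself that verifying the transfer property is ``the hard part''; that is precisely the content that is missing, and I see no way to make it work in a single pass.

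The paper avoids this issue by recursing on the \emph{forbidden pattern} rather than finishing after one level/direction decomposition. In Lemma~\ref{lem-match} one proves, by induction on the weight $\wei(M)=|M|+\lo(M)$, that $\MAv(\{\ma\pi,M\})$ splits into $4^{\wei(M)}$ copies of $\{\MAv(\ma{\pi_1}),\dotsc,\MAv(\ma{\pi_k})\}$. When $M$ is $\uplus$-indecomposable, the level/direction argument shows each block of $L_i^{\pm}$ avoids a pattern $M^{\pm}$ of weight $\wei(M)-1$, and one applies the inductive hypothesis to these blocks; when $M=M_1\uplus M_2$, one cuts the matching at the leftmost position $i$ where a copy of $M_1$ appears, obtaining an $M_1$-avoiding part, an $M_2$-avoiding part, and a middle part consisting of arcs straddling $i$ --- and it is only this middle part, which genuinely \emph{is} a permutation matching avoiding $\ma\pi$, to which the assumed splitting of $\Av(\pi)$ is applied. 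Taking $M=\ma\pi$ at the top level gives $K\le 4^{2n}=16^n$ (Theorem~\ref{thm-match}), and Lemma~\ref{lem-rpi} then yields the splitting of $\Av(1\oplus\pi)$. Your proposal never isolates such a permutation-matching piece, which is the only place the hypothesis on $\Av(\pi)$ can legitimately be invoked, so as written the argument does not go through.
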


Before we prove this theorem, we show that the assumption that
$\pi_1,\dotsc,\pi_k$ are indecomposable does not restrict
the theorem's applicability.

\begin{lemma}\label{lem-indeco}
 Let $\pi$ be an indecomposable permutation. Suppose that
$\Av(\pi)$ has the splitting
\begin{equation}
 \cS=\{\Av(\pi_1),\Av(\pi_2),\dotsc,\Av(\pi_k)\},
\label{eq-indeco1}
\end{equation}
for some permutations $\pi_1,\dotsc,\pi_k$. Suppose furthermore that
$\pi_1$ can be written as $\pi_1=\pi_1'\oplus\pi_1''$. Then
$\Av(\pi)$ also admits the splitting 
\begin{equation}
 \cS'=\{\Av(\pi_1'),\Av(\pi_1''),\Av(\pi_2),\dotsc,
\Av(\pi_k)\}.\label{eq-indeco2}
\end{equation}
\end{lemma}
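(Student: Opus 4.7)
The plan is to exploit the fact that, since $\pi$ is indecomposable, the class $\Av(\pi)$ is closed under direct sums: any occurrence of the indecomposable pattern $\pi$ inside a direct sum $\sigma_1 \oplus \sigma_2$ must be contained in a single summand, so if $\sigma_1,\sigma_2\in\Av(\pi)$ then $\sigma_1\oplus\sigma_2\in\Av(\pi)$. Consequently, for any $\rho \in \Av(\pi)$, the permutation $\rho \oplus \rho$ also belongs to $\Av(\pi)$, and the assumed splitting $\cS$ provides a coloring of $\rho \oplus \rho$ with colors $c_1,\dotsc,c_k$ such that, for each $j$, the $c_j$-subpermutation of $\rho\oplus\rho$ avoids~$\pi_j$.

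Write $\chi_L$ and $\chi_R$ for the restrictions of this coloring to the left and right copies of $\rho$, and let $\sigma_L,\sigma_R\le\rho$ be the corresponding $c_1$-subpermutations of the two copies. Then the $c_1$-subpermutation of $\rho\oplus\rho$ is exactly $\sigma_L\oplus\sigma_R$, and it lies in $\Av(\pi_1)=\Av(\pi_1'\oplus\pi_1'')$. If $\sigma_L$ contained $\pi_1'$ and $\sigma_R$ contained $\pi_1''$, gluing these two occurrences would yield an occurrence of $\pi_1'\oplus\pi_1''=\pi_1$ inside $\sigma_L\oplus\sigma_R$, a contradiction. Hence $\sigma_L\in\Av(\pi_1')$ or $\sigma_R\in\Av(\pi_1'')$.

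In the former case, recolor $\rho$ according to $\chi_L$ and simply rename $c_1$ as $c_1'$, leaving the new class $c_1''$ empty; in the latter case, recolor according to $\chi_R$ and rename $c_1$ as~$c_1''$. For each $j\ge 2$, the $c_j$-part of the chosen single copy is a subpermutation of the $c_j$-part of $\rho\oplus\rho$, so it still avoids~$\pi_j$; and by construction the new $c_1'$- (resp.\ $c_1''$-) class avoids $\pi_1'$ (resp.~$\pi_1''$), while the empty class trivially avoids the other pattern. This produces a coloring of $\rho$ witnessing $\rho\in\Av(\pi_1')\m\Av(\pi_1'')\m\Av(\pi_2)\m\dotsb\m\Av(\pi_k)$, which is the required splitting~$\cS'$. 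The main conceptual hurdle is recognising that one cannot simply split $\Av(\pi_1)$ on its own, since $\Av(\pi_1'\oplus\pi_1'')\not\subseteq\Av(\pi_1')\m\Av(\pi_1'')$ in general (e.g.\ $\Av(2143)\not\subseteq\Av(21)\m\Av(21)=\Av(321)$); the indecomposability of $\pi$ is used precisely to enable the doubling trick $\rho\mapsto\rho\oplus\rho$ that sidesteps this obstacle.
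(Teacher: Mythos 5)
Your proof is correct and follows essentially the same route as the paper: form $\rho\oplus\rho$, use indecomposability of $\pi$ to keep it in $\Av(\pi)$, apply the splitting $\cS$, and observe that the $c_1$-colored parts of the two summands cannot simultaneously contain $\pi_1'$ and $\pi_1''$, so one copy of $\rho$ already witnesses membership in $\cS'$. The concluding remark about why a direct split of $\Av(\pi_1'\oplus\pi_1'')$ fails is accurate and matches the motivation implicit in the paper's argument.
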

\begin{proof}
To show that $\cS'$ is a splitting of $\Av(\pi)$, we will prove the following
stronger statement:
\[
 \Av(\pi)\!\subseteq\!\left(\Av(\pi_1')\m\Av(\pi_2)\m\dotsb\m
\Av(\pi_k)
 \right)\cup\left(\Av(\pi_1'')\m\Av(\pi_2)\m\dotsb\m\Av(\pi_k)\right).
\]

Let $\rho$ be a $\pi$-avoiding permutation, and let $\tau$ be the permutation
$\rho\oplus\rho$. Since $\pi$ is indecomposable, $\tau$
avoids~$\pi$, and therefore it can be colored by $k$ colors $c_1,\dotsc,c_k$ so
that the elements of color $c_i$ avoid~$\pi_i$. The permutation $\tau$ is a
direct sum of two copies of $\rho$; we refer to them as the bottom-left copy and
the top-right copy. We consider the restrictions of the coloring of $\tau$ to
the two copies. If the bottom-left copy contains $\pi_1'$ in color $c_1$ and the
top-right copy contains $\pi_1''$ in color $c_1$, then $\tau$ contains $\pi_1$
in color $c_1$, which is impossible. 

Suppose that the bottom-left copy has no $\pi_1'$ in color~$c_1$. Then the
coloring of the bottom-left copy of $\rho$ demonstrates that
$\rho\in\Av(\pi_1')\m\Av(\pi_2)\m\dotsb\m\Av(\pi_k)$, as claimed. Similarly, if
the top-right copy of $\rho$ avoids $\pi_1''$ in color $c_1$, we see that 
$\rho\in\Av(\pi_1'')\m\Av(\pi_2)\m\dotsb\m\Av(\pi_k)$. This proves the lemma.
\end{proof}
\begin{corollary}
If $\pi$ is indecomposable and the class $\Av(\pi)$ is splittable,
then $\Av(\pi)$ has a splitting of the form $\{\Av(\pi_1),\dotsc,\Av(\pi_k)\}$,
where each $\pi_i$ is an indecomposable $\pi$-avoiding permutation.
\end{corollary}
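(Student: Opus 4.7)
The plan is to iterate Lemma~\ref{lem-indeco} on any principal splitting of $\Av(\pi)$, decomposing decomposable patterns into their direct summands, and to control the process by a descent argument so that the procedure terminates at an all-indecomposable splitting.

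First, since $\Av(\pi)$ is splittable, Lemma~\ref{lem-simple}(d) supplies a nontrivial splitting $\cS = \{\Av(\pi_1),\dotsc,\Av(\pi_k)\}$ into principal classes. I would start by noting that each $\pi_i$ can be assumed to lie in $\Av(\pi)$: the nontriviality of $\cS$ says that $\Av(\pi_i)$ is not a superset of $\Av(\pi)$, so some $\sigma \in \Av(\pi)$ contains $\pi_i$, and heredity of $\Av(\pi)$ then forces $\pi_i \in \Av(\pi)$.

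Next, I would introduce the measure $\Phi(\cS) = \sum_{i=1}^{k}(m_i - 1)$, where $m_i$ is the number of summands in the unique maximal $\oplus$-decomposition of $\pi_i$. If $\Phi(\cS) = 0$, every $\pi_i$ is indecomposable and we are done. Otherwise some $\pi_j$ is decomposable, say $\pi_j = \pi_j' \oplus \pi_j''$ with both summands nonempty. Since $\pi$ is indecomposable by hypothesis, Lemma~\ref{lem-indeco} applies verbatim and replaces $\Av(\pi_j)$ in $\cS$ by the pair $\Av(\pi_j'), \Av(\pi_j'')$ to produce a new splitting $\cS'$ of $\Av(\pi)$. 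The patterns $\pi_j'$ and $\pi_j''$ remain in $\Av(\pi)$ as subpermutations of $\pi_j$, and writing out the effect on $\Phi$ gives $\Phi(\cS') = \Phi(\cS) - 1$, so the process must terminate after finitely many steps.

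There is no real obstacle here, because the substantive content is already contained in Lemma~\ref{lem-indeco}; the corollary is essentially a tidying-up step. The only things to verify carefully are that $\pi$-avoidance of the patterns is preserved at every iteration (immediate from the summand structure) and that the recursion is well-founded (immediate from the strict decrease of $\Phi$). Iterating until $\Phi = 0$ produces the desired splitting $\{\Av(\pi_1),\dotsc,\Av(\pi_k)\}$ in which each $\pi_i$ is an indecomposable $\pi$-avoiding permutation.
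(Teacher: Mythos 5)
Your proof is correct and takes essentially the same route the paper intends: the corollary appears immediately after Lemma~\ref{lem-indeco} with no separate proof, precisely because it follows by starting from the principal splitting supplied by Lemma~\ref{lem-simple}(d) (whose parts automatically avoid $\pi$ and lie in $\Av(\pi)$ by nontriviality and heredity) and iterating Lemma~\ref{lem-indeco} on decomposable parts. Your potential function $\Phi$ merely makes the obvious termination argument explicit, and it works since any two-part $\oplus$-decomposition splits at a boundary of the maximal decomposition, so the total number of indecomposable summands is conserved and $\Phi$ drops by exactly one per step.
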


The rest of this subsection is devoted to the proof of Theorem~\ref{thm-spsp}.
As in the proof of Theorem~\ref{thm-split}, we use the formalism of ordered
matchings. Note that by Observation~\ref{obs-match}, a class $\Av(\pi)$ has a
splitting $\{\Av(\pi_1),\dotsc,\Av(\pi_k)\}$ if and only if the class of
permutation matchings $\ma{\Av(\pi)}$ has a
splitting $\{\ma{\Av(\pi_1)},\dotsc,\ma{\Av(\pi_k)}\}$. On the other hand, by
Lemma~\ref{lem-rpi}, a splitting of $\Av(1\oplus\pi)$ into parts of the form
$\Av(1\oplus\pi_i)$ can be obtained from a splitting of $\MAv(\ma\pi)$ into
parts of the form $\MAv(\ma{\pi_i})$. Therefore, what we need is an argument
showing how a splitting of the class $\ma{\Av(\pi)}$ can be transformed into a
splitting of the larger class $\MAv(\ma\pi)$. For this, we state and prove the
next theorem.

\begin{theorem}\label{thm-match}
Let $\pi$ be a permutation of order $n$. Suppose that the class
$\ma{\Av(\pi)}$ has a splitting
\[
\{\ma{\Av(\pi_1)},\ma{\Av(\pi_2)},\dotsc,\ma{\Av(\pi_k)}\},
\]
where each $\pi_i$ is a nonempty indecomposable permutation.
Then there is a constant $K\le 16^{n}$ such that the class of matchings
$\MAv(\ma\pi)$ has a splitting
\[
\mexp{K}{\{\MAv(\ma\pi_1),\MAv(\ma\pi_2),\dotsc,
\MAv(\ma\pi_k)\}}.
\]
\end{theorem}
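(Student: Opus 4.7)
The plan is to reduce arbitrary matchings in $\MAv(\ma\pi)$ to permutation matchings in $\ma{\Av(\pi)}$ by first partitioning the arcs of a given $N \in \MAv(\ma\pi)$ into at most $K \le 16^n$ structurally simple submatchings, and then applying the hypothesis to each. Since each $\pi_i$ is indecomposable, $\ma{\pi_i}$ is connected, so a matching avoids $\ma{\pi_i}$ if and only if each of its $\uplus$-indecomposable blocks does. Hence a valid splitting of $N$ can be assembled block-by-block, and I may reduce to the case where $N$ is itself a single block.

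The core step is to partition the arcs of $N$ into submatchings $N_1, \dots, N_K$ with $K \le 16^n$, so that each $N_j$ is a $\uplus$-disjoint union of permutation matchings; equivalently, within each block of each $N_j$, no two arcs are in series. The bound $16^n = 4^{2n}$ suggests assigning each arc $\alpha$ a type drawn from a product space indexed by the $2n$ endpoints of $\ma\pi$, recording a constant number of positional relations per endpoint (for instance, whether $\alpha$ is to the left of, to the right of, nests over, or is nested under the ``virtual'' endpoint). Given such a partition, every block $B$ of each $N_j$ is a permutation matching and a submatching of $N$, so $B \in \ma{\Av(\pi)}$; the hypothesis then colors $B$ with $k$ colors whose $i$-th class lies in $\ma{\Av(\pi_i)} \subseteq \MAv(\ma{\pi_i})$. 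Doing this block-by-block in each $N_j$ and invoking the connectedness of $\ma{\pi_i}$ (to rule out occurrences of $\ma{\pi_i}$ that would straddle two blocks of $N_j$) yields $k$ submatchings of $N_j$ with the $i$-th avoiding $\ma{\pi_i}$. Iterating over all $j \in [K]$ produces the desired splitting into $Kk$ parts, with $K$ copies of each $\MAv(\ma{\pi_i})$.

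The hard part is the partition step: constructing an explicit type function with at most $16^n$ types that guarantees same-type arcs never lie in series within a single common block. Because a general matching can contain arbitrarily long chains of pairwise-series arcs without containing $\ma\pi$ (think of nested ``umbrella'' arcs over many short arcs in series), one cannot just apply a Dilworth-style argument to the series order: the construction must genuinely exploit the interplay between $\ma\pi$-avoidance, the block structure of $N$, and the specific combinatorial content assigned to each arc. This is where the factor $16^n$ crystallizes, and this is where the main combinatorial work of the theorem lies.
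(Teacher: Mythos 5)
Your reduction of the theorem to a partition lemma is where the proposal breaks: the claim you defer --- that every $N\in\MAv(\ma\pi)$ can be partitioned into at most $16^n$ submatchings $N_1,\dotsc,N_K$ such that every block of every $N_j$ is a permutation matching --- is not merely the hard part, it is false for any bound depending only on $\pi$. Take $N$ to be the non-crossing matching whose nesting forest is the complete binary tree of depth $d$ (each arc has two arcs in series nested directly below it). Since $N$ has no two crossing arcs, it lies in $\MAv(\ma\pi)$ for every $\pi$ containing the pattern $21$, in particular for $\pi=\J_n$, which is exactly the case needed for the circle-graph application. Now suppose $S$ is a part all of whose blocks are permutation matchings. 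For every arc $b\in S$, the arcs of $S$ nested below $b$ must be pairwise nested: two of them in series would lie, together with $b$, in a single block of $S$, and that block would contain two arcs in series. Hence the part containing the root arc consists, apart from the root, of a chain, which meets only one of the two subtrees; the other subtree --- again a complete binary tree, of depth $d-1$ --- must be covered by the remaining parts. By induction, any such partition needs at least $d+1$ parts, which is unbounded while $n$ stays fixed. So no type function, however clever, can produce the partition, and the difficulty you flag as ``where the main combinatorial work lies'' cannot be overcome in this form.

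The paper avoids demanding this extra structure of the parts. It proves a stronger statement (Lemma~\ref{lem-match}): for every auxiliary matching $M$, the class $\MAv(\{\ma\pi,M\})$ admits the splitting $\mexp{K(M)}{\cS}$ with $K(M)\le 4^{\wei(M)}$, where $\wei(M)$ counts arcs plus long arcs; the theorem follows with $M=\ma\pi$, whence $16^n=4^{2n}$. The induction is on $\wei(M)$. If $M=M_1\uplus M_2$, a leftmost occurrence of $M_1$ in $N$ cuts $N$ into an $M_1$-avoiding left piece, an $M_2$-avoiding right piece, and the straddling arcs, which form a permutation matching --- this is the only place the hypothesis on $\ma{\Av(\pi)}$ is invoked. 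If $M$ is $\uplus$-indecomposable, $N$ (assumed connected, using that the $\ma{\pi_i}$ are connected, as in your plan) is stratified into BFS levels of its intersection graph; each level is split according to whether it is crossed from the left or from the right by the previous level, and the blocks of these two halves are shown to avoid patterns $M^+$, $M^-$ obtained from $M$ by shortening its leftmost, respectively rightmost, long arc, which have smaller weight and close the induction. The parts produced this way merely avoid the patterns $\ma{\pi_i}$; they are not $\uplus$-sums of permutation matchings, and settling for that weaker property is precisely what makes the argument go through.
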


The theorem essentially says that if we can split a $\ma\pi$-avoiding
permutation matching into matchings that avoid some other permutation patterns,
then we can have an analogous splitting also for non-permutation $\ma\pi$-avoiding
matchings. The splitting of non-permutation matchings may need a $K$-fold
increase in the number of parts used. 

The bound $K\le 16^{n}$ can be slightly
improved by a more careful proof, but the particular approach we will use in the
proof seems to always produce a bound of the form $c^n$ for
some~$c$. Getting a subexponential upper bound for $K$ would be a major
achievement, as this would imply, as a special case, a subexponential bound on
the chromatic number of circle graphs avoiding a clique of size~$n$, as we shall
see in Subsection~\ref{ssec-circ}.

Recall that for a matching $M$, an arc $\alpha\in M$ is long
if at least one endpoint of $M$ is nested
below~$\alpha$; otherwise it is short. Let $\lo(M)$
be the number of long arcs of~$M$. Define the \emph{weight} of $M$, denoted by
$\wei(M)$, as $\wei(M)=|M|+\lo(M)$. To prove Theorem~\ref{thm-match}, we will in
fact prove the following more general result.

\begin{lemma}\label{lem-match}
Let $\pi$ be a permutation of order $n$. Suppose that the class
$\ma{\Av(\pi)}$ has a splitting
\[
\{\ma{\Av(\pi_1)},\ma{\Av(\pi_2)},\dotsc,\ma{\Av(\pi_k)}\},
\]
where each $\pi_i$ is a nonempty indecomposable permutation.
Let $M$ be a matching. There is a constant $K\le 4^{\wei(M)}$ such that
$\MAv(\{\ma\pi,M\})$ has a splitting
\[
 \mexp{K}{\{\MAv(\ma\pi_1),\MAv(\ma\pi_2),\dotsc, \MAv(\ma\pi_k)\}}.
\]
\end{lemma}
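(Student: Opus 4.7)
The plan is to prove Lemma~\ref{lem-match} by induction on $\wei(M) = |M| + \lo(M)$.

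For the base case, take $\wei(M) = 0$, meaning $M$ is the empty matching. Since every matching contains the empty matching, the class $\MAv(\{\ma\pi, M\})$ is itself empty, and the conclusion holds trivially with $K = 1 = 4^0$. It is convenient to also verify the small-weight cases as sanity checks: when $\wei(M) = 1$ the class contains only the empty matching, and when $\wei(M) = 2$ the matching $M$ consists of two short arcs in series, so any $N \in \MAv(\{\ma\pi, M\})$ is a permutation matching and the assumed splitting of $\ma{\Av(\pi)}$ applies directly with $K = 1$.

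For the inductive step, suppose $\wei(M) \ge 1$ and that the lemma holds for all matchings of strictly smaller weight. The goal is to show that every $N \in \MAv(\{\ma\pi, M\})$ admits a partition of its arcs into a bounded number of color classes, each of which lies in $\MAv(\{\ma\pi, M'\})$ for some matching $M'$ with $\wei(M') < \wei(M)$. Applying the inductive hypothesis to each color class and combining yields the required splitting of $N$, with the total multiplicity staying within $4^{\wei(M)}$ provided the number of colors used at each step is at most $4^{\wei(M) - \wei(M')}$.

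The coloring is driven by a distinguished arc $\alpha$ of $M$—chosen to be short whenever $M$ contains a short arc, and otherwise an outermost long arc. Setting $M' = M \setminus \{\alpha\}$, we have $\wei(M') \le \wei(M) - 1$ in the short case and $\wei(M') \le \wei(M) - 2$ in the long case (since a long arc contributes both to $|M|$ and to $\lo(M)$). The color of each arc $\beta$ of $N$ records its positional relationship to $\alpha$—whether its endpoints lie within a specific "window" determined by a prescribed embedding slot for $\alpha$, and whether $\beta$ itself could plausibly play the role of $\alpha$ in a partial embedding of $M$ into $N$. The rule is crafted so that if a copy of $M'$ ever lay within a single color class, one could complete it to a copy of $M$ in $N$ using an arc outside that color class, contradicting $N \in \MAv(M)$; hence each color class avoids $M'$.

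I expect the main obstacle to be the precise formulation of the coloring rule when $\alpha$ is a long arc with nontrivial substructure nested below it in $M$. In that case, the "role of $\alpha$" is jointly determined by its two endpoints and by the arcs of $M$ nested below them, so distinguishing the positional relationship of $\beta$ requires tracking several independent binary attributes. A careful case analysis—verifying both that the number of colors used per step stays within the budget $4^{\wei(M) - \wei(M')}$ and that the resulting color classes genuinely avoid $M'$—forms the technical core of the argument, and is the place where the additional constraint $N \in \MAv(\ma\pi)$ may have to be invoked to handle configurations where the obvious extension of $M'$ to $M$ runs through a hypothetical $\ma\pi$-copy rather than a true arc of $N$.
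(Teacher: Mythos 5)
Your overall framework---induction on $\wei(M)$, reducing to a pattern of smaller weight via a bounded coloring of $N$---is the same as the paper's, but the core of your inductive step does not work as stated, for two reasons. First, you set $M'=M\setminus\{\alpha\}$, i.e., you \emph{delete} an arc. Requiring each color class to avoid $M\setminus\{\alpha\}$ is far too strong: already for $M=\ma{21}$ (two crossing arcs), $M'$ is a single arc, so every color class would have to be empty, which is impossible for any nonempty $M$-avoiding $N$ under a bounded coloring. The same obstruction persists for larger $M$: matchings in $\MAv(M)$ typically contain unboundedly many pairwise ``independent'' copies of $M\setminus\{\alpha\}$ that cannot be completed to $M$, so no bounded coloring makes every class $M'$-free. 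The paper instead \emph{shortens} an arc rather than deleting it: $M^+$ is obtained from $\uplus$-indecomposable $M$ by replacing the arc $\gamma=(1,x)$ through the leftmost endpoint by the short arc $(x-0.5,x)$, keeping $|M|$ fixed and dropping $\wei$ by exactly one.

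Second, your sketch gives no mechanism forcing a copy of $M'$ inside a color class to be completable to a copy of $M$ by an arc of $N$; since $N$ avoids $M$ there is no ``prescribed embedding slot for $\alpha$'' to anchor the coloring on, and this completion step is precisely the content of the proof. The paper supplies it with two structural reductions absent from your proposal: (i) a separate case for $\uplus$-decomposable $M=M_1\uplus M_2$, where $N$ is cut at the leftmost position completing a copy of $M_1$ into an $M_1$-avoider, an $M_2$-avoider, and a permutation matching (this is where the hypothesis on $\ma{\Av(\pi)}$ is used); and (ii) for $\uplus$-indecomposable $M$, a BFS layering of the intersection graph of each connected component of $N$, split into even and odd levels and then each level $L_i$ into $L_i^+$ and $L_i^-$ according to whether the parent arc $\nu(\beta)$ crosses $\beta$ from the left or the right. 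The layering yields Claim~\ref{cla-blockx}: for every arc in a block $B$ of $L_i^+$ the parent arc reaches strictly to the left of all of $B$, so a copy of $M^+$ in $B$ together with the parent of the arc playing the role of $(x-0.5,x)$ forms a copy of $M$ in $N$ --- exactly the completion your argument needs but never constructs. These two binary splits per unit decrease of weight are also what produces the $4^{\wei(M)}$ bound; your budget accounting presupposes a coloring rule that is never defined.
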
 
To deduce Theorem~\ref{thm-match} from Lemma~\ref{lem-match}, we simply
choose $M=\ma\pi$ and observe that $\wei(M)\le 2|M|$. 

\begin{proof}[Proof of Lemma~\ref{lem-match}]
We proceed by induction on $\wei(M)$. Let
$\cS$ denote the multiset $\{\MAv(\ma\pi_1),\MAv(\ma\pi_2),\dotsc,
\MAv(\ma\pi_k)\}$, and let $K(M)$ denote the smallest value of $K$ for which
$\mexp{K}{\cS}$ is a splitting of $\MAv(\{\ma\pi,M\})$. We want to prove
that
this value is finite for each~$M$, and in particular, that $K(M)\le
4^{\wei(M)}$. We will assume, without loss of generality, that the endpoints of
$M$ form the set~$[2m]$, where $m$ is the size of $M$.

If $\wei(M)=1$, then $M$ consists of a single arc, so the class
$\MAv(\{\ma\pi,M\})$ only contains the empty matching, and we have $K(M)=1$.
Suppose now that $\wei(M)>1$. We will distinguish several cases, based on the
structure of~$M$. 

Suppose first that $M$ has the form $M_1\uplus M_2$ for some nonempty matchings
$M_1$ and~$M_2$. Note that both $M_1$ and $M_2$ have smaller weight than~$M$.

Choose a matching $N\in\MAv(\{\ma\pi,M\})$ with $q$ arcs. Suppose that $N$
has the point set $[2q]$. Let $N[<\!i]$ be the submatching of $N$
consisting of all those arcs whose both endpoints are smaller than $i$,
and symmetrically, $N[>\!i]$ consists of those arcs whose both endpoints are
larger than~$i$. $N[\le\!i]$ and $N[\ge\! i]$ are defined analogously.

Fix the smallest value of $i$ such that $N[\le\! i]$ contains a copy of~$M_1$.
If no such value exists, it means that $N$ belongs to $\MAv(\{\ma\pi,M_1\})$,
and admits the splitting $\mexp{K(M_1)}{\cS}$ by induction. Suppose then
that $i$ has been fixed. Then $N[>\!i]$ avoids $M_2$, otherwise $N$ would
contain~$M$. We partition $N$ into three disjoint matchings $N[<\!i]$,
$N[>\!i]$, and $N_0=N\setminus (N[<\!i]\cup N[>\!i])$. The first of these
matchings avoids $M_1$, the second avoids $M_2$, and the third one is a
permutation matching. It follows that these matchings admit the splittings
$\mexp{K(M_1)}{\cS}$, $\mexp{K(M_2)}{\cS}$, and $\cS$, respectively, and
hence $N$ admits the splitting
$\mexp{(K(M_1)+K(M_2)+1)}{\cS}$. We conclude that $K(M)\le 2
(4^{\wei(M)-1})+1\le 4^{\wei(M)}$.

Suppose from now on that $M$ is $\uplus$-indecomposable. This part of our
argument is analogous to the proof of Lemma~\ref{lem-split}. Fix a matching
$N\in\MAv(\{\ma\pi,M\})$. Recall that the permutations $\pi_1, \dotsc,
\pi_k$ are indecomposable by assumption, and therefore the matchings
$\ma\pi_i$ are all connected. Therefore, if for some $K$ each connected
component of $N$ admits a splitting $\mexp{K}{\cS}$, then $N$ admits this
splitting as well. We will therefore assume from now on that the matching $N$ is
connected. Recall that $\uint{N}$ is the intersection graph of~$N$.

Let $\alpha_0\in N$ be the arc that contains the leftmost endpoint of~$N$. We
partition the arcs of $N$ into sets $L_0,L_1,L_2,\dotsc$, where an arc $\beta$
belongs to $L_i$ if and only if the shortest path between $\alpha_0$ and $\beta$
in $\uint{N}$ has length~$i$. We refer to the elements of $L_i$ as \emph{arcs of
level $i$}.

Since $\ma\pi_i$ is connected for each $i$, we see that if each level $L_i$
admits a splitting $\mexp{K}{\cS}$, then the union $L_0\cup L_2\cup
L_4\cup\dotsb$
of all the even levels admits this splitting as well, and the same is true for
the union of the odd layers. It then follows that the matching $N$ admits the
splitting~$\mexp{2K}{\cS}$.

It is thus sufficient to find, for each fixed $i$, a splitting of $L_i$ of the
form $\mexp{K}{\cS}$. Since $L_0$ only contains the arc $\alpha_0$, we
focus on
$i\ge 1$. As in the proof of Lemma~\ref{lem-split}, for an arc $\beta\in L_i$,
we let $\nu(\beta)$ be an arbitrary arc of level $i-1$ crossed by $\beta$. We
again partition $L_i$ into two sets $L_i^+$ and $L_i^-$, with
\begin{align*}
 L_i^+&=\{\beta\in L_i\colon \nu(\beta)\ \text{crosses}\ \beta\ \text{from the
left}\}\text{, and}\\
 L_i^-&=\{\beta\in L_i\colon \nu(\beta)\ \text{crosses}\ \beta\ \text{from the
right}\}. 
\end{align*}

We now define two matchings $M^+$ and $M^-$, both of strictly smaller weight
than $M$, and show that the blocks of $L_i^+$ and $L_i^-$ avoid
$M^+$ and $M^-$, respectively.

Let $\gamma$ be the arc of $M$ that contains the leftmost endpoint of~$M$, and
let $x$ be its other endpoint; in other words, we have $\gamma=\{1,x\}$.
Note that $\gamma$ is long, otherwise $M$ would be $\uplus$-decomposable.
Let $M^+$ be the matching obtained from $M$ by replacing the arc $\gamma$ with
the arc $\{x-0.5,x\}$. Since we replaced a long arc $\gamma\in M$ by a short
arc $\{x-0.5,x\}\in M^+$, we see that $\wei(M^+)=\wei(M)-1$.

We define the matching $M^-$ symmetrically. Let $\gamma'$ be the arc of $M$
containing the rightmost endpoint of~$M$, and let $y$ be the left endpoint of
$\gamma'$. We let $M^-$ denote the matching obtained by replacing
$\gamma'$ by the arc~$\{y,y+0.5\}$.

Repeating the arguments of Claim~\ref{cla-blockx} and
Claim~\ref{cla-mplusx} in the proof of Lemma~\ref{lem-split}, 
we again see that every block of $L_i^+$ avoids $M^+$ and every block of
$L_i^-$ avoids~$M^-$. By induction, every block of $L_i^+$ admits the splitting
$\mexp{K(M^+)}{\cS}$, and consequently, the whole matching $L_i^+$ admits
such
splitting as well. Similarly, $L_i^-$ admits the splitting
$\mexp{K(M^-)}{\cS}$.

Consequently, $L_i$ admits the splitting $\mexp{(K(M^+)+K(M^-))}{\cS}$.
This
implies that both $L_0\cup L_2\cup L_4\cup\dotsb$ and $L_1\cup L_3\cup
L_5\cup\dotsb$ admit the splitting $\mexp{(K(M^+)+K(M^-))}{\cS}$. We
conclude
that $N$ has a splitting $\mexp{K}{\cS}$, with $K\le 2(K(M^+)+K(M^-))\le
4^{\wei(M)}$, as claimed. 

This completes the proof of Lemma~\ref{lem-match} and
therefore also of Theorem~\ref{thm-match} and Theorem~\ref{thm-spsp}.
\end{proof}

\subsection{Splittings and $\chi$-boundedness of circle graphs}\label{ssec-circ}
Let $\J_n$ denote the decreasing permutation $n(n-1)\dotsb 1$. It is well known
that any $\J_n$-avoiding permutation can be merged from at most $n-1$ increasing
sequences; in other words, $\Av(\J_n)$ has a splitting
$\mexp{(n-1)}{\{\Av(21)\}}$.
By Theorem~\ref{thm-spsp}, we conclude that
$\Av(1\oplus\J_n)$ has a splitting $\mexp{K(n-1)}{\{\Av(132)\}}$ for $K\le
16^n$.

Let $f(n)$ denote the smallest integer such that $\Av(1\oplus\J_n)$ admits the
splitting $\mexp{f(n)}{\{\Av(132)\}}$. Recall that a \emph{circle graph}
is a
graph that can be obtained as the intersection graph of a matching. A
\emph{proper coloring} of a graph $G$ is the coloring of the vertices of $G$ in
such a way that no two adjacent vertices have the same color. The
\emph{chromatic number} of~$G$ is the smallest number of colors needed for a
proper coloring of~$G$. A graph is \emph{$K_n$-free} if it has no complete
subgraph on $n$ vertices.

\begin{proposition}\label{pro-circ}
Let $k$ and $n$ be integers. Then the class $\Av(1\oplus\J_n)$
admits the splitting $\mexp{k}{\{\Av(132)\}}$ if and
only if every $K_n$-free circle graph can be properly colored by
$k$ colors. In particular, $f(n)$ is equal to the largest chromatic number of
$K_n$-free circle graphs.
\end{proposition}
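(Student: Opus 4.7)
The plan is to unpack both sides via a dictionary: $132 = 1\oplus 21$, so $\Av(132) = \Av(1\oplus 21)$ and by Lemma~\ref{lem-rpi} a permutation $\rho$ avoids $132$ iff $R(\rho)$ is non-crossing (i.e., avoids $\ma{21}$); moreover $\ma{\J_n}$ is just $n$ pairwise crossing arcs, so a matching $M$ avoids $\ma{\J_n}$ iff the intersection graph $\uint{M}$ is $K_n$-free. The easier direction now falls out of Lemma~\ref{lem-rpi}: if every $K_n$-free circle graph is properly $k$-colorable, then for any $\pi\in\Av(1\oplus\J_n)$ the matching $R(\pi)$ avoids $\ma{\J_n}$ and its intersection graph admits a proper $k$-coloring, partitioning $R(\pi)$ into $k$ non-crossing submatchings, i.e., $R(\pi)\in\mexp{k}{\MAv(\ma{21})}$; the second assertion of Lemma~\ref{lem-rpi} then yields $\pi\in\mexp{k}{\Av(132)}$.

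For the converse, fix a $K_n$-free circle graph $G$ with matching representation $M\in\MAv(\ma{\J_n})$, and aim to read off a proper $k$-coloring of $\uint{M}$ from the hypothesis $\Av(1\oplus\J_n)\subseteq\mexp{k}{\Av(132)}$. The first idea is to extend $M$ to an envelope matching by inserting short arcs at every adjacent ``left-then-right'' endpoint pair (Lemma~\ref{lem-envelope}) and to let $\rho$ be the associated permutation, so that $R(\rho)=M$ and $\rho\in\Av(1\oplus\J_n)$. Any $k$-merging $\rho=\rho_1\cup\cdots\cup\rho_k$ with $\rho_i\in\Av(132)$ then colors each arc of $M$ by the color of its corresponding covered element. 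Properness of this coloring would follow from the standard $132$-obstruction: two crossing arcs $\alpha,\beta\in M$ share, by Observation~\ref{obs-ematch}, a common LR-minimum ancestor $z$ in $\rho$, and placing $z$ in the same class $\rho_i$ as $\alpha,\beta$ produces a $132$-pattern in $\rho_i$, contradicting $\rho_i\in\Av(132)$.

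The main obstacle is that an arbitrary valid merging is free to put the ancestor $z$ in a class different from the one containing $\alpha,\beta$, and when $z\notin\rho_i$ the $132$-argument collapses. To pin $z$ down, I would strengthen the construction of $\rho$ along the lines of Lemma~\ref{lem-perm}: into each gap of $M$ where a right endpoint is immediately followed by a left endpoint, insert a copy of the auxiliary matching $M'$ described there---a modification of $\ma{\J_n}$ whose two central arcs have been swapped so as to untangle. The enlarged matching still avoids $\ma{\J_n}$, and the rigid $M'$-gadgets, by arguments in the spirit of Claims~\ref{cla-blockx} and~\ref{cla-mplusx} from the proof of Lemma~\ref{lem-split}, should obstruct any merging that separates $z$ from the pair $\alpha,\beta$ while keeping every part in $\Av(132)$. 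Once this rigidity is established the induced coloring is proper, and the ``$f(n)$'' statement is the extremal specialization of the equivalence.
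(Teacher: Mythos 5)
Your forward direction is correct and matches the paper: translate a proper $k$-coloring of the intersection graph of $R(\pi)$ into a partition of $R(\pi)$ into $k$ crossing-free (i.e.\ $\ma{21}$-avoiding) submatchings and apply Lemma~\ref{lem-rpi}. The converse, however, has a genuine gap, and you have correctly located it yourself: nothing forces a valid merging to place the common LR-minimum ancestor $z$ of two crossing arcs into the same part as the elements corresponding to those arcs, and your proposed repair does not close this hole. The $M'$-gadgets of Lemma~\ref{lem-perm} are designed for an entirely different purpose (preventing a \emph{tangling} from destroying a prescribed copy of $N$ when passing to a superpattern); inserting them into the gaps of $M$ changes which patterns the resulting permutation contains, but it imposes no constraint whatsoever on how a merging distributes colors, so there is no mechanism by which "rigidity" would obstruct a merging that separates $z$ from $\alpha,\beta$. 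Indeed, one should not expect to force $z$ into any particular part: a single LR-minimum simply cannot simultaneously accompany all the pairwise-crossing pairs it covers.

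The paper's resolution is different and is the one idea missing from your sketch: instead of trying to control the color of $z$, make $z$ carry \emph{every} color. One first reduces to $k=f(n)$, which guarantees the existence of an extremal permutation $\rho\in\Av(1\oplus\J_n)$ that does not admit the splitting $\mexp{(k-1)}{\{\Av(132)\}}$. Taking $\sigma$ with $R(\sigma)=M$, one inflates every LR-minimum of $\sigma$ by a copy of this $\rho$; the resulting permutation $\pi$ still avoids $1\oplus\J_n$, so it admits a $k$-coloring with no monochromatic $132$, and in that coloring every inflated block must use all $k$ colors (otherwise $\rho$ would split into $k-1$ parts). Consequently, for any two crossing arcs $\alpha[u],\alpha[v]$ the block inflating their common covering LR-minimum contains an element of whichever color $\pi[u]$ receives, and that element together with $\pi[u],\pi[v]$ would form a monochromatic $132$ if $\pi[v]$ had the same color; hence the induced vertex coloring of $G$ is proper. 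You would need to incorporate this inflation-by-an-extremal-pattern device (or an equivalent one) to complete your argument.
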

\begin{proof}
Let $M$ be a matching, and let $G$ be its intersection graph. Note that $G$ is
$K_n$-free if and only if $M$ has no $n$ pairwise crossing arcs, i.e., $M$ is 
$\ma{\J_n}$-avoiding. Note also that $G$ can be properly $k$-colored if and
only if $M$ admits the splitting $\mexp{k}{\{\MAv(\ma{\J_2})\}}$. Thus, if
every
$K_n$-free circle graph can be properly $k$-colored, then $\MAv(\ma{\J_n})$
admits the splitting $\mexp{k}{\{\MAv(\ma{\J_2})\}}$, and by
Lemma~\ref{lem-rpi},
$\Av(1\oplus\J_n)$ admits the splitting $\mexp{k}{\{\Av(132)\}}$.

Conversely, assume that $\Av(1\oplus\J_n)$ admits the splitting
$\mexp{k}{\{\Av(132)\}}$. We will show that every $K_n$-free circle graph
can be
properly $k$-colored. It is enough to show this for the smallest possible value
of $k$, i.e., for $k=f(n)$. In particular, we may assume that there is a
permutation $\rho\in\Av(1\oplus\J_n)$ that does not admit the splitting
$\mexp{(k-1)}{\{\Av(132)\}}$.

Let $G$ be a $K_n$-free circle graph, and suppose that $G$ is the intersection
graph of a matching~$M$. Let $\sigma$ be a permutation whose reduced envelope
matching is~$M$. By Lemma~\ref{lem-rpi}, $\sigma$ is $(1\oplus \J_n)$-avoiding.
Let $\pi$ be the permutation obtained by simultaneously inflating each
LR-minimum of $\sigma$ by a copy of~$\rho$. Note that $\pi$ is also
$(1\oplus\J_n)$-avoiding. Therefore $\pi$ has a coloring $c$ by $k$ colors with
no monochromatic copy of 132. Every vertex $v$ of $G$ corresponds to an arc of
$M$, which will be denoted by~$\alpha[v]$. In turn, the arc $\alpha[v]$
corresponds to a covered element $\sigma[v]$ of $\sigma$, and this corresponds
to an element $\pi[v]$ of~$\pi$. Let us define a coloring $c'$ of the vertices
of~$G$ by putting $c'(v)=c(\pi[v])$.

We claim that $c'$ is a proper coloring. To see this, pick two adjacent
vertices $u$ and $v$ of~$G$. Then $\alpha[u]$ and $\alpha[v]$ are two crossing
arcs in $M$, and $\sigma[u]$ and $\sigma[v]$ are two covered elements of
$\sigma$ with the property that $\sigma$ has an LR-minimum $\sigma(i)$ that
covers both $\sigma[u]$ and~$\sigma[v]$. 

Note that in every occurrence of $\rho$ in $\pi$, the coloring $c$ must use all
$k$ colors, because $\rho$ does not admit the splitting
$\mexp{(k-1)}{\{\Av(132)\}}$. In particular, the copy of $\rho$ formed by
inflating $\sigma(i)$ must contain all $k$ colors. Consequently, $\pi[u]$ and
$\pi[v]$ must have distinct colors in order to avoid a monochromatic copy of
132. Thus, $c'(u)\neq c'(v)$, showing that $c'$ is a proper coloring.
\end{proof}

The problem of estimating the value of $f(n)$, i.e., the largest chromatic
number of $K_n$-free circle graphs, has been studied by graph theorists since
the 1980s. Gyárfás~\cite{gyar,gyarcor} was the first to prove that this
chromatic number is bounded, and showed that $f(n)\le n^2 2^n(2^n-2)$. This has
been later improved by Kostochka and Kratochvíl~\cite{koskra}, who proved that
$f(n)\le 50\cdot2^n-O(n)$. They in fact showed that this bound is also
applicable to the so-called polygon-circle graphs, which are intersection graphs
of sets of polygons inscribed into a common circle, and are easily seen to be a
generalization of circle graphs. Currently, the best known upper bound for
$f(n)$ is due to Černý~\cite{Cerny}, who proved that $f(n)\le 21\cdot
2^n-O(n)$. All these bounds are still far away from the best known general
lower bound $f(n)\ge\Omega(n\log n)$, proven by Kostochka~\cite{koslow}.

For specific values of $n$, better estimates are known. For instance, $f(3)=5$,
as shown by Kostochka~\cite{kostri}, who proved that $f(3)\le 5$, and
Ageev~\cite{ageev}, who constructed an example of $K_3$-free 5-chromatic circle
graph on 220 vertices. Recently, Nenashev~\cite{nena} has also shown that
$f(4)\le 30$. 

In general, the problem of estimating $f(n)$ appears rather challenging,  and
one might therefore expect that the more general problem of estimating the
value of $K$ will be hard as well. On the other hand, the previously unknown
connection between circle graphs and permutations might lead to new useful
insights.

\section{Open problems and further directions}

We have seen that the class $\Av(\sigma)$ is splittable when $\sigma$ is
decomposable (up to small exceptions), and unsplittable when $\sigma$
is a simple permutation. Obviously, the most natural open problem is to extend
this study to the remaining permutation patterns.

\begin{problem}
 For which pattern $\sigma$ is the class $\Av(\sigma)$ splittable?
\end{problem}

The number of simple permutations of order $n$ is asymptotically
$\frac{n!}{e^2}(1-\frac{4}{n}+O(n^{-2}))$, as shown by Albert, Atkinson and
Klazar~\cite[Theorem 5]{aak} (see also \cite[sequence A111111]{sloane}). 
The number of decomposable permutations of order $n$ is easily seen to
be $n!(\frac{2}{n}+O(n^{-2}))$ (see \cite[sequence A003319]{sloane}). Thus both
these classes form a significant proportion of all permutations, and one might
wonder what splittability behavior we should expect for $\Av(\sigma)$ when
$\sigma$ is a `typical' pattern of large size. We may phrase this formally
as follows.

\begin{problem}
 What is the asymptotic probability that $\Av(\sigma)$ is splittable, assuming
that $\sigma$ is chosen uniformly at random among the permutations of order~$n$?
\end{problem}

For certain permutation classes, e.g. for $\Av(1342)$, our results imply
splittability, but only provide splittings whose parts are defined by avoidance
of rather large patterns. We may hope that this is an artifact of our proof
technique, and that these classes admit more `natural' splittings. It is
possible to show that every permutation of order at most 3 is unavoidable for
$\Av(1342)$, as is every layered permutation or a complement of a layered
permutation. On the other hand, it is not clear whether e.g. the permutations
1423 or 2413 are unavoidable in $\Av(1342)$.

\begin{problem} Which permutations are unavoidable in the class $\Av(1342)$?
Is, e.g., $1423$ or $2413$ unavoidable in $\Av(1342)$? Does the class
$\Av(1342)$ even admit a splitting with all parts of the form $\Av(1423)$
or~$\Av(2413)$?
\end{problem}

We showed in Lemma~\ref{lem-ama} that an unsplittable class of
permutations is necessarily 1-amalgamable. This offers a simple approach to
prove splittability of a class $C$ by exhibiting an example of permutations
$\pi,\sigma\in C$ that fail to amalgamate in~$C$.  Although we never used
such approach in the present paper, we believe that amalgamability is a concept
worth exploring.

For some splittable classes, it is easy to see that they fail to be
1-amalgamable: e.g., a class of the form $\Av(\rho\oplus1\oplus\tau)$
has no amalgamation identifying the leftmost element of $\rho\oplus 1$ with the
leftmost element of $1\oplus\tau$, showing that the class 
$\Av(\rho\oplus1\oplus\tau)$ is not 1-amalgamable.
However, for more general splittable classes, there does not seem to be any
such obvious argument.

\begin{problem}
Is there an example of a splittable permutation class that is 1-amalgamable?
\end{problem}

We may also ask about higher-order amalgamations or Ramsey properties. As we
pointed out in the introduction, by results of Cameron~\cite{cam_per} and
B\"ottcher and Foniok~\cite{bofo}, any 3-amalgamable permutation class is 
amalgamable and Ramsey. It is not hard to verify that some classes, e.g.
the class $\Av(132)$, or the class $\Av(2413,3142)$ of separable
permutations, are 2-amalgamable but not 3-amalgamable. Beyond that, we do not
know much about 2-amalgamable classes.

\begin{problem}
Which permutation classes are 2-amalgamable? Are there infinitely many of them?
\end{problem}

\section{Acknowledgements}

We are grateful to Martin Klazar and Anders Claesson for valuable
discussions and helpful comments. 

\bibliographystyle{plain}

\end{document}